\newtheorem{theorem}{Theorem}[section]
\newtheorem{lemma}[theorem]{Lemma}
\newtheorem{corollary}[theorem]{Corollary}
\newtheorem{proposition}[theorem]{Proposition}
\newtheorem{example}[theorem]{Example}
\newenvironment{proof}
{\par\addvspace{0.3cm}\noindent{\rm Proof. }}
{\nopagebreak\mbox{}\hfill $\Box$\par\addvspace{0.25cm}}
\renewcommand{\Re}{\mbox{\rm Re\,}}
\renewcommand{\Im}{\mbox{\rm Im\,}}
\newcommand{\diag}{\mbox{\rm diag\,}}
\newcommand{\im}{\mbox{\rm im\,}}
\newcommand{\wind}{\mbox{\rm wind\,}}
\newcommand{\ind}{\mbox{\rm ind\,}}
\newcommand{\R}{{\mathbb R}}
\newcommand{\Rp}{{\mathbb R}_+}
\newcommand{\C}{{\mathbb C}}
\newcommand{\Z}{{\mathbb Z}}
\newcommand{\T}{{\mathbb T}}
\renewcommand{\kappa}{\varkappa}
\newcommand{\qed}{\hfill $\Box$}
\newcommand{\be}{\begin{equation}}
\newcommand{\ee}{\end{equation}}
\newcommand{\ds}{\displaystyle}
\newcommand{\bq}{\begin{eqnarray}}
\newcommand{\eq}{\end{eqnarray}}
\newcommand{\nn}{\nonumber}
\newcommand{\ba}{\begin{array}}
\newcommand{\ea}{\end{array}}
\newcommand{\bt}{\bar{\tau}}
\newcommand{\iv}{^{-1}}
\newcommand{\iy}{\infty}
\newcommand{\ovl}{\overline}
\newcommand{\Hp}{H^p(\T)}
\newcommand{\Hq}{H^q(\T)}
\newcommand{\Lp}{L^p(\T)}
\newcommand{\cL}{{\cal L}}
\newcommand{\eps}{{\varepsilon}}
\newcommand{\wh}{\widehat}
\newcommand{\cM}{\mathcal{M}}
\newcommand{\cS}{\mathcal{S}}
\newcommand{\cA}{\mathcal{A}}
\newcommand{\cB}{\mathcal{B}}
\newcommand{\cP}{\mathcal{P}}
\newcommand{\ta}{\tilde{a}}
\newcommand{\tb}{\tilde{b}}
\newcommand{\tc}{\tilde{c}}
\newcommand{\td}{\tilde{d}}
\newcommand{\xR}{\overline{\R}}
\newcommand{\tf}{\tilde{f}}
\begin{document}

\date{}
\title{Fredholm and invertibility theory for a special class of Toeplitz + Hankel operators}
\author{Estelle L. Basor\thanks{ebasor@aimath.org}\\
               American Institute of Mathematics\\
              Palo Alto, CA  94306, USA
        \and
        Torsten Ehrhardt\thanks{tehrhard@ucsc.edu, supported in part by NSF Grant DMS-0901434}\\
        Department of Mathematics\\
        University of California\\
       Santa Cruz, CA 95064, USA}
\maketitle

\begin{abstract}
We develop a complete Fredholm and invertibility theory for
Toeplitz+Hankel operators $T(a)+H(b)$ on the Hardy space $\Hp$, $1<p<\iy$, with piecewise continuous functions
$a,b$ defined on the unit circle which are subject to the condition $a(t)a(t\iv)=b(t)b(t\iv)$, $|t|=1$.
In particular, in the case of Fredholmness, formulas for the defect numbers are established. The results are applied to
several important examples. 

\end{abstract}


\section{Introduction}
\label{s1}

Let $\T=\{\,z\in\C\,:\,|z|=1\,\}$ stand for the unit circle in the complex plane, and let 
$L^p(\T)$ stand for the usual space of Lebesgue measurable $p$-integrable functions defined on $\T$,
where $1\le p\le \iy$. For a function $f\in L^1(\T)$ we define the Fourier coefficients by
\bq
f_n &=& \frac{1}{2\pi} \int_0^{2\pi} f(e^{i\theta})\, e^{-i\theta}\, d\theta,\qquad n\in\Z.
\eq
The Hardy spaces $H^p(\T)$ and $\overline{H^p(\T)}$ are defined by
\bq
H^p(\T) &=& \left\{ \, f\in L^p(\T)\,:\, f_n=0 \mbox{ for all } n<0 \right\},\\
\overline{H^p(\T)} &=& \left\{ \, f\in L^p(\T)\,:\, f_n=0 \mbox{ for all } n>0 \right\},
\eq
where $1\le p \le \iy$. For $1<p<\iy$, the Riesz projection 
\bq
P: \sum_{n\in\Z} f_n t^n \in L^p(\T) \mapsto \sum_{n\ge0} f_n t^n \in L^p(\T)
\eq
is bounded and has image $H^p(\T)$. We also introduce the flip operator
\be
J:f(t)\mapsto t\iv f(t\iv),\qquad t\in\T,
\ee
acting on $L^p(\T)$, and for given $a\in L^\iy(\T)$, the (bounded)
multiplication operator
\bq\label{La}
L(a): f(t)\in L^p(\T)\mapsto a(t) f(t)\in L^p(\T).
\eq
Given these operators, we can define the Toeplitz and Hankel operator with generating function
$a\in L^\iy(\T)$ as compressions of operators acting on $L^p(\T)$ to operators acting $H^p(\T)$,
$1<p<\iy$,
\be
T(a)=PL(a)P|_{H^p(\T)},\qquad 
H(a)=PL(a)JP|_{H^p(\T)}.
\ee
With respect to the standard basis $\{t^n\}_{n=0}^\iy$, these operators have the 
matrix representations
$$
T(a)\cong (a_{j-k}),\qquad H(a)\cong (a_{j+k+1}),\qquad j,k\ge 0,
$$
given in terms of the Fourier coefficients of the symbol $a$. Let us at this point recall the important 
relations between Toeplitz and Hankel operators,
\bq\label{Tab}
T(ab)  &=& T(a)T(b) +H(a) H(\tb),\\
H(ab) &=& T(a)H(b)+ H(a) T(\tb).\label{Hab}
\eq
Therein and through out the paper, we will use the ``tilde''-notation
\bq\label{tilde}
\tb(t)=b(t\iv), \qquad t\in \T.
\eq
In terms of the Fourier coefficient this means that $\tb_n=b_{-n}$.
We will denote the upper half of the unit circle by
\be\label{Tp}
\T_+=\{ t\in \T\,:\, \Im(t)>0 \}.
\ee
The set of piecewise continuous functions on $\T$ is denoted by $PC$.

The goal of this paper is to establish an explicit Fredholm and invertibility theory for operators
of the form
\bq
T(a)+H(b)
\eq 
with $a,b\in PC$ under the assumption 
\be\label{cond}
a \ta= b\tb .
\ee
In particular, we are going to establish formulas for the computation of the defect numbers.
The results on the defect number will even go beyond the piecewise continuous class assuming only the factorability of certain auxiliary functions (see Sect.~\ref{s4}). In this sense, this paper is a generalization of \cite{BE2} (see also \cite{BE1,BE3}), where the focus were Toeplitz+Hankel operators
$T(a)+H(a)$, i.e., $a=b$.

Recall that an operator $A$ acting on a Banach space $X$ is called Fredholm if its image
is closed and if the defect numbers
$$
\alpha(A)=\ker A,\qquad \beta(A)=\dim(X/\mathrm{im}(A))
$$
are finite. In this case, the number $\alpha(A)-\beta(A)$ is called the Fredholm index of $A$.
For more information about the notion of Fredholmness we refer to \cite{GKre}.

For pure Toeplitz operators $T(a)$ (as well as for the related class of singular integral operators and Wiener-Hopf operators),
the Fredholm and invertibility theory is by now classic. For its long history we refer to the monographs \cite{BS2, BS1, D, GK}
and the references therein. For piecewise continuous symbols an explicit Fredholm and invertibility theory is well established.
Moreover, for general $L^\iy$-symbols the Fredholm and invertibility theory is related to the notion of
Wiener-Hopf factorization (see \cite{CG, LS} and the papers \cite{S, W}).

The situation is more complicated for Toeplitz+Hankel operators $T(a)+H(b)$ (as well as for singular integral operators with flip and Wiener-Hopf-Hankel operators). For piecewise continuous symbols, explicit criteria for the Fredholmness are in principle known (or, more, precisely,
can be derived from known result in a straightforward way). Their basis is a description of the Banach algebra of bounded linear operators on $\Hp$ generated by Toeplitz and Hankel operators with piecewise continuous symbols. It was first given by Power \cite{P} in the case $p=2$ (see also \cite[Sec.~4.95-102]{BS1}). For general $1<p<\iy$, a corresponding result was established by Roch and Silbermann \cite[Sec.~9]{RS}. 
The Roch-Silbermann result is based on localization principles and on Mellin convolution techniques.
Let us mention that  corresponding results for Toeplitz+Hankel operators on $\ell^p$-spaces can be found in \cite{RS2}, and for Wiener-Hopf-Hankel operators of $L^p(\R_+)$ in \cite{RSS}. The paper \cite{RS2} also establishes a formula for the Fredholm index. 
For more information about the theory of singular integral operators with flip (which has been advanced by many people) we refer to \cite{KS, KL, L} and the references therein.

Given the current status of the theory of Toeplitz + Hankel operators, the challenge now is the computation of the defect numbers (which is necessary to obtain an invertibility criteria).
In previous publications \cite{BE2,BE3} the authors have developed a theory for special
Toeplitz + Hankel operators $T(a)+H(a)$ (i.e., $a=b$). Moreover, one of the authors showed in \cite[Thm.~6.29]{E2} (see also \cite{E1} for the case of continuous symbols) that $T(a)+H(b)$
with $a,b\in L^\iy(\T)$ is Fredholm if and only if the function
$$
\Phi=\left(\ba{cc} b \ta\iv & a-b\ta\iv \tb \\ \ta\iv & -\ta\iv \tb \ea\right)
$$
admits a certain type of antisymmetric factorization. Moreover, the defect numbers can be computed from the partial indices. While this clarifies the connection between invertibility and factorization theory, the caveat is that for matrix symbols (such as $\Phi$) the factorization and the computation of the partial indices is a problem that is believed not to have an explicit solution in general. Hence, for arbitrary $a,b$ (even if they are continuous) there is little hope to get a truely explicit invertibility criteria for $T(a)+H(b)$.

On the other hand, if the matrix function $\Phi$ is triangular with piecewise continuous entries, then 
there exist a explicit factorization procedures \cite{CG, LS}, which would provide a way to determine the defect numbers of $T(a)+H(b)$.
Notice that the condition $a\ta=b\tb$ makes $\Phi$ triangular. This idea has been pursued in the special case of 
$T(a)+H(t\iv a)$, i.e., $b(t)=t\iv a(t)$, in \cite[Thm.~6.33]{E2}, which relies on \cite[Thm.~6.29]{E2}. The proofs of both theorems are quite involved. For this reason we will not follow this strategy. Instead we will give in this paper a more self-contained derivation, which follows more closely our previous publications
\cite{BE2, BE3}. Therein we encountered the factorization of a scalar symbol (rather than a matrix symbol).

The triangular form of $\Phi$ gives one motivation for considering the condition $a\ta=b\tb$.
Another motivation is that the case of  $a\ta=b\tb$ has several important subclasses. Besides the already mentioned operators $T(a)+H(a)$ and the related $T(a)-H(a)$ (which can be reduced to the first case), one also encounters
$$
T(a) - H(t\iv a) \quad \mbox{ and }\quad  T(a)+H(t a).
$$
The invertibility theory of these four classes of operators turns out to be particularly simple. Indeed, in case of smooth symbols sufficient criteria have been established in \cite{BE4} (see Prop.~3.1 and 4.4 therein). 
It should be mentioned that the determinants of the finite sections of the above four classes of operators are important in the study of random matrix ensembles (see \cite{BCE,BE4} for more information).

Another class of importance is that of operators $I+H(\psi)$ where $\psi$ satisfies $\psi\tilde{\psi}=1$.
For very particular symbols $\psi$, the invertibility of $I+H(\psi)$ has already been proved ($p=2$) and 
was needed for other results \cite{BE5,E3,E4,E5}, which were related to the asymptotics of determinants and random matrix theory.

An outline of the paper is as follows:
In Section \ref{s2} we will first recall the (more general) results of \cite{RS} and then specialize in order to
derive a Fredholm criteria for $T(a)+H(b)$ with $a\ta=b\tb$, $a,b\in PC$. In the Fredholm criteria we will encounter two auxiliary functions $c=a/b=\tb/\ta$ and $d=\ta/b=\tb/a$. Furthermore, the Fredholm index
of $T(a)+H(b)$ can be expressed in terms of winding numbers of curves generated by the images of $c$ and $d$. The Fredholm criteria and the index formula depend of course on the underlying space $H^p(\T)$.

In Section \ref{s3} we show that assuming Fredholmness of $T(a)+H(b)$ and $a,b\in PC$, the functions $c,d$ admit
a certain kind of Wiener-Hopf factorization (which is antisymmetric in the sense of \cite{BE3, E2}).
One could ask whether this kind of result can be extended beyond the piecewise continuous class.
That this is not the case will be shown by a simple example at the end of the section.

In Section \ref{s4} we are going to establish formulas for the defect numbers of $T(a)+H(b)$.
This will be done under the following assumptions: $T(a)+H(b)$ is Fredholm (where $a,b\in L^\iy(\T)$ 
and $a\ta=b\tb$) and the auxiliary functions $c,d$ admit antisymmetric factorizations. In these factorizations there occur indices $n,m\in\Z$ which will determine the defect numbers.
In some cases ($n,m>0$), the formulas will also depend on the factors of the factorizations.
As a corollary, an invertibility criteria is derived.
Making the connection back to the piecewise continuous case, the existence of the factorizations of $c,d$ (which was shown in Sect.~\ref{s3}) implies that the formulas for the defect numbers are always applicable. Moreover, the indices $n,m$ relate to the winding numbers associated 
with $c$ and $d$.

Finally, in Section \ref{s5} we will focus on special cases. First we consider the four classes of operators $T(a)\pm H(a)$, $T(a)-H(t\iv a)$, and $T(a)+H(ta)$. The nice thing about these cases is that the factors in the factorization will never play a role and that in addition only one index (or winding number) occurs. As a second illustration we will consider operators $I+H(\psi)$ with $\psi\in PC$, $\psi\tilde{\psi}=1$. Here the criteria are slightly more complicated.


\section{Fredholm theory for piecewise continuous functions}
\label{s2}

As already pointed out in the introduction, the goal of this section is to state explicit 
criteria for the Fredholmness of $T(a)+H(b)$ on $\Hp$ in the case $a\ta=b\tb$, $a,b\in PC$.
In addition, we will derive a formula for the Fredholm index. 

The Fredholm criteria is a straightforward consequence of results in \cite{RS}, which we are going to 
state first. For this purpose we have to introduce some notations.
Throughout this paper we will assume that $1<p<\iy$ and $1/p+1/q=1$.
Recall that the Mellin transform $\cM:L^p(\Rp)\to L^p(\R)$ is defined by
\bq
(\cM f)(x) &=& \int_0^\iy \xi^{-1+1/p-ix}f(\xi)\,d\xi,\quad x\in\R,
\eq
and that for $\phi\in L^\iy(\R)$ the Mellin convolution operator
$M^0(\phi)\in\cL(L^p(\Rp))$ is given by
\bq
M^0(\phi)f &=& \cM\iv(\phi(\cM f)).
\eq
Let $S$ denote the singular integral operator acting on the space $L^p(\Rp)$,
\bq
(Sf)(x) &=& \frac{1}{\pi i}\int_0^\iy \frac{f(y)}{y-x}\,dy,\quad x\in\Rp,
\eq
where the singular integral has to be understood as the Cauchy principal value,
and let $N$ stand for the Hankel-type integral operator acting on $L^p(\Rp)$ by
\bq
(Nf)(x) &=& \frac{1}{\pi i}\int_0^\iy \frac{f(y)}{y+x}\,dy,\quad x\in\Rp.
\eq
It is well known that $S$ and $N$ can be expressed as Mellin convolution
operators
\be
S=M^0(s), \qquad N=M^0(n)
\ee
with the generating functions
\be
s(x)=\coth((x+i/p)\pi),\quad
n(x)=(\sinh((x+i/p)\pi))\iv ,\quad x\in\R
\ee
(see, e.g., \cite[Sec.~2]{RS} or \cite[Chap.~2]{HRS}). Notice that $s(x)$ and $n(x)$ are continuous
on $\R$ and possess the limits $s(\pm\iy)=\pm1$ and $n(\pm\iy)=0$ for
$x\to\pm\iy$. Moreover, $s^2-n^2=1$.

Let $S_\T$ stand for the singular integral operator acting on $L^p(\T)$,
\bq
(S_\T f)(t) &=& \frac{1}{\pi i} \int_\T \frac{f(s)}{s-t}ds, \quad t\in\T,
\eq
and let $W$ stand for the ``flip'' operator 
\bq
(W f)(t) &=& f(t\iv)
\eq
acting on $L^p(\T)$.  For $\phi\in PC$ the one-sided limits are denoted by
\bq
\phi^\pm(t) &=& \lim\limits_{\varepsilon\to\pm0}\phi(te^{i\varepsilon}), \qquad t\in\T.
\eq
As before let $L(\phi)$ stand for the multiplication operator (\ref{La})
on $L^p(\T)$ with the generating function $\phi$,  and let $\T_+$ stand for the upper half of the unit circle
(\ref{Tp}).

The smallest closed subalgebra of $\cL(L^p(\T))$, which contains the operators
$S$, $W$ and $L(\phi)$ for $\phi\in PC$ will be denoted by
$\cS^p(PC)$.
Specializing Theorem 9.1 of \cite{RS} to the situation where we are 
interested in, we obtain the following result. 

\pagebreak[2]
\begin{theorem}[Roch, Silbermann]
Let $1<p<\iy$. 
\label{t2.1}\
\begin{itemize}
\item[(a)]
For $\tau\in\{-1,1\}$, there exists a homomorphism 
$H_\tau:\cS^p(PC)\to \cL((L^p(\T))^2)$, which acts on the generating 
elements as follows:
\bq
H_\tau (S_\T) &=& 
\left(\ba{cc} S&-N\\ N&-S\ea\right),\qquad
H_\tau (W) \;\;=\;\;
\left(\ba{cc} 0&I\\ I&0\ea\right),
\nn\\[2ex]
H_\tau (L(\phi))  &=& 
\diag(\phi^+(\tau)I,\phi^-(\tau)I).\nn
\eq
\item[(b)]
For $\tau\in\T_+$, there exists a homomorphism 
$H_\tau:\cS^p(PC)\to \cL((L^p(\T))^4)$, which acts on the generating 
elements as follows:
\bq
H_\tau (S_\T) &=& 
\left(\ba{cccc} S&-N&0&0\\ N&-S&0&0\\ 0&0&S&-N\\0&0&N&-S\ea\right),\qquad
H_\tau (W) \;\;=\;\;
\left(\ba{cccc} 0&0&0&I\\ 0&0&I&0\\ 0&I&0&0\\ I&0&0&0\ea\right),
\nn\\[2ex]
H_\tau (L(\phi))  &=& 
\diag(\phi^+(\tau)I,\phi^-(\tau)I,\phi^+(\bar{\tau})I,\phi^-(\bar{\tau})I).\nn
\eq
\item[(c)]
An operator $A\in \cS^p(PC)$ is Fredholm if and only if the operators
$H_\tau (A)$ are invertible for all $\tau\in\{-1,1\}\cup\T_+$.
\end{itemize}
\end{theorem}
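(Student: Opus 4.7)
The theorem is a specialization of Roch--Silbermann's Theorem 9.1 in \cite{RS}, so my plan is to explain how I would recover it from their general framework rather than re-derive everything from scratch. The heart of the matter is a two-step localization of $\cS^p(PC)$ modulo compact operators.

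\medskip

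First I would verify that $\cS^p(PC)$ contains the ideal $\mathcal{K}$ of compact operators and pass to the Calkin-type quotient $\cS^p(PC)/\mathcal{K}$, so that Fredholmness of $A\in\cS^p(PC)$ is equivalent to invertibility of its coset. The center of this quotient contains the classes of $L(\phi)$ for $\phi\in C(\T)$, but the key observation for handling the flip is that the relevant central subalgebra should consist of \emph{symmetric} continuous multipliers, i.e.\ those $\phi\in C(\T)$ satisfying $\phi(t)=\phi(t\iv)$, so that $WL(\phi)=L(\phi)W$. The maximal ideal space of this symmetric subalgebra is the quotient of $\T$ under $t\sim t\iv$, which is naturally parametrized by $\{-1,+1\}\cup\T_+$. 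Allan's local principle then decomposes invertibility in $\cS^p(PC)/\mathcal{K}$ into invertibility of local algebras $\cS^p(PC)_\tau$ indexed by $\tau\in\{-1,+1\}\cup\T_+$.

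\medskip

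Next I would identify each local algebra $\cS^p(PC)_\tau$ with a concrete algebra of operators. Near a single point $\tau\in\T$, a piecewise continuous symbol is locally equivalent modulo $C(\T)$ to a step function with values $\phi^\pm(\tau)$, and the standard Gohberg--Krupnik/Duduchava calculation (see \cite[Chap.~2]{HRS}) shows that the compression of a jump-type multiplier to the Hardy space behaves, after appropriate Mellin transform, like a scalar combination of the operators $S=M^0(s)$ and $N=M^0(n)$ on $L^p(\Rp)$. This produces a $2\times 2$ block representation. For $\tau=\pm 1$ the flip $W$ is locally a central involution and can be diagonalized within the same 2-block representation, yielding case (a). For $\tau\in\T_+$ the points $\tau$ and $\bar\tau$ are locally disjoint but are swapped by $W$, so the local algebra is the direct sum of two copies of the local algebra at $\tau$ with $W$ acting as the exchange of these summands; combined with the local action of $L(\phi)$ this gives the $4\times 4$ block representation of case~(b).

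\medskip

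With both homomorphisms $H_\tau$ constructed, the final step is to check that they are well-defined on all of $\cS^p(PC)$ (not merely on generators) and that the family $\{H_\tau\}_{\tau}$ is faithful modulo compacts, which is precisely Allan's local principle together with the identification of local algebras. Fredholmness of $A$ then becomes invertibility of each $H_\tau(A)$, establishing part~(c).

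\medskip

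The main obstacle, in my view, is the local analysis at the fixed points $\tau=\pm 1$: one must simultaneously localize multipliers (where $\phi^\pm(\tau)$ enter) and diagonalize the flip $W$, and one must also verify that the Mellin symbols of $S$ and $N$ combine consistently with the flip to produce the stated matrix formulas. The away-from-fixed-point case $\tau\in\T_+$ is essentially two independent copies of the pure Toeplitz analysis glued by $W$, so once the framework is set up it is almost automatic. Everything else is a careful bookkeeping of the Roch--Silbermann argument specialized to the generators listed in the theorem.
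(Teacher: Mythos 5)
The paper does not actually prove this theorem: it is imported verbatim as a specialization of Theorem 9.1 of \cite{RS}, with the remark that the original proof rests on localization principles and Mellin convolution techniques. Your outline is therefore not competing with an argument in the paper but with the one in the cited source, and as a roadmap it is faithful to that source: quotienting by the compacts, localizing over the $W$-symmetric continuous multipliers (whose maximal ideal space is $\T/(t\sim t\iv)\cong\{-1,1\}\cup\T_+$), and identifying the local algebras at the fixed points $\pm1$ and at the pairs $\{\tau,\bar\tau\}$ with concrete $2\times2$ and $4\times4$ Mellin-operator algebras is exactly the Roch--Silbermann strategy (see also \cite[Chap.~2]{HRS} and \cite[Sec.~4.95--102]{BS1}).

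That said, what you have written is a sketch rather than a proof, and the places you gloss over are precisely where all the work lives. Three points in particular: (1) the existence of $H_\tau$ as a homomorphism on all of $\cS^p(PC)$, not merely a prescription on generators, has to be established --- one must check that the assignment respects every relation in the algebra, which is usually done by exhibiting $H_\tau$ as a strong limit of concretely constructed operators (translations/dilations composed with the Mellin transform) rather than by fiat; (2) Allan's local principle only reduces invertibility in the quotient to invertibility in the local algebras, so to get the clean ``if and only if'' of part (c) in terms of $H_\tau(A)$ you must prove that each local algebra is \emph{isomorphic} (or at least that invertibility is equivalent, e.g.\ via inverse-closedness of the image) to the displayed matrix Mellin algebra, not just that it maps onto it; and (3) the specific matrices, e.g.\ $H_\tau(S_\T)=\left(\ba{cc}S&-N\\ N&-S\ea\right)$ with $s(x)=\coth((x+i/p)\pi)$ and $n(x)=(\sinh((x+i/p)\pi))\iv$, come out of an explicit computation of the local representative of the singular integral operator near a point of $\T$ in the $L^p$ setting; the $p$-dependence of these symbols is essential for everything that follows in the paper and cannot be waved through. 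None of these gaps indicates a wrong approach --- they are the content of \cite{RS} --- but a self-contained proof would have to supply them.
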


We also state the following basic result for symbols in $L^\iy(\T)$. 
The proof is standard (see, e.g.,  Prop.~2.2 in \cite{BE2}).

\begin{proposition}\label{p2.2}
Let $a,b\in L^\iy(\T)$ and $1<p<\iy$. If $T(a)+H(b)$ is Fredholm on $\Hp$, then $a$ is invertible in $L^\iy(\T)$.
\end{proposition}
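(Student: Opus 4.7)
The natural approach is to prove the contrapositive: if $a$ is not invertible in $L^\iy(\T)$, exhibit a weakly null unit sequence $(f_n)\subset\Hp$ with $\|(T(a)+H(b))f_n\|_p\to 0$. Such a sequence contradicts Fredholmness via the standard regularizer estimate: there exist $c>0$ and a compact operator $K$ on $\Hp$ with $\|(T(a)+H(b))f\|_p+\|Kf\|_p\ge c\|f\|_p$ for all $f\in\Hp$, and $\|Kf_n\|_p\to 0$ whenever $f_n\to 0$ weakly.

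For the test functions, non-invertibility of $a$ in $L^\iy(\T)$ means the essential infimum of $|a|$ vanishes, so for every $\eps>0$ the set $E_\eps=\{t\in\T:|a(t)|<\eps\}$ has positive measure. Applied to the bounded (hence log-integrable) weight $w_M=M\chi_{E_\eps}+\chi_{\T\setminus E_\eps}$, classical outer-function theory yields an outer $g\in\Hp$ with $|g|=w_M$ a.e. A direct splitting of the integrals gives
\[
\frac{\|ag\|_p^p}{\|g\|_p^p}\;\le\;\frac{\eps^p M^p|E_\eps|+\|a\|_\iy^p|\T\setminus E_\eps|}{M^p|E_\eps|+|\T\setminus E_\eps|}\;\longrightarrow\;\eps^p\quad\text{as }M\to\iy,
\]
so after choosing $M=M(\eps)$ large and normalizing I obtain $g_\eps\in\Hp$ with $\|g_\eps\|_p=1$ and $\|ag_\eps\|_p\le 2\eps$.

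Next I shift these test functions to suppress the Hankel contribution. For $N\ge 0$ set $f_{\eps,N}=T(t^N)g_\eps=t^N g_\eps$, still of unit norm. Since $ag_\eps\in\Lp$ and $1<p<\iy$, the $L^p$-boundedness of the Riesz projection gives $\|T(a)f_{\eps,N}\|_p=\|P(t^N a g_\eps)\|_p\to\|ag_\eps\|_p$ as $N\to\iy$. Plugging $u=b$ and $v=t^{-N}$ into (\ref{Hab}) and using $H(t^{-N})=0$ for $N\ge 0$ yields the identity $H(b)T(t^N)=H(bt^{-N})$, and the analogous tail estimate shows $\|H(bt^{-N})g_\eps\|_p=\|P(t^{-N}bJg_\eps)\|_p\to 0$. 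Hence there exists $N(\eps)$, which can be arranged with $N(\eps)\to\iy$ as $\eps\to 0$, so that $\|(T(a)+H(b))f_{\eps,N(\eps)}\|_p\le 3\eps$.

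Setting $f_n=f_{1/n,N(1/n)}$ completes the construction. Weak convergence $f_n\to 0$ in $\Hp$ is verified on the dense set of trigonometric polynomials: the pairing $\langle f_n,t^k\rangle=\widehat{g_{1/n}}(k-N(1/n))$ vanishes as soon as $N(1/n)>k$, and the sequence is norm-bounded. Combined with $\|(T(a)+H(b))f_n\|_p\to 0$, this yields the promised contradiction with Fredholmness. The main subtlety is the outer-function construction, where the modulus of $g_\eps$ must concentrate on the set where $|a|$ is small; the two shift asymptotics afterwards are routine $L^p$ tail estimates relying crucially on $1<p<\iy$.
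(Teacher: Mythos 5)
Your argument is correct and is precisely the standard Hartman--Wintner-type proof that the paper invokes by reference (``The proof is standard, see Prop.~2.2 in \cite{BE2}''): the key step in both is the shift identity $H(b)T(t^N)=H(bt^{-N})$ together with $\|H(bt^{-N})g\|_p\to 0$, which reduces the Fredholm lower bound modulo compacts to a lower bound for multiplication by $a$, contradicting $\mathrm{ess\,inf}|a|=0$. The outer-function construction of the concentrated unit vectors $g_\eps$ and the weak-null verification are both sound for $1<p<\iy$, so no gaps remain.
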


Since we are interested in Fredholm criteria, in view of this result it is no loss of generality to assume $a\in GPC$, where $GPC$ stands for the group of all invertible elements in the algebra $PC$ of piecewise continuous functions on $\T$. Moreover, due to the assumption $a\ta=b \tb$ we can also assume $b\in GPC$.

As consequence of Theorem \ref{t2.1} we now obtain explicit Fredholm criteria for $T(a)+H(b)$ on $\Hp$ with $a,b\in GPC$, $a\ta=b\tb$.

\begin{corollary}\label{c2.2}
Let $a,b\in GPC$ satisfy $a\ta=b\tb$, and let $1<p<\iy$, $1/p+1/q=1$. Put
\be\label{f.c9}
c=\frac{a}{b}=\frac{\tb}{\ta},\qquad d=\frac{\ta}{b}=\frac{\tb}{a}.
\ee
Then $T(a)+H(b)$ is Fredholm on $\Hp$ if and only if
\bq\label{f.c10}
\frac{1}{2\pi}\arg c^-(1) \notin \frac{1}{2}+\frac{1}{2p}+\Z, &&
\frac{1}{2\pi}\arg d^-(1) \notin \frac{1}{2}+\frac{1}{2q}+\Z,
\\ \label{f.c11}
\frac{1}{2\pi}\arg c^-(-1) \notin \frac{1}{2p}+\Z,\quad&&
\frac{1}{2\pi}\arg d^-(-1) \notin \frac{1}{2q}+\Z,
\\ \label{f.c12}
\frac{1}{2\pi}\arg\left(\frac{c^-(\tau)}{c^+(\tau)}\right)  \notin \frac{1}{p}+\Z,\quad &&
\frac{1}{2\pi}\arg\left(\frac{d^-(\tau)}{d^+(\tau)}\right) \notin \frac{1}{q}+\Z,\quad 
\mbox{ for each }\tau\in\T_+.
\eq
\end{corollary}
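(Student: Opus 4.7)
The plan is to realise $T(a)+H(b)$ as the non-trivial block of an operator in $\cS^p(PC)$ and then invoke Theorem~\ref{t2.1}(c). First, introduce on $L^p(\T)$ the extension
\be
\cA \;:=\; PL(a)P + PL(b)JP + (I-P),
\ee
which lies in $\cS^p(PC)$ once we write $P=(I+S_\T)/2$ and $J=L(t\iv)W$. Since $\cA$ is block-diagonal with respect to $L^p(\T)=\Hp\oplus(I-P)L^p(\T)$, with blocks $T(a)+H(b)$ and the identity, Fredholmness of $\cA$ on $L^p(\T)$ is equivalent to Fredholmness of $T(a)+H(b)$ on $\Hp$. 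By Theorem~\ref{t2.1}(c) it then suffices to determine when $H_\tau(\cA)$ is invertible for each $\tau\in\{-1,1\}\cup\T_+$.

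For $\tau\in\{-1,1\}$, the Mellin symbol of $H_\tau(P)$ at $x\in\R$ is a rank-one projection whose range is spanned by $(1,u)^T$ and whose kernel is spanned by $(u,1)^T$, where $u=e^{-(x+i/p)\pi}$ (so $u^2=w:=e^{-2(x+i/p)\pi}$). In this adapted basis $H_\tau(\cA)$ becomes the diagonal symbol
\be
\diag\!\left(\frac{a^+(\tau)-a^-(\tau)u^2+\tau\iv u(b^+(\tau)-b^-(\tau))}{1-u^2},\;1\right),
\ee
and its invertibility is equivalent to non-vanishing of the quadratic $Q_\tau(u):=a^-(\tau)u^2-\tau\iv(b^+(\tau)-b^-(\tau))u-a^+(\tau)$ as $u$ runs through the ray $\{re^{-i\pi/p}:r>0\}$.

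The decisive algebraic step is that $a\ta=b\tb$ forces $a^+(\tau)a^-(\tau)=b^+(\tau)b^-(\tau)$ at $\tau=\pm1$, so the discriminant of $Q_\tau$ equals $(b^+(\tau)+b^-(\tau))^2$ and $Q_\tau$ factors with zeros $u_1=-\tau\iv/c^-(\tau)$ and $u_2=\tau\iv d^-(\tau)$, the second equality using the identity $b^+/a^-=a^+/b^-=d^-$ which is itself a consequence of $a^+a^-=b^+b^-$. Requiring neither zero to lie on the ray, together with $1/p+1/q=1$ to convert phases, turns non-vanishing directly into the arithmetic exclusions \eqref{f.c10}--\eqref{f.c11} on $\arg c^-(\tau)$ and $\arg d^-(\tau)$; the factor $\tau\iv$ is exactly what accounts for the $1/2$-shift at $\tau=1$ and its absence at $\tau=-1$.

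For $\tau\in\T_+$ the symbol $H_\tau(P)$ is the direct sum of two copies of the same rank-one projection (one for each of the limits at $\tau$ and at $\bar\tau$), while $H_\tau(W)$ acts antidiagonally between the blocks; the same basis change reduces $H_\tau(\cA)$ to a $2\times2$ problem coupling the two ``$v_+$''-components, plus trivial identity parts. The determinant of this $2\times 2$ piece is again a quadratic in $w$ whose discriminant, by $a^+(\tau)a^-(\bar\tau)=b^+(\tau)b^-(\bar\tau)$ and $a^-(\tau)a^+(\bar\tau)=b^-(\tau)b^+(\bar\tau)$ (both instances of $a\ta=b\tb$), is a perfect square; using $c\tilde c=d\tilde d=1$ to collapse limits at $\bar\tau$ to reciprocals of those at $\tau$, the two zeros become $c^+(\tau)/c^-(\tau)$ and $d^-(\tau)/d^+(\tau)$, and excluding them from the ray yields \eqref{f.c12}. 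The main technical obstacle throughout is this discriminant factorisation: without the standing hypothesis $a\ta=b\tb$ the quadratics do not split, and one would face coupled matricial Riemann--Hilbert conditions in place of the two independent scalar conditions on $c$ and $d$.
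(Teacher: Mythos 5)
Your proposal is correct and follows essentially the same route as the paper: lift $T(a)+H(b)$ to a block-diagonal operator in $\cS^p(PC)$ on $L^p(\T)$, apply Theorem~\ref{t2.1}, and test invertibility of the local symbols $H_\tau$, with the hypothesis $a\ta=b\tb$ entering exactly where you say it does (to make the discriminants perfect squares so the local conditions decouple into the two scalar conditions on $c$ and $d$). The only difference is presentational: you diagonalize the rank-one projection symbol of $H_\tau(P)$ and read off the zeros of the resulting $1\times1$ and $2\times2$ compressions, whereas the paper computes the $2\times2$ and $4\times4$ determinants directly and factors them; both yield the same excluded points $-\tau/c^-(\tau)$, $\tau d^-(\tau)$ and $c^+(\tau)/c^-(\tau)$, $d^-(\tau)/d^+(\tau)$.
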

\begin{proof}
First of all notice that $c,d\in GPC$ are well defined and satisfy 
the condition $c\tilde{c}=d\tilde{d}=1$. Moreover,
\be\label{f.14cd}
c^\pm(\tau)=\frac{a^\pm(\tau)}{b^\pm(\tau)}=\frac{b^{\mp}(\bt)}{a^{\mp}(\bt)},
\qquad
d^\pm(\tau)=\frac{a^\mp(\bt)}{b^\pm(\tau)}=\frac{b^{\mp}(\bt)}{a^{\pm}(\tau)}.
\ee
Now observe that $T(a)+H(b)$ is Fredholm on $\Hp$ if and only if
$$
A=PL(a)+PL(bt\iv)W+Q
$$
is Fredholm on $\Lp$, where $Q=I-P$ and $J=L(t\iv)W$. Indeed,
$$
PL(a)P+PL(bt\iv)WP+Q=\Big(I-PL(a)Q-PL(bt\iv)WQ\Big)\Big(PL(a)+PL(bt\iv)W+Q\Big),
$$
where the first operator on the right is invertible. Noting that $P=(I+S_{\T})/2$ and $Q=(I-S_{\T})/2$, it follows that the operator $A$ belongs
to $\cS^p(PC)$, and thus we can apply Theorem \ref{t2.1}. We conclude that
$A$ is Fredholm on $L^p(\T)$ if and only if for all $\tau\in\T_+\cup
\{-1,1\}$ the operators $H_\tau(A)$ are invertible.
Therefore we need to compute the operators $H_\tau(A)$ and examine their invertibility.

For $\tau\in\{-1,1\}$ we obtain 
$$
H_\tau(P) =\frac{1}{2}
\left(\ba{cc} I+S&-N\\ N&I-S\ea\right),\qquad
H_\tau(Q) =\frac{1}{2}
\left(\ba{cc} I-S&N\\ -N&I+S\ea\right),
$$
and 
$$
H_\tau(L(a)+L(bt\iv)W) = 
\left(\ba{cc} a^+(\tau)&\tau b^+(\tau)\\ \tau b^-(\tau)&a^-(\tau)\ea\right).
$$
It is now obvious that $H_\tau(A)$ is a $2\times 2$ Mellin convolution operator with 
a $2\times 2$ matrix symbol. Moreover, this operator is invertible if and only if the determinant of its symbol does not vanish on $\xR$. By straightforward computations (using $s^2=1+n^2$) the determinant evaluates to
$$
\frac{1}{2}\Big(a^-(\tau)(1-s(x))+a^+(\tau)(1+s(x))+\tau (b^+(\tau)-b^-(\tau))n(x)\Big).
$$
For $x=\pm\iy$ we obtain the conditions $a^\pm(\tau)\neq0$, which are fulfilled because $a\in GPC$. Using (\ref{f.14cd}) this expression becomes
$$
\frac{b^-(\tau)n(x)}{2}\Big(c^-(\tau)\frac{1-s(x)}{n(x)}+d^-(\tau)\frac{1+s(x)}{n(x)}+\tau(c^-(\tau)d^-(\tau)-1)\Big),
$$
which (up to the nonzero factor in front of it) is equal to
\bq
\lefteqn{-c^-(\tau)e^{-(x+i/p)\pi}+d^-(\tau)e^{(x+i/p)\pi}+\tau c^-(\tau)d^-(\tau)-\tau}\nn\\
&=& \tau\Big(c^-(\tau)+\tau e^{(x+i/p)\pi}\Big)\Big(d^-(\tau)-\tau e^{-(x+i/p)\pi}\Big).
\eq
This expression is non-zero for all $x\in\R$ if and only if conditions (\ref{f.c10}) and (\ref{f.c11}) hold.

For $\tau\in\T_+$ we can do a similar computation, which leads to a Mellin convolution operator
with a $4\times 4$ matrix function, whose determinant evaluates to $1/4$ times
$$
\Big(a^-(\bt)a^+(\tau)+a^+(\bt)a^-(\tau)+(b^+(\tau)-b^-(\tau))(b^+(\bt)-b^-(\bt))\Big)(1-s^2(x))
$$
$$
\mbox{}+a^-(\tau)a^-(\bt)(1-s(x))^2+a^+(\tau)a^+(\bt)(1+s(x))^2.
$$
Using $a^+(\tau)a^-(\bt)=b^+(\tau)b^-(\bt)$ and $a^-(\tau)a^+(\bt)=b^-(\tau)b^+(\bt)$ this simplifies to
$$
(b^+(\tau)b^+(\bt)+b^-(\tau)b^-(\bt))(1-s^2(x))
+a^-(\tau)a^-(\bt)(1-s(x))^2+a^+(\tau)a^+(\bt)(1+s(x))^2.
$$
For $x=\pm\iy$ we get the conditions $a^\pm(\tau)a^\pm(\bt)\neq0$, which are satisfied because $a\in GPC$. Dividing by $1-s^2(x)$ and $b^-(\tau)b^-(\bt)$ we obtain
\bq
\lefteqn{
c^-(\tau)c^-(\bt)d^-(\tau)d^-(\bt)+1+
c^-(\tau)c^-(\bt)\frac{1-s(x)}{1+s(x)}+
d^-(\tau)d^-(\bt)\frac{1+s(x)}{1-s(x)}}
\hspace*{8ex}\nn\\
&=&
\Big(c^-(\tau)c^-(\bt)-e^{2(x+i/p)\pi}\Big)
\Big(d^-(\tau)d^-(\bt)-e^{-2(x+i/p)\pi}\Big).\nn
\eq
This expression is non-zero for all $x\in\R$ if and only if conditions (\ref{f.c12}) hold.
\end{proof}

There is a more geometric interpretation of the Fredholm conditions (\ref{f.c10})--(\ref{f.c12}), which eventually suggest a formula for the Fredholm index of $T(a)+H(b)$. For $z_1,z_2\in\C$ and
$\theta\in(0,1)$ introduce the (open) circular arc joining $z_1$ and $z_2$,
$$
\cA(z_1,z_2; \theta)=\Big\{\; z\in\C\setminus\{z_1,z_2\}\;:\; \frac{1}{2\pi}\arg\left(\frac{z-z_1}{z-z_2}\right)\in \theta+\Z\Big\}.
$$ 
In case $\theta=1/2$ the arc becomes a line segment. For $z_1=z_2$
the set is empty.
Assuming $a,b\in GPC$, $a\ta=b\tb$, and introducing the auxiliary functions $c,d$ by (\ref{f.c9})
the conditions  (\ref{f.c10})--(\ref{f.c12}) can be restated by requiring that
the origin does not lie on any of the arcs
\be
\cA(1,c^+(1); \tfrac{1}{2}+\tfrac{1}{2p}),\qquad \cA(c^-(\tau),c^+(\tau);\tfrac{1}{p}),\qquad \cA(c^-(-1),1;\tfrac{1}{2p})
\ee
and
\be
\cA(1,d^+(1); \tfrac{1}{2}+\tfrac{1}{2q}),\qquad \cA(d^-(\tau),d^+(\tau);\tfrac{1}{q}),\qquad \cA(d^-(-1),1;\tfrac{1}{2q})
\ee
where $\tau\in\T_+$. Clearly we need only to look at those $\tau\in\T_+$ at which there is a jump discontinuity. Notice that $c^\pm(\tau)\neq0$ and $d^\pm(\tau)\neq 0$ because by assumption $c,d\in GPC$.

Since $c\tc=d\td=1$, the functions $c$ and $d$ are determined by their values on $\T_+$. If we let run $\tau$ along $\T_+$ from $1$ to $-1$, then the image $c(\tau)$
describes a curve, with possible jump discontinuities, starting from $c^+(1)$ and terminating at $c^-(-1)$.
Let us now insert the arc $\cA(c^-(\tau),c^+(\tau);\tfrac{1}{p})$ for each $\tau\in\T_+$ at which $c$ has a jump discontinuity. In addition, issert the arcs  $\cA(1,c^+(1); \tfrac{1}{2}+\tfrac{1}{2p})$ and $\cA(c^-(-1),1;\tfrac{1}{2p})$, if necessary, joining the end points $c^+(1)$ and $c^-(-1)$ to $1$. We will obtain a closed oriented curve $c^{\#,p}$ which does not pass through the origin if $T(a)+H(b)$ is Fredholm. To this curve we can associate a winding number, denoted  by
\be
\mathrm{wind}(c^{\#,p})\in\Z.
\ee
The construction is clear in case of finitely many jump discontinuities and piecewise smooth $c$.
For general piecewise continuous $c$ one needs to elaborate a little bit more on some standard technical details, which we will not pursue. An alternative way of defining the winding number  can be based on Lemma \ref{l2.4} below (see also Theorem \ref{t2.5}).

%
%


\begin{figure}
\begin{picture}(450,578)
\put(0,15){\framebox(168,168){}}
\put(0,15){\includegraphics[scale=.7]{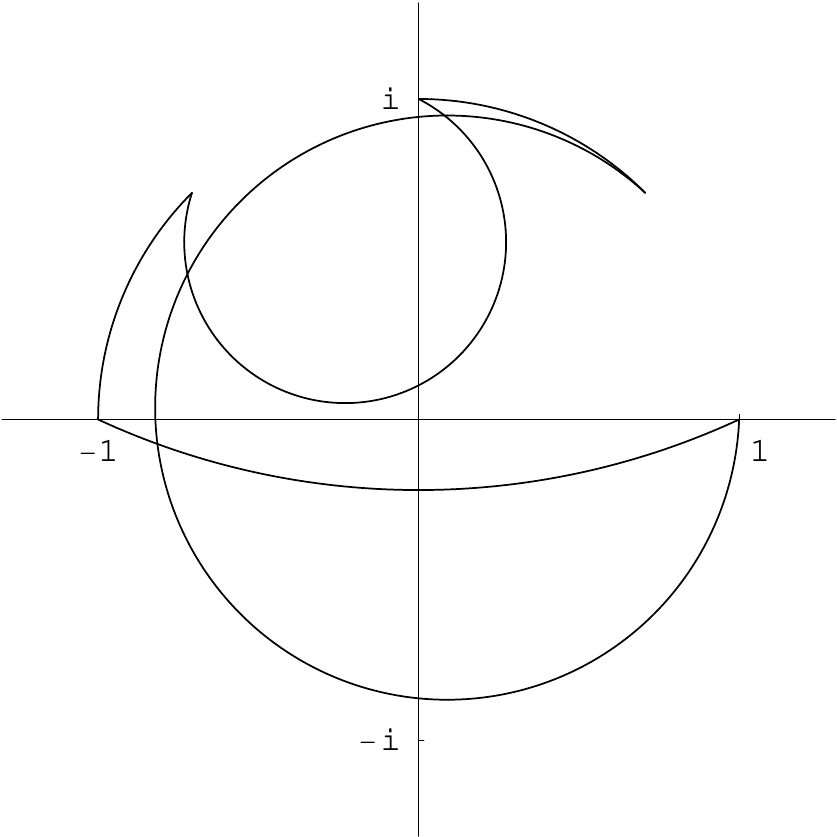}}
\put(0,0){\mbox{Fig.~5: $p=1.16$, wind$(c^{\#,p})=0$}}
\put(235,15){\framebox(168,168){}}
\put(235,15){\includegraphics[scale=.7]{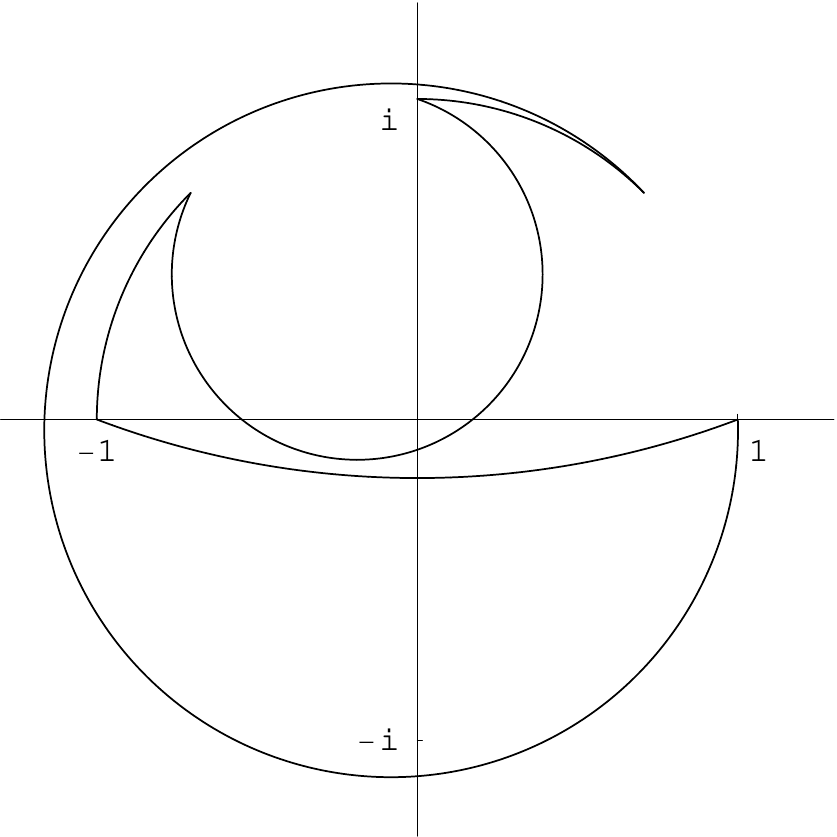}}
\put(235,0){\mbox{Fig.~6: $p=1.13$, wind$(c^{\#,p})=-1$}}
\put(0,210){\framebox(168,168){}}
\put(0,210){\includegraphics[scale=.7]{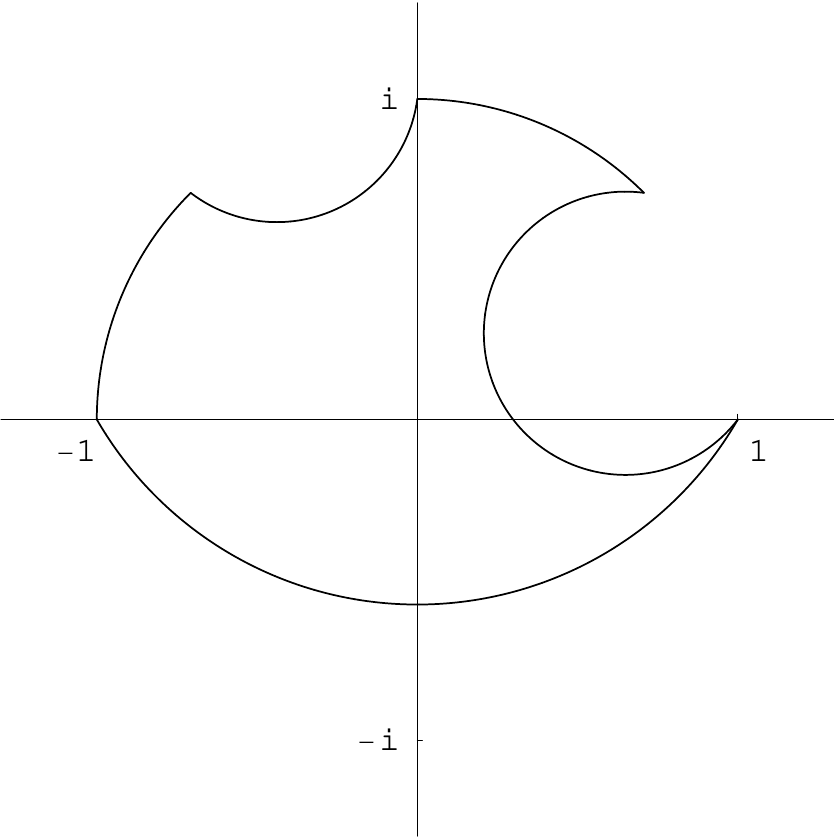}}
\put(0,195){\mbox{Fig.~3: $p=1.5$, wind$(c^{\#,p})=1$}}
\put(235,210){\framebox(168,168){}}
\put(235,210){\includegraphics[scale=.7]{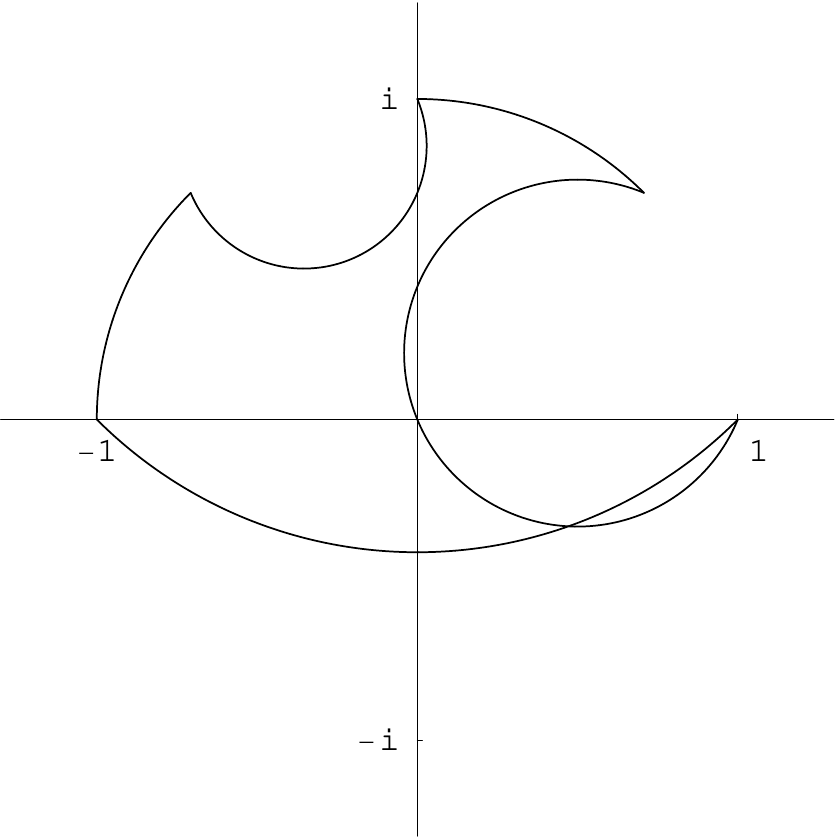}}
\put(235,195){\mbox{Fig.~4: $p=4/3$}}
\put(0,405){\framebox(168,168){}}
\put(0,405){\includegraphics[scale=.7]{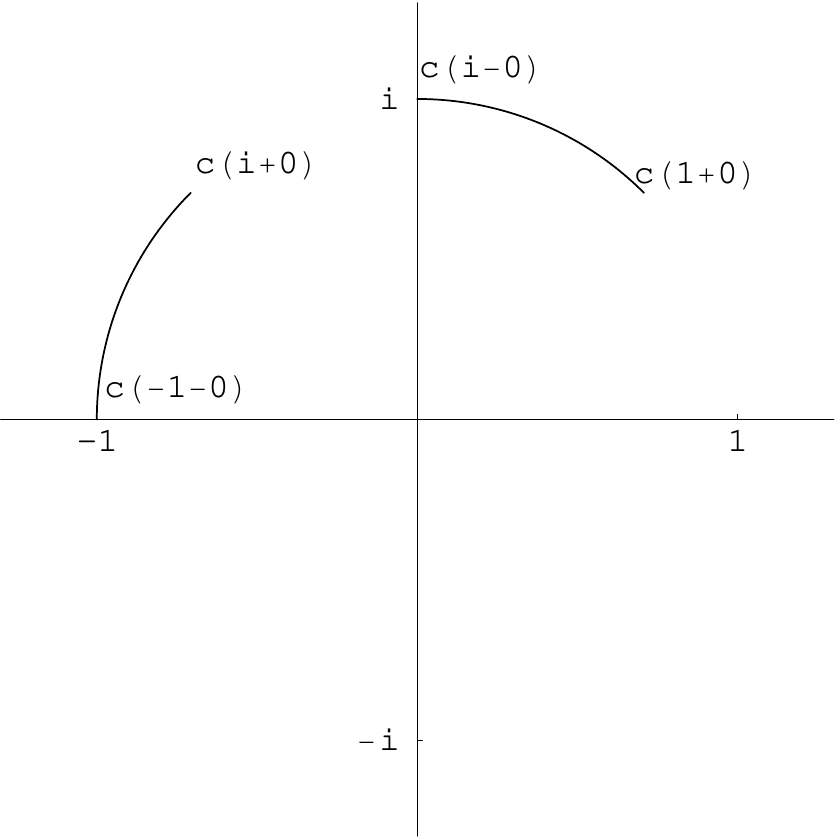}}
\put(0,390){\mbox{Fig.~1: Image of $c(e^{ix})$, $0<x<\pi$}}
\put(235,405){\framebox(168,168){}}
\put(235,405){\includegraphics[scale=.7]{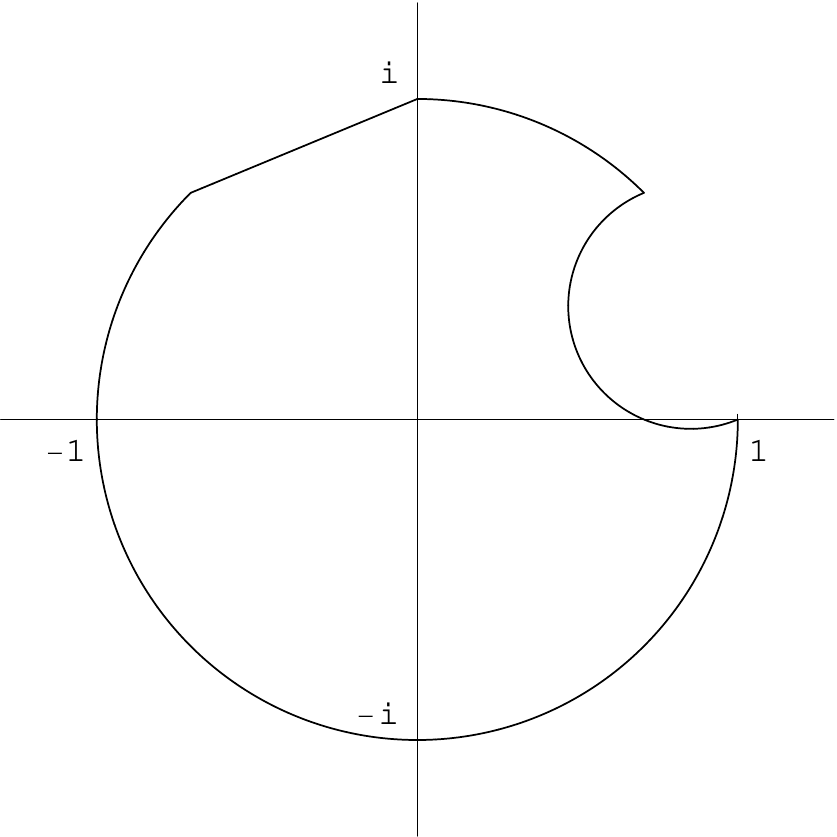}}
\put(235,390){\mbox{Fig.~2: $p=2$, wind$(c^{\#,p})=1$}}
\end{picture} 
\end{figure}
%

\begin{example}\label{ex.1}\em
Let us consider the function
\be\label{fct.c}
c(e^{ix})=\left\{\ba{ll}
e^{\pi i/4+ix/2} & 0<x<\pi/2\\
e^{\pi i/2+ix/2} & \pi/2<x<3\pi/2\\
e^{3 \pi i/4+ix/2} & 3\pi/2<x<2\pi.
\ea\right.
\ee
Clearly, $c\in GPC$ 	and $c\tc=1$. This function has jump discontinuities at $t=1$ and $t=\pm i$.
The image of this function restricted to $\T_+$ is depicted in Fig.~1. The values that determine the arcs
that have to be inserted are
$$
c^+(1)=e^{\pi i/4},\quad c^-(i)=e^{\pi i/2},\quad c^+(i)=e^{3 \pi i/4},\quad c^-(-1)=-1.
$$
The arcs are
$$
\cA(1,e^{\pi i/4};\tfrac{1}{2}+\tfrac{1}{2p}),\quad
\cA(e^{\pi i/2},e^{3 \pi i/4}; \tfrac{1}{p}),\quad
\cA(-1,1;\tfrac{1}{2p}).
$$
Notice that we also have an arc related to $t=-1$ even though $c(t)$ is continuous there.
In Fig.~2-6 we see the curves $c^{\#,p}$ for various values of $p$. For $p=2$ we have to insert a line segment and two half circles. The curve $c^{\#,p}$ passes through the origin for the values $p=4/3$ and
$p=8/7=1.1428\dots$ (not depicted). In all other cases one can associated a winding number.
\qed
\end{example}

For the function $d$ the construction is similar and yields a curve $d^{\#,q}$ with a winding number
\be
\mathrm{wind}(d^{\#,q})\in\Z,
\ee
where $1/p+1/q=1$. We can conclude that $T(a)+H(b)$ is Fredholm on $\Hp$ if and only the origin does not lie on the curves $c^{\#,p}$ and $d^{\#,q}$.

In order to relate the winding numbers of $c^{\#,p}$ and $d^{\#,q}$ to the Fredholm index
of $T(a)+H(b)$ we need the following lemma. 

\begin{lemma}\label{l2.4}
Let $c,d\in GPC$, $c\tc=d\td=1$, satisfy the conditions (\ref{f.c10})--(\ref{f.c12}). Then there exist
$n,m\in \Z$ and $\gamma,\delta\in PC$ such that  $\gamma+\tilde{\gamma}=\delta+\tilde{\delta}=0$,
\be\label{f.c30}
c(t)=t^{2n}\exp(\gamma(t)),\qquad d(t)=t^{2m}\exp(\delta(t)),
\ee
and, for each $\tau\in\T_+$,
\bq\label{f.c31}
-\tfrac{1}{2\pi}\Im \gamma^+(1) \in \left(-\tfrac{1}{2q},\tfrac{1}{2}+\tfrac{1}{2p}\right), \qquad\quad&&
-\tfrac{1}{2\pi}\Im \delta^+(1) \notin \left(-\tfrac{1}{2p}, \tfrac{1}{2}+\tfrac{1}{2q}\right),
\\ \label{f.c32}
\tfrac{1}{2\pi}\left[
\Im \gamma^-(\tau)-\Im \gamma^+(\tau)\right]  \in \left(-\tfrac{1}{q},\tfrac{1}{p}\right),\,&&
\tfrac{1}{2\pi}\left[\Im \delta^-(\tau)-\Im \delta^+(\tau)\right] \in \left(-\tfrac{1}{p},\tfrac{1}{q}\right),\quad 
\\ \label{f.c33}
\tfrac{1}{2\pi}\Im \gamma^-(-1) \in \left(-\tfrac{1}{2}-\tfrac{1}{2q},\tfrac{1}{2p}\right),\qquad\quad&&
\tfrac{1}{2\pi}\Im \delta^-(-1) \in\left(-\tfrac{1}{2}-\tfrac{1}{2p}, \tfrac{1}{2q}\right).
\eq
The representations (\ref{f.c30}) are unique.
\end{lemma}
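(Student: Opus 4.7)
My plan is to build $\gamma$ as a suitably normalized piecewise continuous branch of $\log c$ on $\T_+$, extend it to $\T$ via the odd symmetry $\gamma=-\tilde\gamma$, and finally adjust by an integer multiple of a fixed branch of $\log t$ to enforce the last condition. The key observation is that each of (\ref{f.c31}), (\ref{f.c32}), (\ref{f.c33}) specifies membership in an open interval of length exactly $1$ (since $\tfrac{1}{2p}+\tfrac12+\tfrac{1}{2q}=\tfrac{1}{p}+\tfrac{1}{q}=1$), matching precisely the ambiguity in choosing a branch of $\log$ -- a shift by $2\pi i$ in the imaginary part corresponds to a shift of $1$ in the normalized scale. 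Moreover, the Fredholm conditions (\ref{f.c10})--(\ref{f.c12}) are exactly the statements that the relevant data do not land on the boundary of these intervals, so each normalization picks out a unique choice.

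For the construction on $\T_+$, I would enumerate the continuity arcs $A_0,A_1,\ldots,A_N$ of $c$ going counterclockwise from $1$ to $-1$, separated by jump points $\tau_1,\ldots,\tau_N\in\T_+$. On $A_0$ pick the unique continuous branch of $\log c$ whose right limit at $1$ satisfies (\ref{f.c31}); then propagate inductively across each $\tau_j$, at each step selecting the unique continuous branch on $A_j$ whose one-sided limits at $\tau_j$ satisfy (\ref{f.c32}). This produces $\gamma_0\in PC(\T_+)$ with $c=e^{\gamma_0}$ on $\T_+$. Extend $\gamma_0$ to $\T_-$ by $\gamma_0(t):=-\gamma_0(t\iv)$; then $\gamma_0+\tilde\gamma_0=0$ by construction, and using $c\tc=1$ together with the identity $c^\pm(\bt)=1/c^\mp(\tau)$ on $\T_+$, the equality $c=e^{\gamma_0}$ continues to hold on $\T_-$ as an identity of $PC$ functions.

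Next, introduce $L(t):=i\theta$ for $t=e^{i\theta}$, $\theta\in(-\pi,\pi)$, so that $L^+(1)=0$, $L^-(-1)=i\pi$, $L$ is continuous on $\T\setminus\{-1\}$, and $L+\tilde L=0$ off $\pm1$. Set $\gamma:=\gamma_0-2nL$, giving $c=t^{2n}e^\gamma$ and preserving $\gamma+\tilde\gamma=0$. Because $L^+(1)=0$, (\ref{f.c31}) is preserved; because $L$ is continuous at every $\tau_j\in\T_+$, the jump condition (\ref{f.c32}) is preserved. The change at $-1$ reads $\Im\gamma^-(-1)=\Im\gamma_0^-(-1)-2\pi n$, so since the target window in (\ref{f.c33}) has length $1$, a unique $n\in\Z$ makes (\ref{f.c33}) hold. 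The construction of $\delta$ and $m$ from $d$ is completely analogous with the roles of $p$ and $q$ interchanged.

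For uniqueness, if $c=t^{2n}e^\gamma=t^{2n'}e^{\gamma'}$ are two factorizations satisfying the normalizations, then $\gamma'-\gamma-2(n'-n)L$ takes values in $2\pi i\Z$ pointwise off the jump set, and hence equals $2\pi i m$ for a locally constant integer-valued $m$. The length-$1$ window (\ref{f.c31}) forces $m^+(1)=0$, the windows (\ref{f.c32}) force $m$ to have no jump across any $\tau_j\in\T_+$, and the symmetry $m+\tilde m=0$ (inherited from $\gamma+\tilde\gamma=0$ and $L+\tilde L=0$) propagates $m\equiv0$ from $\T_+$ to $\T_-$. Finally (\ref{f.c33}) forces $n'=n$, hence $\gamma'=\gamma$. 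The main delicacy I expect is the careful bookkeeping of one-sided limits: verifying that $\gamma=-\tilde\gamma$ correctly implies $\gamma^\mp(\bt)=-\gamma^\pm(\tau)$, and checking that adjusting by $-2nL$ leaves every condition other than (\ref{f.c33}) untouched -- both of which hinge on the continuity of $L$ on $\T\setminus\{-1\}$ and the odd symmetry $L+\tilde L=0$.
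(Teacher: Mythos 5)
Your construction is essentially the paper's: choose a branch of $\log c$ on $\T_+$ normalized so that (\ref{f.c31}) and (\ref{f.c32}) hold (the length-one windows and the Fredholm conditions (\ref{f.c10})--(\ref{f.c12}) making each choice unique), extend to the lower half circle by odd symmetry, subtract $2nL$ with $L(e^{i\theta})=i\theta$ to enforce (\ref{f.c33}) at $-1$, and run the same length-one-interval argument for uniqueness. The one point where your write-up is genuinely weaker than the paper's is the assumption, implicit in "enumerate the continuity arcs $A_0,\dots,A_N$ separated by jump points $\tau_1,\dots,\tau_N$," that $c$ has only finitely many jumps on $\T_+$: a general $GPC$ function may have countably many jump discontinuities accumulating at a point, in which case there is no finite arc decomposition and the inductive propagation across successive jumps does not terminate. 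The paper avoids this by defining the logarithm locally (which is possible near every point because the one-sided limits exist and all but finitely many jumps are small) and then gluing over a finite open cover of $\T_+\cup\{1,-1\}$; your argument should be amended in the same way, after which everything else you wrote (the odd extension, the role of $L$, and the uniqueness via the length-one windows) goes through verbatim.
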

Notice that all the intervals above are open and have length one.
\begin{proof}
Let us consider $c$ on $\T_+\cup\{1,-1\}$. Since $c$ satisfies (\ref{f.c10})--(\ref{f.c12}) it is possible to define a logarithm of $c$ on $\T_+\cup\{1,-1\}$, i.e., $c(t)=\exp(\gamma_0(t))$ such that 
(\ref{f.c31}), (\ref{f.c32}), and 
$$
\tfrac{1}{2\pi}\Im \gamma_0^-(-1) \notin \tfrac{1}{2p}+\Z,\qquad
$$
holds. This can be done by defining the logarithm first locally (up to a multiple of $2\pi i$) and then 
``gluing'' it together using a finite open covering of $\T_+\cup\{1,-1\}$. Even though (\ref{f.c33}) may not hold, it follows that there exists a (unique) $n$ such that 
$$
\tfrac{1}{2\pi}\Im \gamma_0^-(-1)  \in n+ \left(-\tfrac{1}{2}-\tfrac{1}{2q},\tfrac{1}{2p}\right).
$$
Now we define 
$$
\gamma(e^{ix})=\gamma_0(e^{ix}) - 2inx, \qquad 0<x<\pi,
$$
and extend $\gamma$ via $\gamma+\tilde{\gamma}=0$ to all of $\T$, i.e., $\gamma(e^{-ix}):=-\gamma(e^{ix})$ for $0<x<\pi$. It is easy to verify  that 
(\ref{f.c30}) and (\ref{f.c31})--(\ref{f.c33}) hold. 

As for  the uniqueness of the representation, assume that we have two  different representations with
$\gamma_1, \gamma_2$ and $n_1,n_2$. Put $n=n_1-n_2$ and $\gamma=\gamma_1-\gamma_2$ to conclude that $t^{-2n}=\exp(\gamma(t))$. The restrictions (\ref{f.c31})--(\ref{f.c33}) for $\gamma_1$ and $\gamma_2$ imply that 
$$
\tfrac{1}{2\pi}\Big|\Im \gamma^\pm(\pm1)\Big|<1,\qquad\tfrac{1}{2\pi}\Big|\Im\gamma^-(\tau)-\Im\gamma^+(\tau)\Big|<1
$$
due to the length of the original intervals. Using  $t^{-2n}=\exp(\gamma(t))$, it follows that 
$\gamma^\pm(\pm1)=0$  and that $\gamma(t)$ is continuous for $t\in\T_+$. Continuity implies the
uniqueness of the logarithm $\gamma$ and $\gamma(e^{ix})=-inx$, which yields $n=0$ and $\gamma\equiv 0$.

The argumentation for $d, \delta$, and $m$ is similar.
\end{proof}

\begin{theorem}\label{t2.5}
Let $a,b\in GPC$ satisfy $a\ta=b\tb$, and let $1<p<\iy$, $1/p+1/q=1$. Define $c,d$ by (\ref{f.c9}) and assume 
(\ref{f.c10})--(\ref{f.c12}). Then $T(a)+H(b)$ is Fredholm on $\Hp$ and has Fredholm index
\be\label{f34}
\ind(T(a)+H(b))=\wind(d^{\#,q})- \wind (c^{\#,p}).
\ee
Morever, assuming (\ref{f.c30}) we have $n=\wind (c^{\#,p})$ and $m=\wind (d^{\#,q})$.

\end{theorem}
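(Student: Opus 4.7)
My plan is to combine Corollary \ref{c2.2} with two homotopy arguments, one that identifies the integers $n,m$ of Lemma \ref{l2.4} with the winding numbers and one that reduces the index computation to an explicit monomial case. Fredholmness of $T(a)+H(b)$ is immediate from Corollary \ref{c2.2}, so the issue is only the index formula. Applying Lemma \ref{l2.4} produces unique $n,m\in\Z$ and antisymmetric $\gamma,\delta\in PC$ with $c=t^{2n}e^{\gamma}$ and $d=t^{2m}e^{\delta}$, and it then suffices to prove (i) $n=\wind(c^{\#,p})$, $m=\wind(d^{\#,q})$, and (ii) $\ind(T(a)+H(b))=m-n$.

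For (i), I would use the homotopy $c_{s}:=t^{2n}\exp(s\gamma)$, $s\in[0,1]$. Each of the open intervals appearing in (\ref{f.c31})--(\ref{f.c33}) contains $0$ (e.g.\ $0\in(-\tfrac{1}{2q},\tfrac{1}{2}+\tfrac{1}{2p})$), so convexity guarantees that the inequalities for $\gamma$ remain valid with $s\gamma$ in place of $\gamma$. Hence $c_{s}$ satisfies the Fredholm conditions for every $s$, the origin stays off the curve $c_{s}^{\#,p}$, and its winding number is a constant of the deformation. At $s=0$, $c_{0}=t^{2n}$ is smooth with no jumps and $c_{0}^{\#,p}$ is the closed loop $\theta\mapsto e^{2in\theta}$, $\theta\in[0,\pi]$, of winding number $n$. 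The same reasoning applied to $\delta$ gives $\wind(d^{\#,q})=m$.

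For (ii), note that under $a\ta=b\tb$ the pair $(a,b)$ is determined by $(c,d)$ only up to a common factor $\psi\in GPC$ satisfying $\tilde{\psi}=\psi$: explicitly, $\psi:=a\cdot t^{-(n-m)}\exp(-(\gamma-\delta)/2)$ coincides with $b\cdot t^{n+m}\exp((\gamma+\delta)/2)$ (using $\ta=bd$), and a direct calculation gives $\tilde{\psi}=\psi$. Since $\psi$ is symmetric and invertible in $PC$, I would choose a piecewise continuous symmetric branch of $\log\psi$ by gluing local logarithms on $\T_{+}\cup\{\pm1\}$ and extending via $\widetilde{\log\psi}=\log\psi$, and set $\psi_{s}:=\exp(s\log\psi)$. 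This yields a deformation of $\psi$ to $1$ within the symmetric elements of $GPC$. Replacing $\psi$ by $\psi_{s}$ inside the expressions for $a,b$ leaves $c$ and $d$ unaltered, hence preserves Fredholmness, so the Fredholm index is constant throughout this first stage. A second stage interpolates $(\gamma,\delta)$ to $(0,0)$ through $(s\gamma,s\delta)$, and the same convexity argument keeps the operator Fredholm. The endpoint is $(a,b)=(t^{n-m},t^{-(n+m)})$, for which the Hankel summand $H(t^{-(n+m)})$ is zero if $n+m\ge 0$ and of finite rank otherwise, hence compact in either case. Therefore
\[
\ind(T(a)+H(b))\;=\;\ind(T(t^{n-m}))\;=\;m-n,
\]
which together with (i) is the asserted identity.

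The main technical point will be the existence of a symmetric branch of $\log\psi$ on $\T_{+}\cup\{\pm 1\}$ and the verification that all three stages of deformation stay inside $GPC$ and in the Fredholm region; these are routine but require attention at the jumps of $\psi,\gamma,\delta$ and at the points $\pm1$, which is precisely where the constraints of Lemma \ref{l2.4} and Corollary \ref{c2.2} enter.
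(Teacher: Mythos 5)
Your argument is correct, and its first half is exactly the paper's: the same homotopy $c_s=t^{2n}\exp(s\gamma)$, justified by the fact that the intervals in (\ref{f.c31})--(\ref{f.c33}) contain $0$, together with constancy of the winding number under the deformation. Where you genuinely diverge is the index computation. The paper uses a single homotopy $a_\lambda=a\exp((1-\lambda)(-\gamma+\delta)/2)$, $b_\lambda=b\exp((1-\lambda)(\gamma+\delta)/2)$, which terminates at a pair $(a_0,b_0)=(\phi t^{n-m},\phi t^{-n-m})$ still carrying the symmetric factor $\phi=\tilde{\phi}\in GPC$; that factor is then removed algebraically via $T(\phi t^{n-m})+H(\phi t^{-n-m})=T(t^{-m})\big(T(\phi)+H(\phi)\big)T(t^{n})+{}$compact and the observation that $T(\phi)+H(\phi)$ is invertible with inverse $T(\phi\iv)+H(\phi\iv)$. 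You instead strip off the symmetric factor $\psi$ by a second homotopy $\exp(s\log\psi)$ through symmetric elements of $GPC$ (which fixes $c$ and $d$, hence preserves Fredholmness), landing at the pure monomial pair $(t^{n-m},t^{-(n+m)})$, where the Hankel summand is zero or finite rank and the index is read off from $T(t^{n-m})$. Both routes are sound; yours trades the operator identity and the invertibility of $T(\phi)+H(\phi)$ for the existence of a symmetric piecewise continuous branch of $\log\psi$ --- unproblematic, since $\T_+\cup\{\pm1\}$ is an arc and the symmetric extension imposes no compatibility condition at $\pm1$ (unlike the antisymmetric case of Lemma \ref{l2.4}) --- and is arguably more self-contained at the endgame. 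The identities you assert ($\tilde{\psi}=\psi$, $a_s\ta_s=b_s\tb_s$, and invariance of $c,d$ in stage one) all check out, though they deserve to be written down explicitly.
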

\begin{proof}
{}From Corollary \ref{c2.2} we know that $T(a)+H(b)$ is Fredholm.
Moreover, we can apply Lemma \ref{l2.4} and conclude that $c$ and $d$ can be written
in the form (\ref{f.c30}). 

Our first goal is to show that 
\be\label{f.35}
n=\wind (c^{\#,p}), \qquad
m=\wind (d^{\#,q}).
\ee
For this purpose we introduce (for a parameter $\lambda\in[0,1]$) the functions
$$
c_\lambda(t)=t^{2n} \exp(\lambda\gamma(t)),\qquad
d_\lambda(t)=t^{2m} \exp(\lambda\delta(t)).
$$
Clearly, $c_\lambda\tc_\lambda=d_\lambda\td_\lambda=1$ because $\gamma+\tilde{\gamma}=\delta+\tilde{\delta}=0$.
{}From (\ref{f.c31})--(\ref{f.c33}) it follows that the conditions  (\ref{f.c10})--(\ref{f.c12}) hold for the functions $c_\lambda$ and $d_\lambda$ instead of $c$ and $d$. For this reason winding numbers can be associated to $c_\lambda$ and $d_\lambda$ for each $\lambda\in [0,1]$. The mappings $\lambda\mapsto c_\lambda$ and $\lambda\mapsto d_\lambda$ can be thought of as continuous deformations. This implies that 
$\wind( c_\lambda^{\#,p})$ and $\wind(d_\lambda^{\#,q})$ do not depend on $\lambda$.
Put $\lambda=0$ and $\lambda=1$ to conclude that 
$$
\wind( c^{\#,p})=\wind((t^{2n})^{\#,p}),\qquad
\wind(d^{\#,q})=\wind((t^{2m})^{\#,q}).
$$
Finally, we have $\wind((t^{2n})^{\#,p})=n$ and $\wind((t^{2m})^{\#,q})=m$, which  implies (\ref{f.35}).
Notice that the winding number is taken with respect to $\T_+$ only.

In the second part we show that $\ind(T(a)+H(b))=m-n$ and we use essentially the same deformation argument.
Introduce, for $\lambda\in[0,1]$,
$$
a_\lambda= a \exp((1-\lambda)(-\gamma+\delta)/2),\qquad
b_\lambda= b \exp((1-\lambda)(\gamma+\delta)/2).
$$
Clearly, $a_1=a$, $b_1=b$, while for all $\lambda\in[0,1]$,
$a_\lambda\ta_\lambda=b_\lambda \tb_{\lambda}$ (because of $\gamma+\tilde{\gamma}=\delta+\tilde{\delta}=0$). We are therefore in a position to 
use Corollary \ref{c2.2} in order to decide whether $T(a_\lambda)+H(b_\lambda)$ is Fredholm.
The corresponding auxiliary functions evaluate to
$$
\frac{a_\lambda}{b_\lambda}=\frac{a}{b} \exp(-(1-\lambda)\gamma)=t^{2n} \exp(\lambda \gamma)=c_\lambda
$$
and
$$
\frac{\ta_\lambda}{b_\lambda}=\frac{\ta}{b}\exp(-(1-\lambda)\delta)=t^{2m} \exp(\lambda
\delta)=d_\lambda,
$$
where $c_\lambda$ and $d_\lambda$ were already defined above.
These functions satisfy (\ref{f.c10})--(\ref{f.c12}), and therefore
$T(a_\lambda)+H(b_\lambda)$ is Fredholm on $\Hp$ for $\lambda\in[0,1]$. Because the dependence on $\lambda$ is continuous, and the Fredholm index is stable under small perturbations, it follows that the index of $T(a_\lambda)+H(b_\lambda)$ does not depend on $\lambda$. Putting $\lambda=0$ and $\lambda=1$ we can conclude that
$$
\ind(T(a)+H(b))=\ind(T(a_0)+H(b_0)).
$$
Using the above equations, it follows that 
$$
\frac{a_0}{b_0}=t^{2n},\qquad \frac{\ta_0}{b_0}=t^{2m}.
$$
Making a substitution $a_0(t)=\phi(t) t^{n-m}$, $b_0(t)=\psi(t) t^{-n-m}$, it follows that 
$\phi=\psi=\tilde{\phi}$. Hence we obtain
\bq
T(a_0)+H(b_0) &=&
T(\phi t^{n-m})+H(\phi t^{-n-m})\\
&=&
T(t^{-m})\Big(T(\phi)+H(\phi)\Big)T(t^{n})+\mbox{ compact}
\eq
by using (\ref{Tab})--(\ref{Hab}) and the fact that Hankel operators with symbols $t^{\varkappa}$
are compact. The operator $T(\phi)+H(\phi)$ is invertible with inverse $T(\phi\iv)+H(\phi\iv)$
again by (\ref{Tab})--(\ref{Hab})  and because $\phi=\tilde{\phi}\in GPC$. The Fredholm indices 
of $T(t^n)$ and $T(t^{-m})$ are $-n$ and $m$, respectively. 
Hence it follows that $\ind(T(a)+H(b))=m-n$.
\end{proof}

An alternative index formula for $T(a)+H(b)$ in the special case $a=b$
was given in \cite{BE1,BE2}. Therein the index is related to the winding number of a curve which 
arises from the image of $a(t)$ as $t$ runs along all of $\T$. Instead of arcs certain cubic curves
have to be inserted. Moreover, the curve that arises from $a$ identifies at the same time the
essential spectrum of $T(a)+H(a)$.

Our index formula in the special case $a=b$ yields the trivial $c=1$ (with $\wind(c^{\#,p})=0$) and the auxiliary function $d=\tilde{a}/a$ upon which the computation of the index is based.  In general, the curves $c^{\#,p}$ and $d^{\#,q}$ do not seem to be related to the essential spectrum of $T(a)+H(b)$.

Recently, an index formula for Toeplitz plus Hankel operators $T(a)+H(b)$ acting on $\ell^p(\Z_+)$ for general piecewise continuous $a,b$, (i.e., without the condition $a\ta=b\tb$) was established by Roch and Silbermann \cite{RS2}. It involves a matrix valued symbol
instead of the two auxiliary functions $c,d$, and it is more difficult to state and to prove.


\section{Factorization results}
\label{s3}

The main goal of this section is to establish a factorization result for the auxiliary functions $c$ and $d$
under the assumption that $T(a)+H(b)$ is Fredholm on $\Hp$ with $a,b\in PC$, $a\ta=b\tb$.
The type of factorization is a kind of Wiener-Hopf factorization, which has appeared before in 
\cite{BE3} (see also \cite{E2}) and has been called antisymmetric factorization. The type of factorization applies to functions $\phi$ satisfying $\phi\tilde{\phi}=1$. The factorization is of the form $\phi(t)=\phi_+(t)t^{2\kappa}\phi_-(t)$ with the condition $\phi_-(t)=\tilde{\phi}\iv_+(t)$.
Because of this relationship between the factors, it was named ``antisymmetric''.

Before we start with the factorization result we will establish a (rather straightforward) auxiliary result, which shows that under the corresponding assumptions,
$c$ and $d$ can be written  as a product of the functions $u_{\tau,\beta}$ where the parameters $\beta$ satisfy a certain condition. The functions $u_{\tau,\beta}$ with $\tau\in\T$ and $\beta\in\C$
are defined by
\be\label{def.u}
u_{\tau,\beta}(\tau e^{ix}) = e^{i\beta(x-\pi)},\qquad 0<x<2\pi.
\ee
These are piecewise continuous functions with one jump at $\tau$ and $\beta$ determining its size.
Let us also mention that 
\be\label{u.p}
u_{\tau,\beta_1+\beta_2}=u_{\tau,\beta_1}u_{\tau,\beta_2}\quad \mbox{ and }\quad
u_{\tau,n}(t)= (-t/\tau)^n.
\ee

The product representation will also contain functions which are either continuous or for which the jump
discontinuities are small in a certain sense. To make this precise, consider $0< \eps\le 1/2$ and let
$\widehat{\cB}_\eps$ stand for the set of all $\phi\in GPC$ for which
\be\label{f.arg}
\left|\frac{1}{2\pi}\arg \frac{\phi^-(\tau)}{\phi^+(\tau)}\right| < \eps
\ee
for each $\tau\in\T$. For $\eps=0$ we put $\widehat{\cB}_0=GC(\T)$, the set of all non-zero continuous functions on $\T$.
The condition $\phi\in\widehat{\cB}_\eps$, $\eps\in[0,1/2]$, implies that for each $\tau\in\T$ one can find a half-plane
$H_{a}=\{z\in\C\,:\, \Re(z/a)>0\,\}$, $|a|=1$, and a neighborhood $U\subseteq\T$ of $\tau$ such that 
$\phi(t)\in H_a$ for each $t\in U$. Using a compactness argument, it is possible to define a winding number for such a function $\phi$. 

Finally, for $\eps\in[0,1/2]$, let $\cB_\eps$ stand for the set of 
all $\phi\in\widehat{\cB}_\eps$ which have winding number zero, which are continuous at $\tau=\pm1$,
and satisfy $\phi(\pm1)=1$ and $\phi\tilde{\phi}=1$.

\begin{proposition}\label{p2.4}
Let $a,b\in GPC$  satisfy  $a\ta=b\tb$, and let $1<p<\iy$, $1/p+1/q=1$. Put
\be\label{f.c9b}
c=\frac{a}{b}=\frac{\tb}{\ta},\qquad d=\frac{\ta}{b}=\frac{\tb}{a},
\ee
and assume that $T(a)+H(b)$ is Fredholm on $H^p(\T)$. Then for 
each $0<\eps\le 1/2$, the functions $c$ and $d$ can be represented in the form
\bq\label{f.13b}
c &=& t^{2n}\cdot  c_0 \cdot u_{1, 2\gamma^+}\cdot   u_{-1,2\gamma^-}
\prod_{r=1}^R u_{\tau_r,\gamma_r} \cdot  u_{\bt_r,\gamma_r},
\\    \label{f.14b}
d &=& t^{2m} \cdot  d_0 \cdot u_{1, 2\delta^+} \cdot  u_{-1,2\delta^-}
\prod_{r=1}^R u_{\tau_r,\delta_r}\cdot   u_{\bt_r,\delta_r},
\eq
with parameters satisfying 
\begin{enumerate}
\item[(i)] $-1/2q<\Re\gamma^+<1/2+1/2p$, $-1/2p<\Re \delta^+<1/2+1/2q$, 
\item[(ii)] $-1/2-1/2q<\Re\gamma^-<1/2p$, $-1/2-1/2p<\Re \delta^-<1/2q$, 
\item[(iii)] $-1/q<\Re\gamma_r<1/p$, $-1/p<\Re \delta_r<1/q$, 
\end{enumerate}
and where $n,m\in\Z$ and $c_0,d_0\in\cB_\eps$. Therein $\tau_1,\dots,\tau_R\in\T_+$ are distinct. 

If $c,d$ have only finitely many jump discontinuities, then there exist representations with $c_0,d_0\in \cB_0$.
\end{proposition}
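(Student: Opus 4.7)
The plan is to apply Lemma~\ref{l2.4} to write $c=t^{2n}\exp(\gamma)$ and $d=t^{2m}\exp(\delta)$ with $\gamma+\tilde{\gamma}=\delta+\tilde{\delta}=0$ satisfying (\ref{f.c31})--(\ref{f.c33}), and then to extract the jumps of $\exp(\gamma)$ and $\exp(\delta)$ one by one using the functions $u_{\tau,\beta}$.

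First I select the jump set. Since $c,d\in PC$, each of $\gamma,\delta$ has at most countably many jumps, but only finitely many of magnitude exceeding the threshold $\eps$ in the sense of (\ref{f.arg}). Let $\tau_1,\dots,\tau_R\in\T_+$ enumerate these (the points $\pm1$ will be treated separately, even when $c$ or $d$ is continuous there). Using that $u_{\tau,\beta}^+(\tau)/u_{\tau,\beta}^-(\tau)=e^{-2\pi i\beta}$, I determine $\gamma_r$ and $\delta_r$ by matching $u_{\tau_r,\gamma_r}$ and $u_{\tau_r,\delta_r}$ to the jumps of $\exp(\gamma)$ and $\exp(\delta)$ at $\tau_r$; this fixes the parameters modulo $\Z$, and the length-one interval in (iii) singles out a unique representative, which is equivalent to condition (\ref{f.c32}) via Lemma~\ref{l2.4}. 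At $\tau=1$, the identity $\gamma^-(1)=-\gamma^+(1)$ makes the jump of $\exp(\gamma)$ equal to $e^{2\gamma^+(1)}$, and matching to $u_{1,2\gamma^+}$ determines $\gamma^+$ modulo $\tfrac12$; the admissible interval in (i) together with the winding condition discussed below resolves the remaining ambiguity. The construction at $-1$ and the parallel one for $\delta$ are analogous.

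Set
$$
c_0 \;=\; c\cdot t^{-2n}\cdot u_{1,2\gamma^+}^{-1}\,u_{-1,2\gamma^-}^{-1}\prod_{r=1}^R u_{\tau_r,\gamma_r}^{-1}\,u_{\bt_r,\gamma_r}^{-1},
$$
and define $d_0$ analogously. It remains to verify $c_0,d_0\in\cB_\eps$. The identity $c_0\tilde{c}_0=1$ follows from the antisymmetry of each extracted factor: a direct computation from (\ref{def.u}) yields $\tilde{u}_{\tau,\beta}=u_{\bar\tau,-\beta}$ (whence $\tilde{u}_{\pm1,\beta}=u_{\pm1,\beta}^{-1}$), so each pair $u_{\tau_r,\gamma_r}u_{\bt_r,\gamma_r}$ is self-inverse under $\,\widetilde{\;\cdot\;}\,$. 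Continuity of $c_0$ at $\pm1$ with value $1$ is enforced by the precise branch choice of $\gamma^\pm$, and the remaining (non-extracted) jumps of $c_0$ have magnitude less than $\eps$ by the choice of $R$, so $c_0\in\widehat{\cB}_\eps$.

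The subtlest step is showing that $\wind(c_0)=0$. For this I use a homotopy argument in the spirit of the second half of Theorem~\ref{t2.5}: scaling each extracted exponent by $\lambda\in[0,1]$ deforms the residual continuously within $\widehat{\cB}_\eps$, and the winding is invariant under such a deformation; at $\lambda=0$ the residual reduces to a manifestly winding-zero function, which both fixes the value of $n$ and resolves the $\tfrac12$-integer ambiguity in $\gamma^\pm$. The analogous argument applies to $d_0$. Finally, if $c$ and $d$ have only finitely many jumps, choosing $R$ so that all of them are extracted makes the residuals continuous on $\T$, hence in $\cB_0=GC(\T)$. The main obstacle here is the winding-number bookkeeping --- reconciling the $n$ from Lemma~\ref{l2.4} with the $n$ appearing in the proposed factorization --- since all other properties reduce to routine verifications; the case of infinitely many jumps introduces no additional difficulty because only finitely many can be of size $\ge\eps$.
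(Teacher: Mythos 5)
Your overall route --- invoking Lemma \ref{l2.4} first and then peeling off the jumps with the functions $u_{\tau,\beta}$ --- is legitimate, and is in fact the alternative the authors themselves flag in the remark following their proof; the jump-matching bookkeeping (the same parameter $\gamma_r$ at $\tau_r$ and at $\bt_r$, the length-one intervals pinning the parameters down, and $c_0\tilde{c}_0=1$ via $\tilde{u}_{\tau,\beta}=u_{\bt,-\beta}$) is essentially correct. The genuine gap is in the step you yourself single out as the subtlest: the proof that $\wind(c_0)=0$. The homotopy obtained by scaling the extracted exponents, $\lambda\mapsto c\,t^{-2n}\,u_{1,2\lambda\gamma^+}^{-1}\cdots$, does \emph{not} stay inside $\widehat{\cB}_\eps$: at an extracted point $\tau_r$ the residual has jump ratio $e^{2\pi i(1-\lambda)\gamma_r}$, which for intermediate $\lambda$ is of intermediate size and in general violates (\ref{f.arg}); since $\Re\gamma_r$ may be close to $\pm1$, the path can even pass through functions whose jump ratio is $-1$, where the half-plane winding number of Section \ref{s3} is undefined. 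Moreover, at $\lambda=0$ the residual is $e^{\gamma}$, which still carries all the original large jumps of $c$ and is not ``manifestly winding-zero'': the branch $\Im\gamma$ of the argument need not coincide with the small-jump branch used to define $\wind$ on $\widehat{\cB}_\eps$, so ``the exponential of a single-valued $PC$ function has winding zero'' is not available in the sense required. Both the invariance claim and the endpoint evaluation are therefore unjustified as stated.

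The repair is to not pre-assign $n$ from Lemma \ref{l2.4} at all. Form the residual $c_1 = c\cdot\bigl(u_{1,2\gamma^+}u_{-1,2\gamma^-}\prod_r u_{\tau_r,\gamma_r}u_{\bt_r,\gamma_r}\bigr)^{-1}$ \emph{without} any power of $t$. Then $c_1\in\widehat{\cB}_\eps$, $c_1\tilde{c}_1=1$, and $c_1(\pm1)=1$; the last two facts force the increment of $(2\pi)^{-1}\arg c_1(e^{i\theta})$ over $0\le\theta\le\pi$ to be an integer $n$ and the increment over the lower half-circle to be the same $n$, so $\wind(c_1)=2n$ is automatically even, and one defines $c_0:=c_1t^{-2n}\in\cB_\eps$. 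This is how the paper closes the argument, and it also resolves your ``modulo $\tfrac12$'' ambiguity at $\pm1$ cleanly: matching the one-sided limit $c^-(1)=e^{2\pi i\gamma^+}$ (rather than only the jump ratio) determines $\gamma^+$ modulo $1$, and the length-one intervals in (i)--(ii) then fix it outright, which is exactly the condition $c_1(\pm1)=1$. If you then want to identify this $n$ with the one produced by Lemma \ref{l2.4}, that follows a posteriori from the uniqueness statement in that lemma, not from a deformation.
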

\begin{proof}
Given $0<\eps\le 1/2$ we consider the set $S$ of all $\tau\in\T\setminus\{-1,1\}$ for which
(\ref{f.arg}) is not satisfied for $c$ or $d$. The set $S$ is finite. If $c$ and $d$ have only finitely many jump discontinuities, we take $S$ as the set of all jump discontinuities of $c$ and $d$ on $\T\setminus\{-1,1\}$.

Because $c\tc=d\td=1$, we have $S=\{\bt\,:\,\tau\in S\}$. We enumerate the elements in $S$
as follows,
$$
S=\{\tau_1,\dots,\tau_R\}\cup \{\bt_1,\dots,\bt_R\},
$$
stipulating that all $\tau_r\in \T_+$ are distinct.  Up to change of order, the $\tau_r$'s are
uniquely determined. Because of the assumption of Fredholmness, conditions (\ref{f.c10})--(\ref{f.c12}) are satisfied, and hence we can uniquely choose
parameters $\gamma$ and $\delta$ such that conditions (i)--(iii) of this proposition are fulfilled
and such that 
\begin{equation}
\ba{rclcrcl}
c^-(1) & = & e^{2\pi i \gamma^+},&\qquad&
d^-(1) & = & e^{2\pi i \delta^+},\\[1ex]
c^-(-1) & = & e^{2\pi i \gamma^-},&&
d^-(-1) & =& e^{2\pi i \delta^-},
\\[1ex]
\ds
\frac{c^-(\tau_r)}{c^+(\tau_r)} &=& e^{2 \pi i \gamma_r}, &&
\ds
\frac{d^-(\tau_r)}{d^+(\tau_r)} &=& e^{2 \pi i \delta_r},
\ea
\end{equation}
for all $1\le r \le R$.
Notice that conditions (i)--(iii) require that the real parts of the parameters belong to a certain open interval of length one. That the end-points cannot be attained follows from the Fredholm conditions.

Now define
\bq
c _1 &=& c \cdot\Big( u_{1, 2\gamma^+}\cdot   u_{-1,2\gamma^-}
\prod_{r=1}^R u_{\tau_r,\gamma_r} \cdot  u_{\bt_r,\gamma_r},
\Big)\iv,
\\
d_1 &=& d  \cdot\Big( u_{1, 2\delta^+} \cdot  u_{-1,2\delta^-}
\prod_{r=1}^R u_{\tau_r,\delta_r}\cdot   u_{\bt_r,\delta_r}\Big)\iv.
\eq
The functions $c_1$ and $d_1$ are continuous on $S\cup\{1,-1\}$. 
Moreover, $c_1(\pm 1)=d_1(\pm 1)=1$.
This follows from the computation of the one-sided limits of the functions $u_{\tau,\beta}$,
\be\label{u.pm}
u_{\tau,\beta}^{\mp}(\tau)=e^{\pm \pi i\beta},\qquad \frac{u_{\tau,\beta}^-(\tau)}{u_{\tau,\beta}^+(\tau)}=e^{2\pi i \beta}.
\ee
Because $c\tilde{c}=d\tilde{d}=1$ and $\tilde{u}_{\tau,\beta}=u_{\bt,-\beta}$, we obtain
$c_1\tilde{c}_1=d_1\tilde{d}_1=1$. In particular, the continuity of $c_1, d_1$ at $\tau_r$ implies continuity at $\bt_r$. The size of the jumps at the other points remains unchanged. Hence
$c_1,d_1$ have only small jumps, i.e., they belong to $\widehat{\cB}_\eps$. In case $c,d$ had only finitely many jumps (and with appropriate $S$),  we obtain $c_1,d_1\in\widehat{\cB}_0$.

Now the only issue that may prevent $c_1$ and $d_1$ to belong to $\cB_\eps$ (or $\cB_0$) is a non-trivial winding number. Because $c_1(1)=c_1(-1)=1$, there exists an $n\in\Z$ such that $(2\pi)\iv \arg c_1(e^{i\theta})$  has an integer increment $n$ when $\theta$ runs from $0$ to $\pi$.
Consequently, the increment of $(2\pi)\iv \arg c_1(e^{-i\theta})$ for $\theta$ from $0$ to $\pi$ is $-n$.
Hence the winding number of $c_1$ is the even number $2n$. Now we define $c_0(t)=c_1(t)t^{-2n}$, which implies that $c_0\in \cB_\eps$ (or $\cB_0$). We argue similarly for $d_0$.
\end{proof}

Let us remark that the proof of the previous proposition could have also been accomplished using Lemma \ref{l2.4}.
We preferred to give the proof involving the functions $u_{\tau,\beta}$ because the construction
is probably the one which one would use in practice. 

Furthermore, the appearance of the exponent ``$2n$'' rather that ``$n$'' for the winding number in the representations (\ref{f.13b}) and (\ref{f.14b}) might be surprising,  but it is natural and necessary. For illustration and a partial explanation we add the following examples.

\begin{example}\em
Let us discuss which representations (\ref{f.13b})  will be obtained if the function $c$ is continuous. It turns out that one has to distinguish four cases. Since $c\tc=1$, it follows $c(1)=\pm1$ and $c(-1)=\pm1$. Notice that $u_{1,1}(t)=-t$ and 
$u_{-1,-1}(t)=t\iv$ and recall (\ref{u.p}).
\begin{enumerate}
\item[(1)]
If $c(1)=c(-1)=1$, then one can write $c(t)=t^{2n} c_0(t)$ with $c_0\in \cB_0$. Here $n$ equals to the winding number of $c(t)$ when $t$ passes along $\T_+$.
\item[(2)]
If $c(1)=c(-1)=-1$, then one can write $c(t)=t^{2n} c_0(t) u_{1,1}(t)u_{-1,-1}(t)$ with $c_0\in \cB_0$. 
Thus $\gamma^+=1/2$ and $\gamma^-=-1/2$.
\item[(3)]
If $c(1)=-1$, $c(-1)=1$, then one can write $c(t)=t^{2n} c_0(t) u_{1,1}(t)$ with $c_0\in \cB_0$. 
Thus $\gamma^+=1/2$.
\item[(4)]
If $c(1)=1$, $c(-1)=-1$, then one can write $c(t)=t^{2n} c_0(t) u_{-1,-1}(t)$ with $c_0\in \cB_0$. 
Thus  $\gamma^-=-1/2$.
\end{enumerate}
We see in this example that even if the continuous function $c$ has an odd winding number, an even exponent ``$2n$'' will appear in the representation.
\qed
\end{example}

\begin{example}\em
Let us reconsider Example \ref{ex.1} with the function $c$ defined in (\ref{fct.c}).
Computation of the sizes of the jumps yields
$$
c^-(1)=e^{-\pi i/4},\qquad
\frac{c^-(i)}{c^+(i)}=e^{-\pi i/4},
$$
from which we obtain a preliminary representation
$$
c(t)= c_1(t) u_{1,-1/4}(t)u_{i,-1/8}(t) u_{-i,-1/8}(t)
$$
with continuous $c_1$ which evaluates to $c_1(t)=t$. Using $u_{-1,1}(t)=t$, we can write
$$
c(t)= u_{1,-1/4}(t)  u_{-1,1}(t)u_{i,-1/8}(t) u_{-i,-1/8}(t)
$$
with $\gamma^+=-1/8$, $\gamma^-=1/2$, and $\gamma_1=-1/8$, $\tau_1=i$. Unfortunately, condition (ii) in Proposition \ref{p2.4} is never fulfilled and thus we need further modifications using (\ref{u.p}).
We obtain
$$
c(t)= t^{2}u_{1,-1/4}(t)  u_{-1,-1}(t)u_{i,-1/8}(t) u_{-i,-1/8}(t),
$$
i.e., $\gamma^+=-1/8$, $\gamma^-=-1/2$, $\gamma_1=-1/8$, $n=1$, which is the proper representation
for $p>4/3$. Moreover, we have
$$
c(t)= u_{1,7/4}(t)  u_{-1,-1}(t)u_{i,-1/8}(t) u_{-i,-1/8}(t),
$$
i.e., $\gamma^+=7/8$, $\gamma^-=-1/2$, $\gamma_1=-1/8$, $n=0$, for $8/7<p<4/3$, and
$$
c(t)=t^{-2} u_{1,7/4}(t)  u_{-1,-1}(t)u_{i,7/8}(t) u_{-i,7/8}(t),
$$
i.e., $\gamma^+=7/8$, $\gamma^-=-1/2$, $\gamma_1=7/8$, $n=-1$, for $1<p<8/7$.
In the cases $p=4/3$ and $p=8/7$ the Fredholm conditions do not hold (see (\ref{f.c10})--(\ref{f.c12})).
\qed
\end{example}

Now we want to continue with establishing the actual factorization result, which will be based on the
product representation (\ref{f.13b}) and (\ref{f.14b}). The factorization of the terms $u_{\tau,\beta}$ 
is given by
\bq\label{u.etaxi}
u_{\tau,\beta} &=& \eta_{\tau,\beta}\cdot  \xi_{\tau,-\beta},
\eq
where 
\be
\eta_{\tau,\beta}(t) = (1-t/\tau)^{\beta},\qquad
\xi_{\tau,\beta}(t) = (1-\tau/t)^{\beta}, \qquad t\in\T\setminus\{\tau\}.
\ee
Here the branches of the power functions are chosen in such a way that 
$\eta_{\tau,\beta}(z)$ and $\xi_{\tau,\beta}(z)$ extend by analyticity into $\{z\in\C\,:\,|z|<1\}$ and $\{z\in\C\,:\,|z|>1\}\cup\{\iy\}$, respectively, with $\eta_{\tau,\beta}(0)=\xi_{\tau,\beta}(\iy)=1$.

As an auxiliary result we need the factorization of $c_0,d_0\in\cB_\eps$.

\begin{lemma}\label{l.1}
Let $\phi\in \widehat{\cB}_\eps$, $0<\eps\le 1/2$, and assume $\wind(\phi)=0$. Then we can factor
$\phi=\phi_-\phi_+$ such that 
$$
\phi_-,\phi_-\iv \in \ovl{H^\sigma(\T)},\quad \phi_+,\phi_+\iv \in H^\sigma(\T)
$$
with $\sigma=1/\eps$. If, in fact, $\phi \in \cB_\eps$, then we can choose the factors
such that $\phi_-=\tilde{\phi}_+\iv$. 
\end{lemma}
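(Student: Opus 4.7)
The plan is to define $\phi_\pm$ via the Riesz projection of a bounded branch of $\log\phi$, then establish the Hardy-space membership by an explicit decomposition into elementary singular factors. First I would observe that, since $\phi\in\widehat{\cB}_\eps\subset GPC$ has $\wind\phi=0$ and all jumps of size at most $2\pi\eps$, a bounded single-valued branch $g=\log\phi\in L^\iy(\T)$ exists as a PC function with imaginary jumps satisfying $|\Im g^-(\tau)-\Im g^+(\tau)|<2\pi\eps$ at every $\tau$. I would then set $g_+=Pg$ and $g_-=(I-P)g$, tentatively define $\phi_\pm=\exp(g_\pm)$, and note $\phi=\phi_-\phi_+$ at the level of functions; what remains is to show $\phi_+,\phi_+\iv\in H^\sigma$ and $\phi_-,\phi_-\iv\in\ovl{H^\sigma}$ for $\sigma=1/\eps$.

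Next I would verify the Hardy-space membership by an explicit construction analogous to Proposition \ref{p2.4}: decompose $\phi$ as $\phi_0\prod_k u_{\tau_k,\beta_k}$, with $|\Re\beta_k|<\eps$ and $\phi_0$ a continuous function of winding zero, by peeling off each jump through multiplication by a compensating $u_{\tau_k,-\beta_k}$. Each elementary factor then splits via (\ref{u.etaxi}) as $u_{\tau,\beta}=\eta_{\tau,\beta}\xi_{\tau,-\beta}$, and an inspection of the power-type behaviour $(1-t/\tau)^\beta$ at $\tau$ shows that $\eta_{\tau,\beta}^{\pm 1}\in H^\sigma$ and $\xi_{\tau,-\beta}^{\pm 1}\in\ovl{H^\sigma}$ iff $|\Re\beta|<1/\sigma=\eps$, which is exactly our hypothesis. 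The continuous winding-zero remainder $\phi_0$ factors classically via outer functions in $H^\iy\subset H^\sigma$. Collecting the analytic and anti-analytic parts produces the required $\phi_\pm$, and by uniqueness of the canonical factorization (up to a multiplicative constant) this factorization coincides, after normalization, with the one defined via Riesz projection.

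For the antisymmetric case $\phi\in\cB_\eps$, I would exploit the relation $\phi\tilde\phi=1$ together with $\phi(\pm1)=1$: choosing the branch $g=\log\phi$ with $g(1)=0$ forces $g+\tilde g=0$, equivalently $g_n=-g_{-n}$ for every Fourier coefficient, and in particular $g_0=0$. A direct Fourier computation then yields $\widetilde{Pg}=-(I-P)g$, so that $\tilde\phi_+=\exp(\widetilde{Pg})=\exp(-g_-)=\phi_-\iv$, giving $\phi_-=\tilde\phi_+\iv$ as required. The hard part is the Hardy-space membership in the general step: boundedness of $P$ on $L^\sigma$ only yields $g_\pm\in L^\sigma$, and exponentials of $L^\sigma$ functions need not be integrable. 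The sharp threshold $\sigma=1/\eps$ is dictated by the elementary-factor analysis, where $\eta_{\tau,\beta}$ is $\sigma$-integrable near $\tau$ precisely when $\Re\beta>-1/\sigma$; this is exactly the reason $\widehat{\cB}_\eps$ is defined with this bound on the jump size.
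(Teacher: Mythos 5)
Your route is genuinely different from the paper's: the paper factors $\phi$ by first showing that $T(\phi)$ is invertible on $H^\sigma(\T)$ and on $H^{\hat\sigma}(\T)$ (via the classical Fredholm criterion for piecewise continuous Toeplitz symbols plus Coburn's lemma) and then invoking the theorem that invertibility of $T(\phi)$ on $\Hp$ yields a canonical Wiener--Hopf factorization in $\Lp$; your proposal instead tries to build the factors by hand from elementary pieces and the Riesz projection of $\log\phi$. Unfortunately the hand-built construction has a genuine gap: a function in $\widehat{\cB}_\eps$ may have \emph{infinitely many} jump discontinuities (only the jumps exceeding any fixed threshold are finite in number), so the decomposition $\phi=\phi_0\prod_k u_{\tau_k,\beta_k}$ with $\phi_0$ continuous and finitely many elementary factors simply does not exist in general. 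This is not a corner case one can ignore: the lemma is applied to the remainders $c_0,d_0\in\cB_\eps$ produced by Proposition \ref{p2.4}, which are guaranteed to lie in $\cB_0$ only when $c,d$ themselves have finitely many jumps. Peeling off the large jumps leaves a remainder that is still only in $\widehat{\cB}_{\eps'}$, so the problem recurs; to close the argument along your lines one would need a John--Nirenberg/Helson--Szeg\H{o} type estimate showing that $\exp(Pg_2)\in H^\sigma(\T)$ when $g_2$ has sufficiently small oscillation, with constants sharp enough to reach exactly $\sigma=1/\eps$ --- which is essentially the content of the operator-theoretic results the paper cites, not something that follows from inspecting $(1-t/\tau)^\beta$.

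A second, smaller error: the claim that the continuous winding-zero remainder $\phi_0$ ``factors classically via outer functions in $H^\iy(\T)$'' is false, and the paper explicitly points this out in the remark after the lemma --- the Wiener--Hopf factors of a nonvanishing continuous function of winding number zero need not be bounded; they lie only in $\bigcap_{\sigma<\iy}H^\sigma(\T)$. That weaker statement suffices here, but it needs the BMO/VMO argument, not outer-function boundedness. On the positive side, your treatment of the antisymmetric case is attractive: choosing an odd logarithm ($g+\tilde g=0$, so $g_0=0$ and $\widetilde{Pg}=-Qg$) yields $\phi_-=\tilde{\phi}_+\iv$ directly, whereas the paper only gets $\phi_-=\gamma\tilde{\phi}_+\iv$ with $\gamma=\pm1$ and must run a square-root trick ($\psi=\phi^{1/2}\in\cB_{\eps/2}$) to exclude $\gamma=-1$. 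But as written this shortcut rests on identifying $\exp(Pg)$ with the canonical factor, which again presupposes the unresolved integrability of $\exp(Pg)$ in the general (infinitely-many-jumps) case.
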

\begin{proof}
It is well known that a Toeplitz operator $T(\phi)$ with $\phi\in GPC$ is Fredholm on $\Hp$ if and only if 
$(2\pi)\iv \arg(\phi^-(\tau)/\phi^+(\tau))$ lies in the interval $(-1/q,1/p)$ for each $\tau\in\T$, where 
$1/p+1/q=1$ (see, e.g.,  \cite[Sec.~5.36-39]{BS1} or \cite{GK,LS}).
Since $\phi\in\widehat{\cB}_\eps$, this condition holds for $p=\sigma$ as well as for
$p=\hat{\sigma}$, where $1/\hat{\sigma}:=1-1/\sigma$, noting  that $\sigma\ge2$.
It follows that $T(\phi)$ is Fredholm on $H^\sigma(\T)$ as well as on $H^{\hat{\sigma}}(\T)$.
The winding number condition implies (for instance, with the help of a continuous deformation of $\phi$ to a constant, i.e., consider $\lambda\in[0,1]\mapsto (\phi(t))^\lambda\in\widehat{\cB}_{\eps}$) that $T(\phi)$ has Fredholm index zero in both cases. By Coburn's Lemma (see, e.g., \cite[Thm.~2.38]{BS1})
$T(\phi)$ is invertible on $H^\sigma(\T)$ as well as on $H^{\hat{\sigma}}(\T)$.

It is well known (see, e.g., \cite{GK,LS,S}, or \cite{W} in the case $p=2$) that if a Toeplitz operator is invertible in $\Hp$, then its symbol admits
a canonical Wiener-Hopf factorization in $\Lp$. The latter means that $\phi=\phi_-\phi_+$ with
$\phi_-\in \ovl{\Hp}$, $\phi_-\iv\in\ovl{\Hq}$, $\phi_+\in\Hq$, $\phi_+\iv\in \Hp$. 

Now we apply the previous result with $p=\sigma$ and $p=\hat{\sigma}$, and conclude that 
$\phi$ admits two (possibly different) factorizations $\phi=\phi_-^{(1)}\phi_+^{(1)}=\phi_-^{(2)}\phi_+^{(2)}$ with factors satisfying the conditions $\phi_-^{(1)}\in \ovl{H^\sigma(\T)}, (\phi_+^{(1)})\iv\in H^{\sigma}(\T)$, $(\phi_-^{(2)})\iv\in \ovl{H^\sigma(\T)}, \phi_+^{(2)}\in H^{\sigma}(\T)$,  
noting $1/\sigma+1/\hat{\sigma}=1$.
Using the standard trick and writing $(\phi_-^{(2)})\iv \phi_-^{(1)}=\phi_+^{(2)}(\phi_+^{(1)})\iv\in L^1(\T)$ (because $\sigma\ge2$), it is easily seen that the factors differ by at most a (nonzero) constant. This proves the first part of the lemma.
  
Next assume $\phi\in\cB_\eps$, i.e., $\phi=\tilde{\phi}\iv$.
If $\phi=\phi_-\phi_+$ is a factorization, then $\phi=\tilde{\phi}_+\iv \tilde{\phi}_-\iv$ is another factoriziation. By the same standard trick as above we conclude that the factors in the two factorizations are related by a nonzero constant $\gamma$,  i.e., $\phi_-=\gamma\tilde{\phi}_+\iv$. We obtain
$\phi=\gamma\phi_+\tilde{\phi}_+\iv$. Employing $\phi=\tilde{\phi}\iv$ again, it follows that 
$\gamma=\pm1$. It remains to argue that $\gamma=1$.

Because $\phi\in \cB_\eps$ has winding number zero and $\phi(\pm1)=1$, we can define
define $\psi=\phi^{1/2}\in\cB_{\eps/2}$. Now apply what we have shown so far to $\psi$ instead of $\phi$
and conclude that we can factor $\psi=\gamma'\psi_+ \tilde{\psi}_+\iv$ with 
$\gamma'=\pm1$ and $(\psi_+)^{\pm1}\in H^{2\sigma}(\T)$. Take the square to arrive at 
$$
\phi=\gamma\phi_+\tilde{\phi}_+\iv=(\psi_+)^2(\tilde{\psi}_+)^{-2}.
$$
Because $(\psi_+)^{\pm 2}\in H^\sigma(\T)$  it follows that the factors are the same up to a nonzero constant,
say, $\phi_+=\beta (\psi_+)^2$. But then $\gamma=1$, which is what we wanted.
This concludes the proof of  the lemma.
\end{proof}

Some remarks are in order. The statement of the lemma does not hold in the case $\eps=0$, i.e., 
when $\phi$ is a nonvanishing  continuous function with winding number zero. It is a well known fact that the Wiener-Hopf factors of such a function need not be bounded. However, if $\phi$ is in 
$\wh{\cB}_0$ (or in $\cB_0$), then 
$$
\phi_+,\phi_+\iv \in \bigcap_{\sigma>0} H^\sigma(\T),
$$
and similarly for $\phi_-$. On the other hand, if $\phi$ is sufficiently smooth, say it belongs to the Wiener class, then we can write down the formula
$$
\phi_+(t) =\exp\Big( \sum_{k=1}^\iy t^k [\log \phi]_k\Big),
$$
where $[\cdots]_k$ stands for the $k$-th Fourier coefficient. Notice that if $\phi\tilde{\phi}=1$ and $\phi$
has winding number zero, then $\log \phi$ is an odd function.

We are now able to present the desired factorization for piecewise continuous symbols.

\begin{theorem}\label{t3.3x}
Let $a,b\in GPC$  satisfy  $a\ta=b\tb$, and let $1<p<\iy$, $1/p+1/q=1$. Put
\be\label{f.c9c}
c=\frac{a}{b}=\frac{\tb}{\ta},\qquad d=\frac{\ta}{b}=\frac{\tb}{a},
\ee
Assume that $T(a)+H(b)$ is Fredholm on $H^p(\T)$. Then there exist factorizations
\be\label{f.f1}
c(t)=c_+(t) t^{2n} c_+\iv(t\iv),
\qquad
d(t)=d_+(t) t^{2m} d_+\iv(t\iv),
\ee
with $n,m\in\Z$  and
\be\label{f.f2}
(1+t)c_+(t) \in \Hq,\qquad  (1-t) c_+\iv(t)\in \Hp,
\ee
\be\label{f.f3}
(1+t) d_+(t)\in \Hp,\qquad
(1-t)d_+\iv(t) \in \Hq.
\ee
\end{theorem}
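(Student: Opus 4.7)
The plan is to construct $c_+$ and $d_+$ by combining the product representation of Proposition \ref{p2.4} with the antisymmetric factorization of Lemma \ref{l.1}, and then using the $\eta/\xi$ decomposition (\ref{u.etaxi}) to split each jump factor. First fix $\eps\in(0,1/2]$ small enough that $\sigma:=1/\eps\ge\max(p,q)$. Proposition \ref{p2.4} yields
$$
c = t^{2n}\, c_0\, u_{1,2\gamma^+}\, u_{-1,2\gamma^-}\prod_{r=1}^{R} u_{\tau_r,\gamma_r}\, u_{\bt_r,\gamma_r}
$$
with $c_0\in\cB_\eps$ and parameters $\gamma^\pm,\gamma_r$ in the open intervals (i)--(iii), and an analogous representation for $d$ with parameters $\delta^\pm,\delta_r,m$. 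Lemma \ref{l.1} applied to $c_0\in\cB_\eps$ then provides $c_0 = c_{0,+}\,\wt{c_{0,+}}\iv$ with $c_{0,+}^{\pm1}\in H^\sigma(\T)$, and similarly for $d_0$.

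The algebraic key is that (\ref{u.etaxi}) combined with the identity $\xi_{\tau,\beta}=\wt{\eta_{\bar\tau,\beta}}$ (valid on $\T$ since $\bar\tau=\tau\iv$) gives
$$
u_{\pm 1,2\gamma^\pm} = \eta_{\pm 1,2\gamma^\pm}\,\wt{\eta_{\pm 1,2\gamma^\pm}}\iv,\qquad
u_{\tau_r,\gamma_r}\,u_{\bt_r,\gamma_r} = \bigl(\eta_{\tau_r,\gamma_r}\eta_{\bt_r,\gamma_r}\bigr)\,\wt{\bigl(\eta_{\tau_r,\gamma_r}\eta_{\bt_r,\gamma_r}\bigr)}\iv.
$$
Defining
$$
c_+(t) := c_{0,+}(t)\,\eta_{1,2\gamma^+}(t)\,\eta_{-1,2\gamma^-}(t)\prod_{r=1}^{R}\eta_{\tau_r,\gamma_r}(t)\,\eta_{\bt_r,\gamma_r}(t),
$$
and $d_+$ analogously, substitution into the product representation collects the ``$\eta$'' terms into $c_+$ and the ``$\wt{\eta}\iv$'' terms into $\wt{c_+}\iv$. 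This yields $c = c_+\,t^{2n}\,\wt{c_+}\iv$, which is exactly (\ref{f.f1}) because $\wt{c_+}\iv(t)=c_+\iv(t\iv)$.

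It remains to verify (\ref{f.f2})--(\ref{f.f3}). Each $\eta_{\tau,\beta}$ is analytic in the open disk, so $c_+$ and $c_+\iv$ are analytic there, and their boundary values have only power-type singularities at $1,-1,\tau_r,\bt_r$. Using $|\eta_{\tau,\beta}(t)|\asymp|t-\tau|^{\Re\beta}$ near $\tau$, and noting that the factor $1+t$ raises the exponent at $t=-1$ by one while $1-t$ does the same at $t=1$, the condition $(1+t)c_+\in\Hq$ reduces to the lower bounds in (i)--(iii) of Proposition \ref{p2.4}, and $(1-t)c_+\iv\in\Hp$ reduces to the upper bounds. The continuous factor $c_{0,+}^{\pm 1}$ lies in $H^\sigma\subseteq \Hp\cap\Hq$ by the choice of $\eps$. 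The argument for $d$ is identical with $p$ and $q$ interchanged, which precisely mirrors the swap between the $\gamma$- and $\delta$-constraints.

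The only genuine obstacle is bookkeeping: matching each of the six Fredholm exponent bounds in Proposition \ref{p2.4} to its counterpart integrability threshold and tracking the unit shift at $\pm 1$ introduced by the weights $1\pm t$. Once Proposition \ref{p2.4} and Lemma \ref{l.1} are in place, the proof is essentially a direct computation based on (\ref{u.etaxi}).
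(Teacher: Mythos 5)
Your construction of $c_+$ and $d_+$ is exactly the one the paper uses: Proposition \ref{p2.4} for the product representation, Lemma \ref{l.1} for the antisymmetric factorization of $c_0$, and the splitting $u_{\tau,\beta}=\eta_{\tau,\beta}\xi_{\tau,-\beta}$ together with $\tilde{\eta}_{\tau,\beta}=\xi_{\bt,\beta}$ to assemble $c=c_+t^{2n}\tc_+\iv$. That part is correct.

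There is, however, a gap in your final integrability check. You argue that $(1+t)c_{1,+}\in \Hq$ (where $c_{1,+}$ is the product of the $\eta$'s) follows from the strict exponent bounds, and then dispose of the continuous factor by noting $c_{0,+}^{\pm1}\in H^\sigma\subseteq \Hp\cap\Hq$. But $c_{0,+}$ is in general \emph{unbounded} (Lemma \ref{l.1} only gives $H^\sigma$-membership, $\sigma=1/\eps$, and even for $\phi\in\cB_0$ the factors need not lie in $H^\iy$), and the product of a function in $\Lq$ with another function in $\Lq$ need not be in $\Lq$. So membership of the factor in $\Hp\cap\Hq$ does not yield $(1+t)c_{1,+}c_{0,+}\in\Hq$. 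The paper closes this by first shrinking $\eps$ so that \emph{all} Fredholm conditions hold with an $\eps$-margin (possible because for each $\delta>0$ only finitely many $\tau$ have $|\arg(c^-(\tau)/c^+(\tau))|\ge\delta$), introducing $1/\hat{p}=1/p-\eps$, $1/\hat{q}=1/q-\eps$, so that $(1+t)c_{1,+}\in H^{\hat{q}}(\T)$ and $(1-t)c_{1,+}\iv\in H^{\hat{p}}(\T)$, and then applying H\"older with $c_{0,+}^{\pm1}\in H^{1/\eps}(\T)$ to land in $\Hq$ and $\Hp$ respectively. Your proof needs this coordinated choice of $\eps$ and the H\"older step; as written, the last paragraph does not establish (\ref{f.f2})--(\ref{f.f3}).
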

\begin{proof}
We may restrict out attention to the function $c$. For the function $d$ we can argue similarly. Formally,
we can obtain the results by interchanging $p$ and $q$.

Assuming Fredholmness, the conditions (\ref{f.c10})--(\ref{f.c12}) hold.
Now we can choose $\eps>0$ sufficiently small such that $\eps<\min\{1/p,1/q\}\le 1/2$
and (with appropriate choice of the argument)
$$
-\frac{1}{q}+\eps<\frac{1}{\pi}\arg c^-(1)<1+\frac{1}{p}-\eps,\qquad
-1-\frac{1}{q}+\eps<\frac{1}{\pi}\arg c^-(-1)<\frac{1}{p}-\eps,
$$ 
and
$$
-\frac{1}{q}+\eps<\frac{1}{2\pi} \arg\left( \frac{c^-(\tau)}{c^+(\tau)}\right)<\frac{1}{p}-\eps
$$
hold for all $\tau\in\T_+$. The reason is that for each $\delta>0$ there exists only finitely many
$\tau$ such that $|\arg(c^-(\tau)/c^+(\tau))|\ge \delta$. Define
$$
\frac{1}{\hat{p}}:=\frac{1}{p}-\eps,\qquad 
\frac{1}{\hat{q}}:=\frac{1}{q}-\eps
$$
and notice  that $\hat{p},\hat{q}>1$ and $1/\hat{p}+1/\hat{q}<1$.

With this $\eps$ we apply Proposition \ref{p2.4} and obtain a representation of
$c$ in the form  (\ref{f.13b}). The corresponding parameters satisfy
\begin{enumerate}
\item[(i*)] $-1/\hat{q}<2\,\Re\gamma^+<1+1/\hat{p}$,
\item[(ii*)] $-1-1/\hat{q}<2\,\Re\gamma^-<1/\hat{p}$,
\item[(iii*)] $-1/\hat{q}<\Re\gamma_r<1/\hat{p}$.
\end{enumerate}
By Lemma \ref{l.1} we can factor $c_0=c_{0,+}(\tilde{c}_{0,+})\iv$ with
$(c_{0,+})^{\pm1}\in H^{1/\eps}(\T)$. We define
$$
c_{1,+}=\eta_{1,2\gamma^+}\cdot \eta_{-1,2\gamma^-}\prod_{r=1}^R\eta_{\tau_r,\gamma_r}\cdot
\eta_{\bt_r,\gamma_r}.
$$
Using (\ref{u.etaxi}) and $\tilde{\eta}_{\tau,\beta}=\xi_{\bt,\beta}$, it follows that
$$
c_{1,+}(\tc_{1,+})\iv=u_{1,2\gamma^+}
\cdot u_{-1,2\gamma^-}\prod_{r=1}^R u_{\tau_r,\gamma_r}\cdot
u_{\bt_r,\gamma_r}.
$$
We can define $c_+=c_{0,+}c_{1,+}$ and obtain the desired factorization $c=t^{2n}c_+\tc_+\iv$.
Because of the conditions (i*)--(iii*) it is easy to check that $c_{1,+}$ and its inverse satisfy
conditions as in (\ref{f.f2}), but with $p$ and $q$ replaced by $\hat{p}$ and $\hat{q}$.
Now multiplication with $c_{0,+}$ yields that $c_+$ and its inverse satisfy the conditions (\ref{f.f2}).
This completes the proof.
\end{proof}

The previous result is important in as far as it shows that under the assumption of Fredholmness 
a factorization exists. In the next section we will compute the defect numbers of $T(a)+H(b)$
under the assumption of Fredholmness and that $c,d$ admit a factorization.  Thus the Fredholmness assumption is sufficient for our formulas for the defect numbers (in the piecewise continuous case).

Considering $L^\iy$-symbols one can ask the question whether the Fredholmness of
$T(a)+H(b)$ implies the existence of an anti-symmetric factorization of the auxiliary functions.
That this is not the case shows the limitations for our formulas for the defect numbers.

\begin{example}\em
Let $a=1$ and $b(t)=\exp(\frac{t-1}{t+1})$. Since $b\in H^\iy(\T)$ and $b\iv=\tilde{b}\in \overline{H^\iy(\T)}$
we have that $T(a)+H(b)=I$ is invertible, while on the other hand the functions
$c=d=b\iv$ do not admit factorizations of the above kind as can be shown easily.
\qed
\end{example}


\section{Computation of the defect numbers}
\label{s4}

In what follows, let $1<p<\iy$ and $1/p+1/q=1$. We say that 
$a,b,c,d$ satisfy the {\em basic factorization conditions in $H^p(\T)$} if the following holds:
\begin{itemize}
\item[(i)]
$a,b\in G L^\iy(\T)$ satisfy $a\ta=b\tb$ and 
$$
c=\frac{a}{b}=\frac{\tb}{\ta},\qquad d=\frac{\ta}{b}=\frac{\tb}{a}.
$$
\item[(ii)]
$c$ and $d$ admit factorizations of the form
$$
c(t)=c_+(t) t^{2n} c_+\iv(t\iv),
\qquad
d(t)=d_+(t) t^{2m} d_+\iv(t\iv),
$$
with $n,m\in\Z$  and
$$
(1+t)c_+(t) \in \Hq,\qquad  (1-t) c_+\iv(t)\in \Hp,
$$
$$
(1+t) d_+(t)\in \Hp,\qquad
(1-t)d_+\iv(t) \in \Hq.
$$
\end{itemize}
We remark that the indices $n,m$ are uniquely determined and that functions $c_+$ and $d_+$ are also unique up to a multiplicative constant. This fact is easy to prove and follows for instance from Propositions 3.1 and 3.2 in \cite{BE2}. We will not make use of the uniqueness statement.

In the main theorem of this section we are going to compute the defect numbers of $T(a)+H(b)$
under the above conditions.
In this theorem we have to distinguish several cases, and for the proof of the probably most difficult
case we need the following well-known basic lemma.

\begin{lemma}\label{l4.1}
Let $X$ and $Y$ be Banach spaces, let $A:X\to Y$ be a linear bounded and invertible operator,  let
$P_1$ be a projection operator on $X$, and let $P_2$ be a projection operator on $Y$. Put $Q_1=I-P_1$ and $Q_2=I-P_2$.
Then the Banach spaces $\ker P_2 A P_1$ and $\ker Q_1 A\iv Q_2$ are isomorphic.
\end{lemma}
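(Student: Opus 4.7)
The plan is to exhibit a direct Banach-space isomorphism by transporting elements through $A$ itself. The content of the lemma is the identification of the ``effective'' parts of the two kernels, namely $K_1 := \{u \in \im P_1 : Au \in \im Q_2\}$ and $K_2 := \{w \in \im Q_2 : A\iv w \in \im P_1\}$, obtained from $\ker P_2 A P_1$ and $\ker Q_1 A\iv Q_2$ by viewing $P_2 A P_1$ as a map $\im P_1 \to \im P_2$ and $Q_1 A\iv Q_2$ as a map $\im Q_2 \to \im Q_1$; the trivial summands $\ker P_1 \subseteq \ker P_2 A P_1$ and $\ker Q_2 \subseteq \ker Q_1 A\iv Q_2$ contribute nothing to the isomorphism class.

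First I would unpack the two descriptions: $u \in K_1$ iff $u \in \im P_1$ and $P_2 A u = 0$, which is equivalent to $Au \in \ker P_2 = \im Q_2$; symmetrically, $w \in K_2$ iff $w \in \im Q_2$ and $A\iv w \in \ker Q_1 = \im P_1$. Next, I would define the candidate maps $\phi : K_1 \to K_2$ by $\phi(u) = Au$ and $\psi : K_2 \to K_1$ by $\psi(w) = A\iv w$. Well-definedness is read off directly from the definitions: for $u \in K_1$, the condition $Au \in \im Q_2$ is exactly what places $\phi(u)$ in the domain of $K_2$, while $A\iv(Au) = u \in \im P_1$ is the remaining requirement; the symmetric argument handles $\psi$.

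Finally, the identities $\phi\psi = I_{K_2}$ and $\psi\phi = I_{K_1}$ are immediate from $AA\iv = I_Y$ and $A\iv A = I_X$, and the boundedness of both $\phi$ and $\psi$ is inherited from $\|A\|, \|A\iv\| < \infty$. Hence $\phi$ is a topological isomorphism.

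There is no genuine obstacle: the lemma is the Banach-space version of the elementary matrix identity asserting that for an invertible block operator $A = \bigl(\begin{smallmatrix} A_{11} & A_{12} \\ A_{21} & A_{22} \end{smallmatrix}\bigr)$ with inverse $\bigl(\begin{smallmatrix} B_{11} & B_{12} \\ B_{21} & B_{22} \end{smallmatrix}\bigr)$, the kernels of $A_{11}$ and $B_{22}$ are canonically isomorphic via the restriction of $A$ itself. The only point requiring care is the correct interpretation of ``$\ker P_2 A P_1$'' as the kernel of the corresponding block, which is the natural content when applying such a lemma to compute defect numbers.
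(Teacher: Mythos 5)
Your proof is correct and is essentially the paper's argument: the paper's mutually inverse maps are the restrictions of $P_1A\iv Q_2$ and $Q_2AP_1$, which on the respective kernels coincide exactly with your $\psi(w)=A\iv w$ and $\phi(u)=Au$, since $Au\in\im Q_2$ for $u\in K_1$ and $A\iv w\in\im P_1$ for $w\in K_2$. Your explicit description of $K_1$ and $K_2$ makes the verification more transparent, and you correctly flag the one point that matters, namely that $\ker P_2AP_1$ must be read as the kernel of the block $\im P_1\to\im P_2$ (as the paper does implicitly by defining its maps as restrictions).
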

\begin{proof}
Let $R$ denote the restriction of the operator $P_1A\iv Q_2$ onto $\im Q_2$ mapping into 
$\im P_1$. For $x\in \ker Q_1A\iv Q_2$ we compute
$$
(P_2AP_1)Rx=P_2AP_1A\iv Q_2 x=P_2AA\iv Q_2 x-P_2AQ_1A\iv Q_2x=0.
$$
Hence $R$ maps $\ker Q_1 A\iv Q_2$ into $\ker P_2A P_1$. Similarly, the operator 
$S$ defined as the restriction of $Q_2AP_1$ onto $\im P_1$ maps 
$\ker P_2A P_1$ into  $\ker Q_1 A\iv Q_2$. Now we observe that for $x\in \ker P_2 A P_1$ 
we have
$$
RSx=P_1 A\iv Q_2 A P_1x=P_1-P_1 A\iv  P_2 A P_1 x=P_1 x.
$$
Hence $RS$ equals the identity operator on $\ker P_2 A P_1$. Similarly, $SR$ equals the identity 
operator on $\ker Q_1 A\iv Q_2$. {}From this the desired assertion follows.
\end{proof}

\begin{theorem}\label{t3.2}
Assume that $a,b,c,d$ satisfy the basic factorization conditions in $H^p(\T)$ with $n,m\in\Z$. 
Suppose that $T(a)+H(b)$ is Fredholm on $\Hp$. Then 
\bq\label{f.48}
\dim\ker( T(a)+H(b)) &=&\left\{
\begin{array}{llr} 0 & \mbox{ if }n > 0, \ m\le 0 & \mathrm{(G)} \\
-n & \mbox{ if } n\le 0, \ m\le 0& \mathrm{(G)}\\
\dim \ker A_{n,m} \hspace*{2.6ex}& \mbox{ if } n>0 , \ m>0& \mathrm{(F)}\\
m -n & \mbox { if } n\le 0,\ m>0,& \mathrm{(F)}
\end{array}\right.
\\[3ex]
\label{f.49}
\dim\ker (T(a)+H(b))^* &=& \left\{
\begin{array}{llr}  0 & \mbox{ if }m >  0, \ n\le 0& \mathrm{(G)} \\
 -m & \mbox{ if } m\le  0, \ n\le 0& \mathrm{(G)}\\
 \dim \ker (A_{n,m})^T & \mbox{ if } m>0, \ n>0& \mathrm{(F)}\\
 n-m & \mbox { if } m\le 0,\ n>0.& \mathrm{(F)}
\end{array}\right.
\eq
Therein, in case $n>0$, $m>0$,
$$
 A_{n,m}:=
\left[ \rho_{i-j}+\rho_{i+j}\right]_{i=0}^{n-1}\ _{j=0}^{m-1}.
$$
and 
$$
\rho:=t^{-m-n} (1+t)(1+t\iv) \tc_+\td_+ b\iv \in L^1(\T).
$$
In particular, the Fredholm index of $T(a)+H(b)$ is equal to $m-n$.
\end{theorem}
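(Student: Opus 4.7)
My plan is to reduce $T(a)+H(b)$ to a canonical model operator via the antisymmetric factorizations of $c$ and $d$, and then analyze the model in each of the four sign regimes of $(n,m)$. The first step is to verify the symmetry $\tilde\rho=\rho$ by direct computation: the product $cd=\tilde b/b$ combined with the two factorizations yields $c_+d_+\,t^{2(n+m)}=(\tilde b/b)\,\tilde c_+\tilde d_+$, which means $\psi:=b\,c_+d_+\,t^{n+m}$ is symmetric, and so $\rho=t^{-(n+m)}(1+t)(1+t^{-1})\tilde c_+\tilde d_+/b$ is invariant under $t\mapsto t^{-1}$. This symmetry is precisely what forces $A_{n,m}$ to have the stated form $\rho_{i-j}+\rho_{i+j}$ (and the transpose $A_{n,m}^T$ the analogous form with $n$ and $m$ interchanged), and it already suggests that, modulo invertible factors, the symbols essentially reduce to $\phi\,t^{n-m}$ and $\phi\,t^{-(n+m)}$ with $\tilde\phi=\phi$.

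The second step is to factor, using the identities (\ref{Tab}) and (\ref{Hab}),
\[
T(a)+H(b)=T(\alpha_-)\,\bigl(T(\phi\,t^{n-m})+H(\phi\,t^{-(n+m)})\bigr)\,T(\alpha_+),
\]
where $\alpha_\pm$ are built from $c_+,d_+,\tilde c_+^{-1},\tilde d_+^{-1}$ so that the Hardy-space conditions (\ref{f.f2})--(\ref{f.f3}) give canonical Wiener--Hopf factorizations of $\alpha_\pm$ in $L^p$, whence $T(\alpha_\pm)$ are invertible on $H^p$. The defect numbers of $T(a)+H(b)$ then coincide with those of the canonical model $\widetilde M:=T(\phi t^{n-m})+H(\phi t^{-(n+m)})$. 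The cokernel side can be handled via Lemma~\ref{l4.1}: after realizing $T(a)+H(b)$ as a compression $P_2\mathcal A P_1$ of an invertible operator $\mathcal A$ on $L^p$, $\dim\ker(T(a)+H(b))^*$ becomes $\dim\ker Q_1\mathcal A^{-1}Q_2$, which is the kernel of a dual canonical operator with the roles of $n$ and $m$ interchanged.

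For the canonical model $\widetilde M$, three of the four cases are then direct. When $n,m\le 0$ the Hankel symbol $\phi\,t^{-(n+m)}$ contributes only finite-rank terms controlled by $\phi$, so $\widetilde M$ collapses to a backward shift giving $\dim\ker=-n$; the dualized computation via Lemma~\ref{l4.1} gives $\dim\ker^*=-m$. When $n\le 0<m$ or $n>0\ge m$, an analogous shift-and-project count yields $\dim\ker\in\{m-n,0\}$ and $\dim\ker^*\in\{0,n-m\}$ according to the sign pattern. The delicate case is $n,m>0$: here I would parametrize a kernel element of $\widetilde M$ by the coefficient vector $(x_0,\dots,x_{m-1})\in\C^m$ of a polynomial of degree less than $m$ (the cutoff being dictated by the interaction of the Hardy class with the shift $T(t^{n-m})$), impose $\widetilde M f=0$, and project onto the first $n$ Fourier modes. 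The Toeplitz part of $\widetilde M$ produces the $\rho_{i-j}$ contribution and the Hankel part the $\rho_{i+j}$ contribution, giving exactly $A_{n,m}x=0$. The main obstacle will be showing this correspondence is bijective rather than merely injective; I expect to resolve this by combining a dimension count (exploiting the already-established Fredholm index $m-n$ from Theorem~\ref{t2.5}) with an explicit inversion on the complementary subspace where $\widetilde M$ acts without defect, and then to obtain the cokernel formula $\dim\ker A_{n,m}^T$ by applying Lemma~\ref{l4.1} in exactly the same framework.
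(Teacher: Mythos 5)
Your overall architecture --- exploit the symmetry $\tilde\rho=\rho$, reduce to a model operator built from a symmetric $\phi$, treat the four sign cases separately, and invoke Lemma~\ref{l4.1} for the cokernel --- is close in spirit to the paper's, and your verification that $\psi=b\,c_+d_+t^{n+m}$ is symmetric is correct. But there are genuine gaps. First, the factorization $T(a)+H(b)=T(\alpha_-)\widetilde M\,T(\alpha_+)$ is only formal: the basic factorization conditions give merely $(1+t)c_+\in \Hq$ and $(1-t)c_+\iv\in\Hp$, so $c_+$, $d_+$, hence $\alpha_\pm$ and $\phi=t^{n+m}\tc_+\iv\td_+\iv b$, are in general unbounded; $T(\alpha_\pm)$ are then not bounded (let alone invertible) operators on $\Hp$, and $\widetilde M$ itself need not be bounded. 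The paper never factors the operator: it manipulates the kernel identity $af_++t\iv b\tf_+=f_-$ by multiplying through by $(1-t)\td_+\iv$ and then by $t^{n}c_+\td_+a\iv(1+t)/(1-t)$, staying inside $L^1(\T)$ throughout and reading off constraints from Fourier coefficients. Second, your treatment of $n,m\le0$ is wrong as stated: $H(\phi t^{-(n+m)})$ is not finite rank for nonrational $\phi$. The correct mechanism is that the homogeneous equation $(1+t)t^{n}c_+f_++(1+t\iv)t^{-n}\tc_+\tf_+=0$ forces $q_1=(1+t)c_+f_+$ to lie in $\cP^-_{-2n}$, a space of dimension $-n$, and the factorization condition guarantees $f_+=(1-t)c_+\iv q_2\in\Hp$.

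Third, and most importantly, in the case $n,m>0$ your projection argument only yields the inequality $\dim\ker(T(a)+H(b))\le\dim\ker A_{n,m}$, and the Fredholm index cannot upgrade this to equality: the index gives $\dim\ker-\dim\ker^*=m-n$, which is exactly the difference $\dim\ker A_{n,m}-\dim\ker A_{n,m}^{T}$ of the two upper bounds (an $n\times m$ matrix always satisfies this), so the two inequalities remain mutually consistent with both being strict. This is precisely why the paper needs an extra step: setting $a'=at^{m-n}$, $b'=bt^{n+m}$, the operator $A'=T(a')+H(b')$ falls under the already-proved case $n=m=0$ and is therefore invertible; $T(a)+H(b)$ is the compression $Q_mA'Q_n$; Lemma~\ref{l4.1} converts the problem into computing $\dim\ker(P_n(A')\iv P_m)$; and the explicit identity $T_n(c_+(1+t))\,(P_n(A')\iv P_m)\,T_m(\td_+(1+t\iv))=D_nA_{n,m}D_m$, with invertible triangular Toeplitz factors, finishes the computation. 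Your sketch gestures at ``an explicit inversion on the complementary subspace'' but does not supply this identity, which is the substantive content of the hard case.
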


Before we begin with the proof, let us remark that the cases marked with (G) do not require the assumption that $T(a)+H(b)$ is Fredholm. The cases marked with (F) seem to require the assumption that $T(a)+H(b)$ is Fredholm. Since we are not going to present counterexamples here, the question whether Fredholmness is actually needed may be deferred to future investigations. Without the Fredholmness we will prove that the right hand sides are an upper estimate for the dimensions of the respective kernels. 

\begin{proof}
In what follows, let $\cP$ stand for the set of all polynomials in $t$ and let 
$$
\cP^\pm_\kappa:=\Big\{ p\in \cP\;:\; \pm  t^{\kappa} \tilde{p}=p\Big\}.
$$
It is easy to see that
$$
\cP_{2\kappa}^-=(1-t^2)\cP_{2\kappa-2}^+,\quad
\cP_{2\kappa+1}^+=(1+t)\cP_{2\kappa}^+,\quad
\cP_{2\kappa+1}^-=(1-t)\cP_{2\kappa}^+.
$$
and $\dim P^+_{2\kappa}=\kappa+1$ in case $\kappa\ge 0$ and $=0$ in case $\kappa<0$.
Moreover, we introduce
$$
\cP_{n,m}=\Big\{ p\in L^1(\T)\,:\, p_k=0 \mbox{ for } k<n \mbox{ or } k>m \Big\}.
$$ 
We will use this notation and these basic facts in what follows.

Let us start with analyzing the dimension of the kernel of $T(a)+H(b)$.
We need to find the dimension of the space of all $f_+\in \Hp$ such that 
$(T(a)+H(b))f_+=0$. For each such $f_+$ there exist a (unique) $f_-\in t\iv \ovl{\Hp}$ such that 
$$
a f_+  +   t^{-1} b \tf_+= f_-.
$$
Multiplying with $(1-t)\td_+\iv$  we obtain
$$
f_0:=(1-t)\td_+\iv a f_+ -(1-t\iv)\td_+\iv b \tf_+ = -t (1-t\iv)\td_+\iv f_-.
$$
The right hand side is in $\ovl{H^1(\T)}$ while  the left hand side $f_0$ satisfies the relation
$\tf_0=-t^{2m}f_0$ because
$$
d_+\iv \ta=d_+\iv d b=t^{2m} \td_+\iv b.
$$
Hence $f_0=\tilde{p}_1$ with $p_1\in \cP_{2m}^{-}$. Thus we can write
$p_1=(1-t^2) p_2$ with $p_2\in \cP^+_{2m-2}$. Now we can eliminate $f_-$ and obtain
$$
f_-=-t\iv (1-t\iv)^{-1} \td_+ \tilde{p}_1=-t\iv(1+t\iv)\td_+ \tilde{p}_2.
$$
Since for each polynomial $p_2$, the function $f_-$ defined in this way belongs to $t\iv \ovl{\Hp}$, we are left with determining the dimension of the space of all $(f_+,p_2)\in \Hp\times\cP^{+}_{2m-2}$
for which
$$
(1-t)\td_+\iv  a f_+ -(1-t\iv)\td_+\iv  b \tf_+= (1-t^{-2})\tilde{p}_2.
$$

Multiplying with 
\be\label{f.fm}
t^{n} c_+\td_+ a\iv \frac{1+t}{1-t}=
-t^{-n} \tc_+\td_+ b\iv \frac{1+t\iv}{1-t\iv} =
- \frac{t^{m-1}    \rho}{1-t^{-2}}
\ee
we obtain
$$
(1+t)t^{n}c_+ f_+ + (1+t\iv) t^{-n} \tc_+ \tf_+ = 
- t^{m-1}\rho\tilde{p}_2.
$$
Each of terms on the left hand side is in $L^1(\T)$. So is the right hand side because
$\rho\in L^1(\T)$. We substitute $p_3=t^{-m+1}p_2=t^{m-1}\tilde{p}_2$
and obtain
\bq\label{f.e2}
(1+t)t^{n}c_+ f_+ + (1+t\iv) t^{-n} \tc_+ \tf_+ = 
-\rho p_3
\eq
with $p_3$ subject to the condition $p_3\in \cP_{-m+1,m-1}$ and $\tilde{p}_3=p_3$.
Since the argumentation can be reversed, we are led to determine the space of all 
$(f_+,p_3)$ satisfying (\ref{f.e2}) and the mentioned conditions.

We now have to distinguish several cases.

\underline{Case $n> 0$, $m\le 0$.}\
Because $m\le0$ we have $p_3\equiv0$, and hence we need to look at the homogeneous version of (\ref{f.e2}). Noting that $(1+t) c_+ f_+\in H^1(\T)$, the condition $n\ge 0$ easily implies (by inspecting the
Fourier coefficients) that $(1+t) c_+ f_+\equiv0$, whence $f_+\equiv 0$.
Thus we conclude that the kernel of $T(a)+H(b)$ is trivial.

\underline{Case $n\le 0$, $m\le 0$.}\
As before we are led to the  homogeneous version of (\ref{f.e2}). If we define
$q_1:=(1+t)c_+ f_+\in H^1(\T)$, the condition is that $q_1=-t^{-2n}\tilde{q}_1$.
This implies that $q_1$ is a polynomial of degree at most $-2n$ and moreover that 
$q_1\in \cP^-_{-2n}$. We can write $q_1=(1-t^2)q_2$
with $q_2\in \cP^+_{-2n-2}$. Substituting back we obtain
$f_+=(1-t)c_+\iv q_2$ which always belongs to $\Hp$. From this we can conclude that 
the dimension of the kernel of $T(a)+H(b)$ equals the dimension of  $\cP^+_{-2n-2}$,
which is $-n$.

\underline{Case $n>0$, $m>0$.}\ 
Here, for $-n<k<n$, the $k$-th Fourier coefficient on the right hand side  
of (\ref{f.e2}) has to vanish:
\begin{equation}\label{e.fp3}
\left[\rho p_3\right]_{k}=0, \qquad -n<k<n.
\end{equation}
This finite Toeplitz system we are going to transform (equivalently) further by making use of $\tilde{\rho}=\rho$ and the requirements on $p_3$. (The fact that $\rho\in L^1(\T)$ and that $\rho=\tilde{\rho}$ can be checked easily.) Thus we can write $p_3$ uniquely in the form
$p_3=p_4+\tilde{p_4}$ with $p_4\in \cP_{0,m-1}$. Moreover, this correspondence between
$p_3$ and $p_4$ is one-to-one. We arrive at the equivalent condition
\be\label{e.thr}
\sum_{i=0}^{n-1}\left(\rho_{k-i}p_{4,i}+\rho_{k+i}p_{4,i}\right) =0,\qquad -n<k<n.
\ee
A moments though shows that the conditions for $-n<k<0$ are the same as for
$0<k<n$. Thus we can restrict to $0\le k<n$. Then this system corresponds precisely to the
matrix $A_{m,n}$. We conclude that the dimension of the space of solutions $p_3$ satisfying 
(\ref{e.fp3}) equals the dimension of the kernel of $A_{m,n}$. At this point we have shown that 
for each solution $(f_+,p_3)$ of (\ref{e.fp3}) we can assign a polynomial $p_4$ satisfying (\ref{e.thr}). This mapping is linear and injective. Indeed, the injectiveness follows from the assumption $n>0$. We can conclude that 
$$
\dim\ker (T(a)+H(b))\le \dim \ker A_{n,m},
$$
which confirms the statement made after the theorem. So far we have not used the Fredholmness assumption. The proof of equality in the Fredholm case will be given later.

\underline{Case $n\le 0$, $m>0$.}\
Here we consider the (linear) mapping $\Lambda$ which sends the space of all solutions
$(f_+,p_3)$ of  (\ref{f.e2}) to $p_3$. Because $p_3\in \cP_{-m+1,m-1}$ and $\tilde{p}_3=p_3$, the
dimension of the range of this mapping is at most $m$. On the other, as follows from the argumentation given in the second case ($n\le 0$, $m\ge 0$), the kernel of $\Lambda$ is equal to $-n$. Hence we conclude that the space of solutions $(f_+,p_3)$ is at most $m-n$, i.e., 
$$
\dim\ker (T(a)+H(b))\le m-n.
$$
Again, we have not used the assumption about the Fredholmness, and we will complete the discussion of this case later.

Let us now consider the kernel of the adjoint. If we identify the dual of $\Hp$ with $\Hq$ via the mapping
$g\in \Hq\mapsto \langle g,\cdot\rangle\in (\Hp)'$ where
$$
\langle g,f \rangle =\frac{1}{2\pi} \int_{0}^{2\pi} g(e^{-ix}) f(e^{i\theta})\,d\theta,
$$
then the adjoint of $T(a)+H(b)$ is identified with  $T(\tilde{a})+H(b)$. To give another interpretation,
if we consider the matrix representations with respect to the standard bases $\{t^n\}_{n=0}^\iy$ in $\Hp$ and $\Hq$, then $T(\tilde{a})+H(b)$ is the transpose of $T(a)+H(b)$. In any case,
$$
\dim \ker (T(a)+H(b))^*=\dim\ker( T(\ta)+H(b)).
$$
Now we could either reproduce the above arguments and prove formula (\ref{f.49}) in the cases (G) and 
the estimates ``$\le$'' in the cases (F). We can also derive this in a simple manner, by noting that the passage of the operator $T(a)+H(b)$ on $\Hp$ to the operator $T(\tilde{a})+H(b)$ on $\Hq$ corresponds to the following changes:
$$
p\leftrightarrow q,\quad
a\leftrightarrow \tilde{a},  \quad b \leftrightarrow b,  \quad c \leftrightarrow d, \quad 
c_+ \leftrightarrow d_+, \quad
n\leftrightarrow m, \quad \rho\leftrightarrow\rho
$$
This is, in particular, compatible with the assumption that $a,b,c,d$ satisfy the basic factorization conditions. Let us record the corresponding inequalities in the cases (F) of (\ref{f.49}). In case $m>0$, $n>0$ we can conclude that
$$
\dim\ker (T(a)+H(b))^*\le \dim\ker A_{m,n}.
$$
Notice that $A_{m,n}=A_{n,m}^T$. In case $m\le0$, $n>0$ we can establish that 
$$
\dim\ker (T(a)+H(b))^*\le n-m.
$$

Next let us determine the Fredholm index. We assume that $T(a)+H(b)$ is Fredholm.
Given the factorization of the corresponding functions $c,d$ with $n,m$ we define
$$
a'(t)=a(t) t^{m-n}, \qquad b'(t)=b(t)t^{n+m}.
$$
Then the corresponding functions $c'$ and $d'$ evaluate to 
$$
c'(t)=c(t) t^{-2n},\qquad d'(t)=d(t)t^{-2m}.
$$
It follows that $a',b',c',d'$ satisfies the basic factorization conditions with 
$n=m=0$ (and the same $c_+,d_+$). Because for $n=m=0$ the formulas (\ref{f.48}) and (\ref{f.49}) have been proved, and we are able to conclude that the kernel of both $T(a')+H(b')$ and its adjoint are trivial.
To proceed, we observe that 
$$
T(a)+H(b) = T(a't^{n-m})+H(b't^{-n-m})
$$
This equals, modulo compact operators,
$$
T(t^{-m})T(a')T(t^n)+T(t^{-m})H(b') T(t^n)
$$
by using (\ref{Tab}) and (\ref{Hab}).
Because $T(t^{-m})$ and $T(t^n)$ are Fredholm (with index $m$ and $-n$) we can now conclude that 
$T(a)+H(b)$ is Fredholm if and only of $T(a')+H(b')$ is Fredholm. Since $T(a)+H(b)$ is assumed to be Fredholm it follows that $T(a')+H(b')$ is Fredholm. In fact, it is invertible due to the vanishing of the defect numbers. Now we can conclude that the Fredholm index of $T(a)+H(b)$ is $m-n$.

Now let us return to the outstanding cases (F) of formulas (\ref{f.48}) and (\ref{f.49}).
First consider the case $n\le 0$, $m>0$ of (\ref{f.48}). The inequality has already been proved.
The equality follows from the Fredholm index formula and from the respective case of  formula (\ref{f.49})  (which again has already been proved).
Similarly, we can argue in the case $m\le0$, $n>0$ of formula (\ref{f.49}).

Unfortunately, the case $n,m>0$ of both formulas cannot be settled by such an argument.
Introduce $a'$ and $b'$ as above. Then we have for the Fourier coefficients
$$
[a']_k=a_{k-m+n},\quad [b']_k=b_{k-n-m},
$$
whence $[a']_{i-j}+b'_{1+i+j}=a_{(i-m)-(j-n)}+b_{(i-m)+(j-n)+1}$. Considering the matrix representations with respect to the standard bases, we observe that we obtain
$A=T(a)+H(b)$ by deleting the first $m$ rows and first $n$ columns of $A'=T(a')+H(b')$.
Introduce the projection operator
$$
P_n:\sum_{k=0}^\iy f_k e^{ik\theta}\in \Hp\mapsto \sum_{k=0}^{n-1} f_k e^{ik\theta}\in \Hp
$$
and its complementary projection $Q_n=I-P_n$. Then, what we have just said implies that 
$A\cong Q_m A' Q_n$, where we consider last operator as acting from the image of $Q_n$
into the image of $Q_m$. Now we use the fact that $A'$ is invertible  on $\Hp$, 
which has been shown above, and apply Lemma \ref{l4.1} in order to conclude that 
$$
\dim \ker A =\dim \ker (P_n (A')\iv P_m).
$$
In order to prove the desired assertion, we are going to establish the formula
$$
T_n(c_+(1+t)) (P_n (A')\iv P_m) T_m(\tilde{d}_+(1+t\iv))=D_n A_{n,m} D_m
$$
where $T_n(\phi):= P_n T(\phi) P_n$ is a finite Toeplitz matrix and 
$D_n:=\mathrm{diag}(1/2,1,1,\dots,1)$ is an $n\times n$ diagonal matrix.
Clearly, we identify operators on the space $\Hp$ with their matrix representation with respect to the 
standard basis.
Before we start we remark that the matrices $T_n(c_+(1+t))$ and $T_m(\tilde{d}_+(1+t\iv))$
are lower and upper triangular Toeplitz matrices with non-zero entries on the main diagonal.

We are going to prove the identity by computing the $(j,k)$-entry, $0\le j<n$, $0\le k<m$.
For this apply the matrix on the left to the function $t^k$. Put
$$
g_+=T_m(\tilde{d}_+(1+t\iv)) t^k.
$$
Clearly $g_+$ is a polynomial in $t$, which is determined by
$$
g_+=\tilde{d}_+(1+t\iv) t^k+t\iv \ovl{\Hp}.
$$
Next we put $f_+=(A')\iv g_+\in \Hp$. Equivalently, $g_+=A' f_+$, or
$$
g_+=a' f_++b' t\iv \tilde{f}_++t\iv \ovl{\Hp}.
$$
Now combine both equations to conclude that
$$
\tilde{d}_+(1+t\iv) t^k=a ' f_++b't\iv \tilde{f}_++t\iv \ovl{\Hp}.
$$
We proceed by using the same ideas as at the very beginning of the proof. We multiply with 
$(1-t)\tilde{d}_+\iv $ to get
$$
(t\iv - t)t^k = f_0+ \ovl{H^1(\T)}, \quad f_0:= (1-t)\tilde{d}_+\iv a' f_+ -(1-t\iv )\tilde{d}_+\iv b' \tilde{f}_+ 
$$
where $f_0=-\tilde{f}_0$. In case $k=0$ we leave the equation as it is and conclude that
$t\iv-t=f_0$ by inspecting the Fourier coefficients. In case $k\ge 1$ we substract
$(t-t\iv)t^{-k}\in \ovl{H^1(\T)}$ and obtain compare the Fourier coefficients. In both cases we 
conclude that 
$$
\sigma_k(t\iv-t)(t^k+t^{-k})=f_0
$$
where $\sigma_0=1/2$ and $\sigma_k=1$ for $k\ge1$. Now we multiply with the respective 
expression (\ref{f.fm}), noting that $n=m=0$, and obtain
$$
\rho \sigma_k(t^k+t^{-k})=(1+t)c_+f_++(1+t\iv) \tilde{c}_+ \tilde{f}_+.
$$
All terms belong to $L^1(\T)$ and we obtain for the $j$-th Fourier coefficient
$$
\sigma_j\sigma_k\Big[\rho(t^k+t^{-k})\Big]_j=\Big[(1+t)c_+ f_+\Big]_j.
$$
Now it is easy to conclude that the left hand side is the $(j,k)-entry$ of the matrix
$D_n A_{n,m} D_m$ and that the right hand side is the $j$-th entry of
the vector $T_n((1+t)c_+) P_nf_+$. This proves the desired identity and concludes the proof of
the case $n,m>0$ of formula (\ref{f.48}). The corresponding case for formula (\ref{f.49}) can be derived
similarly, or by making use of the Fredholm index of $T(a)+H(b)$.
\end{proof}

We can now state a simple conclusion concerning invertibility.

\begin{corollary}
Let $a,b,c,d$ satisfy the basic factorization conditions for $\Hp$. Suppose that 
$T(a)+H(b)$ is Fredholm on $\Hp$. Then $T(a)+H(b)$ is invertible on $\Hp$ if and only if
\begin{enumerate}
\item[(i)]
$n=m=0$, or,
\item[(ii)]
$n=m>0$ and $A_{n,n}$ is invertible.
\end{enumerate}
\end{corollary}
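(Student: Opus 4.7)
The plan is a direct case-by-case analysis based on the formulas for $\dim\ker(T(a)+H(b))$ and $\dim\ker(T(a)+H(b))^*$ established in Theorem \ref{t3.2}. Invertibility of a Fredholm operator is equivalent to both defect numbers being zero, so I will run through the four cases $(n,m)$ partitioned by the signs $n\le0$ vs.\ $n>0$ and $m\le 0$ vs.\ $m>0$ and identify exactly when both kernels vanish.

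First consider the two ``off-diagonal'' cases. If $n>0$ and $m\le 0$, then $\dim\ker(T(a)+H(b))^*=n-m>0$ by (\ref{f.49}), so the operator fails to be invertible. Symmetrically, if $n\le 0$ and $m>0$, then $\dim\ker(T(a)+H(b))=m-n>0$ by (\ref{f.48}), again ruling out invertibility. Next, if $n\le 0$ and $m\le 0$, then $\dim\ker(T(a)+H(b))=-n$ and $\dim\ker(T(a)+H(b))^*=-m$, and both vanish precisely when $n=m=0$; this yields alternative (i).

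Finally, in the case $n>0$ and $m>0$ we have $\dim\ker(T(a)+H(b))=\dim\ker A_{n,m}$ and $\dim\ker(T(a)+H(b))^*=\dim\ker(A_{n,m})^T$. The Fredholm index equals $m-n$, and on the other hand is the difference of these two kernel dimensions; for both to be zero we must have $m=n$, so that $A_{n,m}=A_{n,n}$ is a square matrix, and $A_{n,n}$ must have trivial kernel, i.e.\ be invertible. This is alternative (ii). Conversely, if (i) or (ii) holds, Theorem \ref{t3.2} immediately gives that both defect numbers vanish, hence $T(a)+H(b)$ is invertible. No step is genuinely hard here; the only thing to be careful about is matching the case split with the correct rows of (\ref{f.48}) and (\ref{f.49}) and observing that $A_{n,n}$ is square so that triviality of its kernel is equivalent to invertibility.
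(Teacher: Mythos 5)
Your proof is correct and is exactly the intended argument: the paper states this corollary without proof as an immediate consequence of Theorem \ref{t3.2}, and your case-by-case reading of (\ref{f.48})--(\ref{f.49}), together with the observation that the index forces $n=m$ in the case $n,m>0$ so that $A_{n,n}$ is square, is the straightforward derivation.
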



\section{Some special cases}
\label{s5}

Throughout this section, let $PC_0$ stand for the set of all piecewise continuous functions with 
finitely many jump discontinuities. We denote by $GPC_0$ the set of all invertible function 
in $PC_0$. Moreover, we let $G_0 C(\T)$ stand for the set of all nonvanishing continuous functions on $\T$ with winding number zero.
Notice that $a\in G_0C(\T)$ if and only if $a$ has a continuous logarithm.
 Again, throughout the section, $1<p<\iy$ and $1/p+1/q=1$.

We will restrict to functions from $PC_0$ only to simplify the presentation of the results.

\subsection{The case of operators $\boldsymbol{T(a)\pm H(a)}$}

The case of $T(a)+H(a)$ has already been established in \cite[Thm.~7.4]{BE2}. We derive it here again by employing our previous results.

\begin{theorem} 
Let $a\in GPC_0$. Then $A=T(a)+H(a)$ is Fredholm  on $\Hp$ if and only if $a$ can be represented
in the form
\be\label{f.p1}
a(t) =  t^{\kappa} a_0(t) u_{1,\beta^+}(t) u_{-1,\beta^-}(t)\prod_{r=1}^R
u_{\tau_r,\beta_r^+}(t) u_{\bar{\tau}_r,\beta_r^-}(t) 
\ee
with $\tau_1,\dots\tau_R\in\T_+$ being distinct, $\kappa\in\Z$, $a_0\in  G_0C(\T)$, and
\begin{enumerate}
\item[(i)] $-1/2-1/2q<\Re \beta^+<1/2p$, 
\item[(ii)] $-1/2q<\Re \beta^{-}< 1/2+1/2p$, 
\item[(iii)]  $-1/q<\Re(\beta_r^++\beta_r^-) <1/p$ for $1\le r\le R$.
\end{enumerate}
Moreover, in this case
$$
\ker A=\left\{\ba{cl}
0 & \mbox{ if }\kappa\ge0 \\ -\kappa & \mbox{ if }\kappa<0, \ea\right.\qquad
\ker A^*=\left\{\ba{cl}
\kappa & \mbox{ if } \kappa>0\\ 0 &  \mbox{ if }\kappa\le 0. \ea\right.
$$
\end{theorem}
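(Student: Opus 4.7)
My plan is to reduce the theorem to the general results of Sections~\ref{s2}–\ref{s4} by specializing to $b=a$. Then $a\tilde a=b\tilde b$ is automatic and the auxiliary functions of (\ref{f.c9}) become $c\equiv 1$ and $d=\tilde a/a$, so the whole problem collapses to tracking a single auxiliary function.

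The first step is a normal-form computation. Any $a\in GPC_0$ admits a representation of the form (\ref{f.p1}), because at each jump point $\tau\in\T$ the ratio $a^-(\tau)/a^+(\tau)\in\C\setminus\{0\}$ prescribes the corresponding $\beta$ modulo $\Z$, and after dividing $a$ by the $u$-factors one is left with a nonvanishing continuous function, which factors as $t^\kappa a_0$ with $\kappa\in\Z$ and $a_0\in G_0C(\T)$. Using the identity $\tilde u_{\tau,\beta}=u_{\bar\tau,-\beta}$, a direct computation yields
\[
d\;=\;\frac{\tilde a}{a}\;=\;t^{-2\kappa}\,\frac{\tilde a_0}{a_0}\cdot u_{1,-2\beta^+}\,u_{-1,-2\beta^-}\prod_{r=1}^{R} u_{\tau_r,-\beta_r^+-\beta_r^-}\,u_{\bar\tau_r,-\beta_r^+-\beta_r^-},
\]
and $\tilde a_0/a_0$ is continuous with value $1$ at $\pm 1$, winding number zero, and satisfies $\phi\tilde\phi=1$, so it lies in $\cB_0$. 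Matching this with the representation of $d$ supplied by Proposition~\ref{p2.4} yields $m=-\kappa$, $\delta^+=-\beta^+$, $\delta^-=-\beta^-$, and $\delta_r=-\beta_r^+-\beta_r^-$.

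For the Fredholm criterion I would apply Corollary~\ref{c2.2}. The conditions (\ref{f.c10})–(\ref{f.c12}) on $c\equiv 1$ are automatic, and those on $d$ translate, under the identifications above and a one-line sign negation, into the requirement that the real parts of $\beta^\pm$ and of $\beta_r^++\beta_r^-$ can be shifted by integers into the open intervals (i)–(iii). Since the $u$-parameters are only defined modulo $\Z$, this is equivalent to saying that $a$ admits a representation (\ref{f.p1}) with parameters obeying (i)–(iii).

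For the defect numbers I would apply Theorem~\ref{t3.2}. The trivial factorization of $c\equiv 1$ gives $c_+\equiv 1$ and $n=0$, while Theorem~\ref{t3.3x} (or a direct construction from the product representation of $d$) supplies the antisymmetric factorization of $d$ with index $m=-\kappa$; hence $a,b,c,d$ satisfy the basic factorization conditions. Inserting $n=0$ and $m=-\kappa$ into (\ref{f.48})–(\ref{f.49}): for $\kappa\ge 0$ we fall into the cell $n\le 0$, $m\le 0$, giving $\dim\ker A=-n=0$ and $\dim\ker A^*=-m=\kappa$; for $\kappa<0$ we fall into the cell $n\le 0$, $m>0$, giving $\dim\ker A=m-n=-\kappa$ and $\dim\ker A^*=0$. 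The matrix $A_{n,m}$ never enters, because $n=0$. The main technical point is the parameter bookkeeping — confirming that the strips (i)–(iii) for the $\beta$'s correspond exactly to the strips of Proposition~\ref{p2.4} for the $\delta$'s after the negations — but once that match is verified everything else is mechanical.
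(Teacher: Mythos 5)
Your proposal is correct and follows essentially the same route as the paper's own proof: specialize to $b=a$ so that $c\equiv 1$ (trivial factorization, $n=0$) and $d=\tilde a/a$, compute the $u$-product representation of $d$ from that of $a$ to identify $\delta^+=-\beta^+$, $\delta^-=-\beta^-$, $\delta_r=-\beta_r^+-\beta_r^-$, $m=-\kappa$, then invoke Corollary~\ref{c2.2} for the Fredholm criterion and Theorems~\ref{t3.3x} and \ref{t3.2} for the defect numbers. The parameter bookkeeping and the case analysis in (\ref{f.48})--(\ref{f.49}) are both carried out correctly.
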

\begin{proof}
Because $a\in GPC_0$ we can always represent it in the form (\ref{f.p1}) 
with perhaps not yet properly chosen parameters $\beta$ and $\varkappa$. Notice that we can always
add an integers to the $\beta$ parameters at the expanse of modifying $\kappa$ and changing $a_0$
by a constant. In order to apply the results of the previous sections, we evaluate the corresponding functions $c=1$ and $d=\tilde{a}/a$. There is nothing to check for $c$.
For $d$ we obtain
$$
d(t) = t^{-2\kappa} \ta_0(t) a_0\iv(t) u_{1,-2\beta^+}(t) u_{-1,-2\beta^-}(t) \prod_{r=1}^R
u_{\tau_r,-\beta_r^+-\beta_r^-}(t) u_{\bar{\tau}_r,-\beta_r^+-\beta_r^-}(t).
$$
If (i)--(iii) are satisfied, then $A$ is Fredholm by Corollary \ref{c2.2} (see also (\ref{u.pm})).
Conversely, if $A$ is Fredholm, then (\ref{f.c10})--(\ref{f.c12}) hold and we can modify
the parameters $\beta$ and $\kappa$ by adding integers such that (i)--(iii) hold and
the winding number of $a_0$ is zero. Notice that we also achieve a representation (\ref{f.14b})
as in Proposition \ref{p2.4}, where
$$
\delta^+=-\beta^+,\quad \delta^-=-\beta^-,\quad \delta_r=-\beta_r^+-\beta_r^-.
$$
Furthermore, by Theorem \ref{t3.3x} we have a factorization of $d$ as (\ref{t3.3x}) with $m=-\kappa$.
The function $c\equiv1$ has a trivial factorization with $c_+\equiv1$ and $n=0$.
It remains to apply Theorem \ref{t3.2} to identify the defect numbers.
\end{proof}

The case of $T(a)-H(a)$ is quite analogous. Notice that, since
$T(a)-H(a)=\widehat{W}(T(\hat{a})+H(\hat{a}))\widehat{W}$ with $(\widehat{W}f)(t)=f(-t)$ and $\hat{a}(t)=a(-t)$,
it can be reduced to the former case. It simply amounts to interchange the conditions  (i) and (ii).


\subsection{The cases of operators $\boldsymbol{T(a)-H(t\iv a)}$ and $\boldsymbol{T(a)+H(t a)}$}

The cases of operators
$$
T(a)-H(t\iv a)\cong(a_{j-k}-a_{j+k+2})_{j,k=0}^\iy,\qquad
T(a)+H(t\iv a)\cong(a_{j-k}+a_{j+k})_{j,k=0}^\iy.
$$
also admit fairly simple characterizations. 
They have been examined recently in the case of smooth symbols in \cite{BE4}.

\begin{theorem} 
Let $a\in GPC_0$. Then $A=T(a)-H(t\iv a)$ is Fredholm  on $\Hp$ if and only if $a$ can be represented
in the form
\be\label{f.p2}
a(t) =  t^{\kappa} a_0(t) u_{1,\beta^+}(t) u_{-1,\beta^-}(t)\prod_{r=1}^R
u_{\tau_r,\beta_r^+}(t) u_{\bar{\tau}_r,\beta_r^-}(t) 
\ee
with $\tau_1,\dots\tau_R\in\T_+$ being distinct, $\kappa\in\Z$, $a_0\in  G_0C(\T)$, and
\begin{enumerate}
\item[(i)] $-1/2q<\Re \beta^+<1/2+1/2p$, 
\item[(ii)] $-1/2q<\Re \beta^{-}< 1/2+1/2p$, 
\item[(iii)]  $-1/q<\Re(\beta_r^++\beta_r^-) <1/p$ for $1\le r\le R$.
\end{enumerate}
Moreover, in this case
$$
\ker A=\left\{\ba{cl}
0 & \mbox{ if }\kappa\ge0 \\ -\kappa & \mbox{ if }\kappa<0, \ea\right.\qquad
\ker A^*=\left\{\ba{cl}
\kappa & \mbox{ if } \kappa>0\\ 0 &  \mbox{ if }\kappa\le 0. \ea\right.
$$
\end{theorem}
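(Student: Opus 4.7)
The plan is to specialize the general framework of Sections \ref{s2}--\ref{s4} to $b(t)=-t\iv a(t)$, so that $T(a)+H(b)=T(a)-H(t\iv a)$. A direct calculation gives $\tb(t)=-t\ta(t)$ and hence $b\tb=a\ta$. The auxiliary functions become
$$
c=a/b=-t=u_{1,1}(t),\qquad d=\ta/b=-t\ta/a.
$$
Because $c$ is a polynomial with no jump discontinuities, the Fredholm conditions (\ref{f.c10})--(\ref{f.c12}) restricted to $c$ hold automatically for every $p\in(1,\iy)$ ($c^-(\pm1)\in\{\pm1\}$ and $c^-(\tau)/c^+(\tau)=1$). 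Moreover $c$ admits the trivial antisymmetric factorization $c(t)=(1-t)\cdot t^{0}\cdot(1-t\iv)\iv$ required by Theorem \ref{t3.3x}, with $c_+(t)=1-t$ and $n=0$; both conditions in (\ref{f.f2}) are immediate.

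Assume now that $a$ has the form (\ref{f.p2}). Using $\tilde{u}_{\tau,\beta}=u_{\bt,-\beta}$ we get
$$
d(t)=t^{-2\kappa}\,\frac{\ta_0(t)}{a_0(t)}\,u_{1,1-2\beta^+}(t)\,u_{-1,-2\beta^-}(t)
\prod_{r=1}^R u_{\tau_r,-\beta_r^+-\beta_r^-}(t)\,u_{\bt_r,-\beta_r^+-\beta_r^-}(t),
$$
which matches the form (\ref{f.14b}) from Proposition \ref{p2.4} with $m=-\kappa$, $d_0=\ta_0/a_0\in\cB_0$, and parameters
$$
\delta^+=\tfrac12-\beta^+,\quad \delta^-=-\beta^-,\quad \delta_r=-\beta_r^+-\beta_r^-.
$$
Substituting these into the Fredholm conditions (\ref{f.c10})--(\ref{f.c12}) applied to $d$ (using $u_{\tau,\beta}^-(\tau)/u_{\tau,\beta}^+(\tau)=e^{2\pi i\beta}$) and solving for $\beta^\pm,\beta_r^\pm$ shows they are equivalent to (i)--(iii) of the theorem. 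Conversely, every $a\in GPC_0$ can be written in the form (\ref{f.p2}) with some parameters determined only modulo $\Z$; integer shifts can be absorbed into $\kappa$ via $u_{\tau,\beta+k}(t)=u_{\tau,\beta}(t)(-t/\tau)^k$ and into a constant factor of $a_0$. Fredholmness of $A$ then forces the $\beta$'s into the stated open intervals and pins down the winding number of $a_0$ as zero.

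For the defect numbers, Theorem \ref{t3.3x} produces antisymmetric factorizations of $c$ and $d$ with indices $n=0$ and $m=-\kappa$, respectively. Plugging these into Theorem \ref{t3.2}, only the $n=0$ rows of (\ref{f.48})--(\ref{f.49}) trigger. When $\kappa\ge 0$ (so $m\le 0$ and $n\le 0$) the case $n\le 0,\ m\le 0$ gives $\dim\ker A=-n=0$ and $\dim\ker A^*=-m=\kappa$. When $\kappa<0$ (so $m>0$ and $n\le 0$) the case $n\le 0,\ m>0$ of (\ref{f.48}) gives $\dim\ker A=m-n=-\kappa$, while the unconditional case $m>0,\ n\le 0$ of (\ref{f.49}) gives $\dim\ker A^*=0$. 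No appeal to the matrix $A_{n,m}$ is ever needed since $n=0$ throughout. The only real bookkeeping delicacy is verifying the sign/shift translation between the $\beta$-parameters in (\ref{f.p2}) and the $\delta$-parameters of Proposition \ref{p2.4}; once that is straight, both halves of the theorem follow directly from Corollary \ref{c2.2}, Theorem \ref{t3.3x}, and Theorem \ref{t3.2}.
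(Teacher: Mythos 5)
Your proposal is correct and follows essentially the same route as the paper: set $b=-t\iv a$, observe $c=-t=u_{1,1}$ with trivial antisymmetric factorization ($n=0$), compute $d=-t\,\ta/a$ to read off $\delta^+=\tfrac12-\beta^+$, $\delta^-=-\beta^-$, $\delta_r=-\beta_r^+-\beta_r^-$, $m=-\kappa$, and then invoke Corollary \ref{c2.2}, Theorem \ref{t3.3x}, and Theorem \ref{t3.2}. Your bookkeeping is accurate (indeed your $d_0=\ta_0/a_0$ is what the computation actually yields, whereas the paper prints $a_0\ta_0\iv$ -- immaterial since both lie in $\cB_0$), and the case analysis of the defect numbers with $n=0$ matches the paper's conclusion.
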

\begin{proof}
The proof of this theorem goes essentially in the same way as in the previous theorem.
We remark only the main computational steps. With $b=-t\iv a$ we obtain $c=a/b=-t$ and
$d=\ta/b=-t \ta/a$. We can write $c$ as $c=\eta_{1,1}\xi_{1,-1}=u_{1,1}$, whence $\gamma^+=1/2$ and $n=0$.
For $d$ we obtain
$$
d(t) = t^{-2\kappa} a_0(t) \tilde{a}_0\iv(t) u_{1,-2\beta^++1}(t) u_{-1,-2\beta^-}(t) \prod_{r=1}^R
u_{\tau_r,-\beta_r^+-\beta_r^-}(t) u_{\bar{\tau}_r,-\beta_r^+-\beta_r^-}(t).
$$
The only difference in comparison to the previous theorem is that $\delta^+=-\beta^++1/2$, which makes a change in condition (i). As before $m=-\kappa$.
\end{proof}

\begin{theorem} 
Let $a\in GPC_0$. Then $A=T(a)+H(t a)$ is Fredholm  on $\Hp$ if and only if $a$ can be represented
in the form
\be\label{f.p3}
a(t) =  t^{\kappa} a_0(t) u_{1,\beta^+}(t) u_{-1,\beta^-}(t)\prod_{r=1}^R
u_{\tau_r,\beta_r^+}(t) u_{\bar{\tau}_r,\beta_r^-}(t) 
\ee
with $\tau_1,\dots\tau_R\in\T_+$ being distinct, $\kappa\in\Z$, $a_0\in  G_0C(\T)$, and
\begin{enumerate}
\item[(i)] $-1/2-1/2q<\Re \beta^+<1/2p$, 
\item[(ii)] $-1/2-1/2q<\Re \beta^{-}< 1/2p$, 
\item[(iii)]  $-1/q<\Re(\beta_r^++\beta_r^-) <1/p$ for $1\le r\le R$.
\end{enumerate}
Moreover, in this case
$$
\ker A=\left\{\ba{cl}
0 & \mbox{ if }\kappa\ge0 \\ -\kappa & \mbox{ if }\kappa<0, \ea\right.\qquad
\ker A^*=\left\{\ba{cl}
\kappa & \mbox{ if } \kappa>0\\ 0 &  \mbox{ if }\kappa\le 0. \ea\right.
$$
\end{theorem}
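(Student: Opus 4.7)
The plan is to mirror the proofs of the two preceding theorems, replacing $b=a$ (resp.\ $b=-t^{-1}a$) by $b=ta$ and tracking how the extra factor of $t$ shifts the jump parameter at $\tau=-1$. Starting from an arbitrary $a\in GPC_0$, I would write it in the form (\ref{f.p3}) with provisional parameters $\beta^\pm,\beta_r^\pm,\kappa$ (which we are free to adjust by integers, absorbing integer shifts into $\kappa$ and an overall constant into $a_0$). With $b=ta$ the auxiliary functions are
$$
c=\frac{a}{b}=t^{-1},\qquad d=\frac{\tilde a}{b}=t^{-1}\frac{\tilde a}{a}.
$$

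Next I would encode $c$ and $d$ using the $u_{\tau,\beta}$-functions. Since $u_{-1,-1}(t)=(-t/(-1))^{-1}=t^{-1}$, one has $c=u_{-1,-1}$, so $c$ contributes only $\gamma^+=0,\gamma^-=-1/2$ (with no other jumps), and these values automatically satisfy the inequalities of Proposition \ref{p2.4} in the $c$-row. For $d$, using $\widetilde{u_{\tau,\beta}}=u_{\bar\tau,-\beta}$ and the representation (\ref{f.p3}) of $a$, a short calculation (analogous to those in the proofs of the previous two theorems) yields
$$
d(t)=t^{-2\kappa}\,\tilde a_0(t)a_0^{-1}(t)\,u_{1,-2\beta^+}(t)\,u_{-1,-2\beta^--1}(t)\prod_{r=1}^R u_{\tau_r,-\beta_r^+-\beta_r^-}(t)\,u_{\bar\tau_r,-\beta_r^+-\beta_r^-}(t),
$$
where the crucial shift $-2\beta^-\mapsto-2\beta^--1$ at $\tau=-1$ comes precisely from the extra factor $t^{-1}=u_{-1,-1}$. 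Reading off $\delta^+=-\beta^+$, $\delta^-=-\beta^--1/2$, $\delta_r=-\beta_r^+-\beta_r^-$ and inserting these into the $d$-inequalities of Proposition \ref{p2.4} translates exactly into conditions (i)--(iii) of the theorem. Conversely, if (i)--(iii) hold, one may adjust $\kappa$ so that $a_0\in G_0C(\mathbb{T})$, and Corollary \ref{c2.2} (equivalently Proposition \ref{p2.4}) gives Fredholmness.

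Once Fredholmness is established, I would appeal to Theorem \ref{t3.3x} to obtain antisymmetric factorizations of $c$ and $d$. For $c=u_{-1,-1}$ the factorization is explicit: using $u_{\tau,\beta}=\eta_{\tau,\beta}\xi_{\tau,-\beta}$ and $\widetilde{\eta_{\tau,\beta}}=\xi_{\bar\tau,\beta}$, one finds $c_+(t)=(1+t)^{-1}$ and $n=0$. The factorization of $d$ produced by Theorem \ref{t3.3x} carries index $m=-\kappa$. Finally, Theorem \ref{t3.2} applied with $n=0$ and $m=-\kappa$ yields the stated dimensions: in the regime $\kappa\ge 0$ one falls in the case $n=0\le 0$, $m=-\kappa\le 0$ giving $\dim\ker A=-n=0$ and $\dim\ker A^*=-m=\kappa$; for $\kappa<0$ one is in the case $n\le 0$, $m>0$, giving $\dim\ker A=m-n=-\kappa$ and $\dim\ker A^*=0$. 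Since $n=0$, the potentially troublesome case $n,m>0$ involving the matrix $A_{n,m}$ never arises, so no finite determinant must be examined.

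The only real bookkeeping step, and thus the main obstacle, is the correct determination of the jump parameters for $d$ at $\tau=\pm 1$: the $t^{-1}$ factor shifts $\delta^-$ by $-1/2$ (explaining why conditions (i) and (ii) of this theorem are interchanged relative to those of the preceding one), while $\delta^+$ is unchanged. Everything else is parallel to the two preceding proofs.
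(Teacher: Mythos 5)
Your proposal is correct and follows essentially the same route as the paper: compute $c=a/b=t^{-1}=u_{-1,-1}$ (so $\gamma^-=-1/2$, $n=0$) and $d=t^{-1}\tilde a/a$, read off $\delta^+=-\beta^+$, $\delta^-=-\beta^--1/2$, $\delta_r=-\beta_r^+-\beta_r^-$, $m=-\kappa$, and then invoke Corollary \ref{c2.2}/Proposition \ref{p2.4}, Theorem \ref{t3.3x}, and Theorem \ref{t3.2}. Your bookkeeping is in fact cleaner than the paper's own terse computation (which contains the slips ``$c=a/b=t$'' and ``$\delta^+=-\beta^+-1/2$'' where $c=t^{-1}$ and $\delta^-=-\beta^--1/2$ are meant), and you correctly observe that $n=0$ forces the case distinction in Theorem \ref{t3.2} to avoid the matrix $A_{n,m}$ entirely.
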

\begin{proof}
Here we have $b=t a$, and we obtain $c=a/b=t$ and
$d=\ta/b=t\iv \ta/a$. We can write $c$ as $c=\eta_{-1,-1}\xi_{-1,1}=u_{-1,-1}$, whence $\gamma^-=-1/2$ and $n=0$.
For $d$ we obtain
$$
d(t) = t^{-2\kappa} a_0(t) \tilde{a}_0\iv(t) u_{1,-2\beta^+}(t) u_{-1,-2\beta^--1}(t) \prod_{r=1}^R
u_{\tau_r,-\beta_r^+-\beta_r^-}(t) u_{\bar{\tau}_r,-\beta_r^+-\beta_r^-}(t).
$$
Now $\delta^+=-\beta^+-1/2$, which causes a change in  (ii).
\end{proof}


\subsection{The case of operators $\boldsymbol{I+H(\phi)}$}

We want to derive Fredholm criteria and formulas for the index in the case of operators
$I+H(\phi)$. In order to apply our assumptions we must assume that $\phi\tilde{\phi}=1$, which derives from the condition $a\ta=b\tb$.

For notational convenience we will consider the operator $I+H(\tilde{\phi})$ instead of
$I+H(\phi)$, i.e., we will replace $\phi$ by $\tilde{\phi}=\phi\iv$. Moreover, for simplicity we will assume that $\phi\in GPC_0$. In this case, it is possible to write $\phi$ in the form
$$
\phi=a_1\cdot u_{1,\beta^+}\cdot u_{-1,\beta^-}\prod_{r=1}^R u_{\tau_r,\beta_r}\cdot u_{\bt_r,\beta_r}.
$$
Here $a_1$ is continuous and nonvanishing on $\T$, and $a_1\ta_1=1$. Notice that the size of the jump at $\tau_r$ and $\bt_r$ is the same. This representation is not unique. We can replace $\beta_r$ by $\beta_r+n_r$. This leads to an additional factor $t^{2n}$, which we can combine with $a_1$.
Next we would like to guarantee that $a_1(\pm1)=1$. However, since $a_1\ta_1=1$,  we only have $a_1(1)=\pm1$ and $a_1(-1)=\pm1$.
In order to correct this we multiply $a_1$ with either $u_{1,1}(t)=-t$ or $u_{-1,1}(t)=t$ or both, and
decrease the value of $\beta^+$ or $\beta^-$ by one (see also (\ref{u.p})). In this way, we achieve the conditions $a_1(\pm 1)=1$. Now we can modify $\beta^+$ and $\beta^-$ only by integer multiples of $2$.
Since  the continuous function $a_1$ satisfies $a_1\ta_1=1$ and $a_1(\pm1)=1$, it is easily seen that $a_1$ has an even winding number. Hence we can write $a_1=t^{2\kappa}a_0$. This leads us to the following representation
\bq\label{f.39}
\phi=t^{2\kappa} \cdot a_0\cdot u_{1,\beta^+}\cdot u_{-1,\beta^-}\prod_{r=1}^R u_{\tau_r,\beta_r}\cdot u_{\bt_r,\beta_r}.
\eq
where $a_0\in G_0C(\T)$ satisfies $a_0\ta_0=1$, $a_0(\pm1)=1$.
To conclude, we remark that the representation is not unique, but that we can make the following replacements,
\be\label{f.40}
\beta^{\pm}\mapsto\beta^{\pm}+2n^{\pm},\quad \beta_r\mapsto \beta_r+n_r,\quad
\kappa\mapsto 
\kappa-n^+-n^--n_1-\dots- n_R,
\ee
while $a_0$ stays the same. The above conditions on $a_0$ mean that $a_0\in \cB_0$ (in the notation of Sect.~\ref{s3}), and due to Lemma \ref{l.1} (and the remark made afterwards)
there exists a factorization
\bq\label{a0-f}
a_0= a_{0,+}\cdot (\ta_{0,+})\iv
\eq
with $a_{0,+},a_{0,+}\iv\in \bigcap\limits_{\sigma>0}H^{\sigma}(\T)$.

In order to analyze the Fredholmness of $I+H(\tilde{\phi})$, we note that $a=1$ and $b=\phi\iv$. Hence the corresponding auxiliary functions evaluate to  $c=d=\phi$. The connection between the parameters is $\gamma_r=\delta_r=\beta_r$, $\gamma^+=\delta^+=\beta^+/2$, and
$\gamma^-=\delta^-=\beta^-/2$ modulo the integers. Using Corollary \ref{c2.2} we arrive at the conclusion that $I+H(\tilde{\phi})$ is Fredholm on $\Hp$ if and only if there exist a representation (\ref{f.40}) such that the following
conditions are fulfilled:
\begin{enumerate}
\item[(i)]
$\Re\beta^+\notin\left\{-\frac{1}{p},-\frac{1}{q}\right\}+ 2\,\Z$,
\item[(ii)]
$\Re\beta^-\notin\left\{\frac{1}{p},\frac{1}{q}\right\}+2\,\Z$,
\item[(iii)]
$\Re\beta_r\notin\left\{\frac{1}{p},\frac{1}{q}\right\}+\Z$ for each $1\le r \le R$.
\end{enumerate}
Notice that the conditions arise as combinations between the $\gamma$ and $\delta$ parameter conditions (see also Proposition \ref{p2.4}). 

Assuming that (i)--(iii) are satisfied, in the case $p=2$ we have ``canonical'' choices for the $\beta$ parameters by stipulating that $\beta^\pm$ belong to an interval of length 2 and the $\beta_r$ belong to an interval of length $1$. This canonical choice breaks down in the case $p\neq q$, where we have two choices (which may or may not coincide).

\begin{theorem}
Let $\phi\in GPC_0$ such that $\phi\tilde{\phi}=1$. Then 
$I+H(\tilde{\phi})$ is Fredholm on $\Hp$ if and only if $\phi$ can be written in the two forms 
\bq\label{f.r1}
\phi &=& t^{2n} \cdot a_{0}\cdot u_{1,2\gamma^+}\cdot u_{-1,2\gamma^-}\prod_{r=1}^R u_{\tau_r,\gamma_r}\cdot u_{\bt_r,\gamma_r}\\
\phi &=& t^{2m} \cdot a_{0}\cdot u_{1,2\delta^+}\cdot u_{-1,2\delta^+}\prod_{r=1}^R u_{\tau_r,\delta_r}\cdot u_{\bt_r,\delta_r}
\label{f.r2}
\eq
such that $n,m\in\Z$, $a_{0}\in G_0 C(\T)$,  $a_{0}\ta_{0}=1$, $a_{0}(\pm1)=1$, and
\begin{enumerate}
\item[(i)] $-1/2q<\Re\gamma^+<1/2+1/2p$, $-1/2p<\Re \delta^+<1/2+1/2q$, 
\item[(ii)] $-1/2-1/2q<\Re\gamma^-<1/2p$, $-1/2-1/2p<\Re \delta^-<1/2q$, 
\item[(iii)] $-1/q<\Re\gamma_r<1/p$, $-1/p<\Re \delta_r<1/q$, 
\end{enumerate}
In case of Fredholmness, the index equals $m-n$, and the defect numbers are 
\bq
\dim\ker(I+H(\tilde{\phi})) &=&\left\{
\begin{array}{ll} 0 & \mbox{ if }n > 0, \ m\le 0  \\
-n & \mbox{ if } n\le 0, \ m\le 0\\
\dim \ker A_{n,m} \hspace*{2.6ex}& \mbox{ if } n>0 , \ m>0\\
m -n & \mbox { if } n\le 0,\ m>0,
\end{array}\right.
\\[3ex]
\dim\ker (I+H(\tilde{\phi}))^* &=& \left\{
\begin{array}{ll}  0 & \mbox{ if }m >  0, \ n\le 0\\
 -m & \mbox{ if } m\le  0, \ n\le 0\\
 \dim \ker (A_{n,m})^T & \mbox{ if } m>0, \ n>0\\
 n-m & \mbox { if } m\le 0,\ n>0.
\end{array}\right.
\eq
Therein, in case $n>0$, $m>0$, we have
$$
 A_{n,m}:=
\left[ \rho_{i-j}+\rho_{i+j}\right]_{i=0}^{n-1}\ _{j=0}^{m-1}.
$$
and 
\bq
\rho &=& t^{-m+n} a_{0,+}\ta_{0,+}
\eta_{1,2\gamma^+}\xi_{1,2\delta^+}\eta_{-1,2\gamma^-+1}\xi_{-1,2\delta^-+1}
\prod_{r=1}^R 
\eta_{\tau,\gamma_r}\xi_{\tau,\delta_r} \eta_{\bt,\gamma_r}\xi_{\bt,\delta_r}
\nn\\
&=&  t^{m-n} a_{0,+}\ta_{0,+}
\eta_{1,2\delta^+}\xi_{1,2\gamma^+}\eta_{-1,2\delta^-+1}\xi_{-1,2\gamma^-+1}
\prod_{r=1}^R 
\eta_{\tau,\delta_r}\xi_{\tau,\gamma_r} \eta_{\bt,\delta_r}\xi_{\bt,\gamma_r}\nn
\eq
where $a_0=a_{0,+}\cdot(\ta_{0,+})\iv$ is a factorization (\ref{a0-f}).
\end{theorem}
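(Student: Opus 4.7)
The strategy is to specialize the general theory to $a=1$, $b=\tilde\phi=\phi\iv$, for which $a\ta=b\tb$ becomes $\phi\tilde\phi=1$ and the auxiliary functions collapse to $c=d=\phi$. All four claims (Fredholm criterion, existence of the two representations, index formula, and the defect-number formulas with both expressions for $\rho$) will then be obtained by direct application of Corollary \ref{c2.2}, Theorem \ref{t3.3x} and Theorem \ref{t3.2}, the only computational content being the algebraic simplification of the function $\rho$ from Theorem \ref{t3.2}.

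First I would establish the Fredholm equivalence together with the representations (\ref{f.r1}) and (\ref{f.r2}). Starting from $\phi\in GPC_0$, the preliminary discussion that precedes the theorem supplies the representation (\ref{f.39}) with a uniquely determined continuous factor $a_0\in G_0C(\T)$ satisfying $a_0\tilde a_0=1$ and $a_0(\pm1)=1$, together with an integer $\kappa$ and complex parameters $\beta^\pm,\beta_r$ that are determined only up to the integer shifts (\ref{f.40}). Since $c=d=\phi$, applying Corollary \ref{c2.2} translates Fredholmness of $I+H(\tilde\phi)$ into conditions (\ref{f.c10})--(\ref{f.c12}) on the one-sided limits of $\phi$. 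Under these conditions one can (independently) choose the integer shifts in (\ref{f.40}) so as to land in either the $\gamma$-intervals from Proposition \ref{p2.4}(i)--(iii), producing (\ref{f.r1}), or in the $\delta$-intervals, producing (\ref{f.r2}), with integers $n$ and $m$ respectively; the common factor $a_0$ is preserved because the remark after (\ref{f.40}) ensures it is insensitive to integer shifts of the jump parameters. The converse direction is immediate from the jump identities (\ref{u.pm}) inserted into Corollary \ref{c2.2}.

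Next I would obtain explicit factorizations of $c$ and $d$ of the form required in Section \ref{s4}. Following the construction in the proof of Theorem \ref{t3.3x} and using the factorization $a_0=a_{0,+}(\tilde a_{0,+})\iv$ supplied by Lemma \ref{l.1}, the two factors come out as
\[
c_+ \;=\; a_{0,+}\cdot\eta_{1,2\gamma^+}\eta_{-1,2\gamma^-}\prod_{r=1}^R\eta_{\tau_r,\gamma_r}\eta_{\bar\tau_r,\gamma_r},
\]
and analogously for $d_+$ with $\delta$ in place of $\gamma$; the factorizations (\ref{f.f1})--(\ref{f.f3}) hold with the indices $n,m$ identified above. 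The basic factorization conditions of Section \ref{s4} are therefore satisfied, and Theorem \ref{t3.2} immediately delivers the stated defect numbers (case by case) together with the Fredholm index $m-n$.

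The remaining, and essentially only computational, step is the simplification of
\[
\rho \;=\; t^{-m-n}(1+t)(1+t\iv)\,\tilde c_+\tilde d_+\,b\iv
\]
from Theorem \ref{t3.2}. The key identity is $\phi\tilde c_+=t^{2n}c_+$, which follows by rearranging the factorization $c=\phi=c_+t^{2n}\tilde c_+\iv$. Since $b\iv=\phi$, this turns $\rho$ into $t^{n-m}(1+t)(1+t\iv)c_+\tilde d_+$. Writing $(1+t)(1+t\iv)=\eta_{-1,1}\xi_{-1,1}$, applying $\tilde\eta_{\tau,\beta}=\xi_{\bar\tau,\beta}$ to pass from $d_+$ to $\tilde d_+$, and then collapsing the resulting products by $\eta_{\tau,\alpha}\eta_{\tau,\beta}=\eta_{\tau,\alpha+\beta}$ and $\xi_{\tau,\alpha}\xi_{\tau,\beta}=\xi_{\tau,\alpha+\beta}$, yields the first displayed formula for $\rho$. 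Using instead the dual identity $\phi\tilde d_+=t^{2m}d_+$ to absorb $\phi$ into $\tilde d_+$ gives $\rho=t^{m-n}(1+t)(1+t\iv)\tilde c_+ d_+$, and the same bookkeeping produces the second displayed formula; the two forms are automatically equal because both compute the same function $\rho$.

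The hardest part is not conceptual but notational: keeping careful track of the $\eta$/$\xi$ identities and of which integer shifts in (\ref{f.40}) are absorbed into $n$ versus $m$ so that the same $a_0$ appears in both representations. All other steps are direct specializations of the results of Sections \ref{s2}--\ref{s4}.
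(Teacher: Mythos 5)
Your proposal is correct and follows essentially the same route as the paper: specialize to $a=1$, $b=\tilde\phi$ so that $c=d=\phi$, get the two representations from the pre-theorem discussion of (\ref{f.39})--(\ref{f.40}) combined with Corollary \ref{c2.2}, read off the factorizations $c_+$, $d_+$ via Theorem \ref{t3.3x} and Lemma \ref{l.1}, and apply Theorem \ref{t3.2}; your simplification of $\rho$ using $\phi\tilde c_+=t^{2n}c_+$ (equivalently $\phi\tilde d_+=t^{2m}d_+$) and the $\eta$/$\xi$ identities is exactly the computation the paper performs.
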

\begin{proof}
With the auxiliary functions $c=d=\phi\iv=\tilde{\phi}$ it is easy to see that the existence of the representations implies the Fredholmness. Conversely, we have already argued above that the Fredholmness implies two (in generel, different) representations with parameters which must be connected by (\ref{f.40}); thus the $a_0$ is the same in both representation. The representation (\ref{f.r1}) gives rise to the representation corresponding to $c$ and the representation (\ref{f.r2}) gives rise to the respresentation corresponding to $d$ (see also Proposition \ref{p2.4}). 

 This implies the index formula immediately. As for the defect numbers we rely on  Theorem \ref{t3.2} and Theorem \ref{t3.3x} for the factorization.
For $c=d=\phi$ we obtain the factorizations with $c_+$ and $d_+$ given by
\bq
c_{+} &=& a_{0,+}\cdot \eta_{1,2\gamma^+}\cdot \eta_{-1,2\gamma^-}\prod_{r=1}^R\eta_{\tau_r,\gamma_r}\cdot
\eta_{\bt_r,\gamma_r},
\nn\\
d_+ &=& a_{0,+}\cdot \eta_{1,2\delta^+}\cdot \eta_{-1,2\delta^-}\prod_{r=1}^R\eta_{\tau_r,\delta_r}\cdot
\eta_{\bt_r,\delta_r},
\nn
\eq
whence it follows that $\rho=t^{-m-n}(1+t)(1+t\iv)\tc_+\td_+b\iv$ with $b=\phi\iv=c\iv$. Hence
$$
\rho=t^{-m-n}(1+t)(1+t\iv)\tc_+\td_+c=t^{-m+n}(1+t)(1+t\iv)\td_+c_+\nn
$$
and
$$
\rho=t^{-m+n} a_{0,+}\ta_{0,+}
\eta_{1,2\gamma^+}\xi_{1,2\delta^+}\eta_{-1,2\gamma^-+1}\xi_{-1,2\delta^-+1}
\prod_{r=1}^R 
\eta_{\tau,\gamma_r}\xi_{\tau,\delta_r} \eta_{\bt,\gamma_r}\xi_{\bt,\delta_r}.
$$
For the second, equivalent, representation we exchange the roles of $c_+$ and $d_+$.
\end{proof}

A couple of remarks are in order to clarify the statement of the theorem.
Under the assumptions of the theorem,
$$
\gamma_r-\delta_{r} = \left\{
\ba{lc} 
0 \mbox{ or } 1 & \mbox{ if } p<2\\
 0 \mbox{ or } -1 & \mbox{ if } p>2\\
0 & \mbox{ if } p=2
\ea\right.
$$
and the same for $\gamma^\pm$ and $\delta^\pm$. This follows from (\ref{f.40}) and the ranges of the
$\gamma$ and $\delta$ parameters given in (i)--(iii).
Moreover, again by (\ref{f.40}),
$$
n-m=(\gamma^+-\delta^+) + (\gamma^--\delta^-) + \sum_{r=1}^R (\gamma_r-\delta_r). 
$$
The obvious conclusion is that the Fredholm index is always $\ge0$ for $p\le 2$ and  always $\le 0$
for $p\ge2$. This is not too surprising. It more easily follows from the fact that $I+H(\phi)$ is symmetric and that $H^p(\T)\subseteq H^q(\T)$ if $p\ge q$.

Another point is that $\rho$ seems to have two different representations. (Of course, they are the same if the $\gamma$ and $\delta$ parameters coincide.) This suggest that there ought to be another more ``symmetric'' representation. Indeed, introduce the functions
\be\label{f.zero}
v_{\tau_0,\alpha}(e^{i\theta})=(2-2\cos(\theta-\theta_0))^\alpha
\ee
with  $\tau_0=e^{i\theta_0}$, and $\Re\alpha>-1/2$.
Using 
$$
\eta_{\tau,\gamma}\delta_{\tau,\delta}=v_{\tau,(\gamma+\delta)/2} u_{\tau,(\gamma-\delta)/2}
$$
we can write $\rho=\rho_0\rho_1$
with 
\bq
\rho_0 &=& a_{+,0}\ta_{+,0} v_{1,\gamma^++\delta^+}v_{-1,\gamma^-+\delta^-+1}
\prod_{r=1}^R v_{\tau_r,(\gamma_r+\delta_r)/2}v_{\bt_r,(\gamma_r+\delta_r)/2}
\\
\rho_1 &=& t^{-m+n} u_{1,n_+}u_{-1,n_-}\prod_{r=1}^R u_{\tau_r,n_r/2}u_{\bt_r,n_r/2}
\ = \ (-1)^{n_+} \prod_{r=1}^R (\chi_{\tau_r})^{n_r}
\eq
with $n_\pm=\gamma^\pm-\delta^\pm$, $n_r=\gamma_r-\delta_r$, and $\tau_r=e^{i\theta_r}$, $\theta_r\in(0,\pi)$, and
$$
\chi_{\tau_r}(e^{ix}) =\left\{
\ba{rl} 1 & \mbox{ if } -\theta_r<x<\theta_r\\
-1 & \mbox{ if }\theta_r<x<2\pi-\theta_r.
\ea\right.
$$
In this representation of $\rho$, it is immediatley clear that both $\rho_0$ and $\rho_1$ are invariant under exchange of $\gamma$ and $\delta$
parameters. Notice that both $\rho_0$ and $\rho_1$ are even functions. 
It is remarkable that $\rho_0$ contains functions (\ref{f.zero}) with zeros or poles only, while
$\rho_1$ is a very particular jump function.

After these remarks, let us state the invertibility criteria whose proof is straightforward.

\begin{corollary}
Let $\phi\in G PC_0$, and assume that $\phi\tilde{\phi}=1$. Then 
$I+H(\tilde{\phi})$ is invertible on $\Hp$ if and only if $\phi$ can be written in the form 
$$
\phi=t^{2\kappa} \cdot a_0\cdot u_{1,\beta^+}\cdot u_{-1,\beta^-}\prod_{r=1}^R u_{\tau_r,\beta_r}\cdot u_{\bt_r,\beta_r}
$$
such that $\kappa\in\Z$, $a_0\in G_0 C(\T)$, $a_0\ta_0=1$, $a_0(\pm1)=1$, and
\begin{enumerate}
\item[(i)]
$-\min\{\frac{1}{p},\frac{1}{q}\}<\Re \beta^+<\min\{\frac{1}{p},\frac{1}{q}\}+1$,
\item[(ii)]
$-1-\min\{\frac{1}{p},\frac{1}{q}\}<\Re \beta^-<\min\{\frac{1}{p},\frac{1}{q}\}$,
\item[(iii)]
$-\min\{\frac{1}{p},\frac{1}{q}\}<\Re \beta_r<\min\{\frac{1}{p},\frac{1}{q}\}$ for each $1\le r\le R$,
\end{enumerate}
and if either $\kappa=0$ or $\kappa>0$ and $A_{\kappa,\kappa}$ is non-singular,
where $A_{\kappa,\kappa}=[\rho_{i-j}+\rho_{i+j}]_{i,j=0}^{\kappa-1}$, $a_0=a_{0,+}\cdot (\ta_{0,+})\iv$  is a factorization (\ref{a0-f}), and
$$
\rho=a_{0,+} \cdot \ta_{0,+} \cdot v_{1,\beta^+}\cdot v_{-1,\beta^-+1}\prod_{r=1}^R
v_{\tau_r,\beta_r} \cdot v_{\bt_r,\beta_r}.
$$
\end{corollary}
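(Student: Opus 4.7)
The plan is to obtain the corollary by intersecting the two Fredholmness representations of the previous theorem and extracting the condition under which both defect numbers vanish. Since invertibility is equivalent to being Fredholm with $\dim\ker=\dim\ker^*=0$, I will first invoke the Fredholmness theorem to produce representations (\ref{f.r1}) and (\ref{f.r2}) with parameters $\gamma$ and $\delta$ satisfying the ranges (i)--(iii) of that theorem, together with indices $n,m\in\Z$.

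Next I will determine exactly when the defect numbers vanish. Since the Fredholm index is $m-n$, invertibility forces $n=m$. Running through the four cases in the theorem: if $n=m\le 0$, then $\dim\ker=-n$ (resp.\ $\dim\ker^*=-m$) vanishes only when $n=m=0$; if $n=m>0$, then $\dim\ker=\dim\ker A_{n,n}$ and $\dim\ker^*=\dim\ker A_{n,n}^{T}$, which for a square matrix both amount to invertibility of $A_{n,n}$. The mixed cases ($n\le0,m>0$ or $n>0,m\le0$) give nonzero index and so cannot occur. Thus the two alternatives in the corollary arise exactly from the two surviving cases.

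Now when $n=m$, the $\gamma$ and $\delta$ representations can be consolidated into a single one by choosing $\gamma_r=\delta_r=:\beta_r$ and $2\gamma^\pm=2\delta^\pm=:\beta^\pm$, using the freedom (\ref{f.40}). The corollary's conditions (i)--(iii) are then obtained by intersecting the $\gamma$- and $\delta$-ranges of the theorem. For $p\le 2$ (so $\min\{1/p,1/q\}=1/q$) this intersection yields precisely $-1/q<\Re\beta^+<1+1/q$, $-1-1/q<\Re\beta^-<1/q$, $-1/q<\Re\beta_r<1/q$; the case $p\ge2$ gives the analogous bounds with $1/q$ replaced by $1/p$, and both are captured uniformly by the $\min\{1/p,1/q\}$ in the statement.

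Finally, the formula for $\rho$ follows by substituting $\gamma=\delta$ into either of the two expressions for $\rho$ in the theorem and using the identity
\[
\eta_{\tau,\alpha}(t)\,\xi_{\tau,\alpha}(t)=\bigl((1-t/\tau)(1-\tau/t)\bigr)^{\alpha}=v_{\tau,\alpha}(t),
\]
which collapses each pair of $\eta$ and $\xi$ factors with equal exponents into a single $v_{\tau,\alpha}$. The prefactor $t^{-m+n}$ becomes $1$, and one obtains exactly the $\rho$ listed in the corollary. No step here is genuinely hard; the only point that requires some care is verifying that when both $\dim\ker A_{n,m}$ and $\dim\ker A_{n,m}^{T}$ vanish the matrix must be square and invertible, and that the parameter consolidation is compatible with (\ref{f.40}), but both are routine.
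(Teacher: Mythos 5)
Your argument is correct and follows essentially the route the paper intends (the paper itself only calls the proof ``straightforward'', relying on the observations stated immediately before the corollary: the sign-consistency of the parameter differences and the symmetric $v$-representation of $\rho$). The one point worth making explicit is that when $n=m$ the identification $\gamma=\delta$ is forced rather than merely available: by (\ref{f.40}) the differences $\gamma^{\pm}-\delta^{\pm}$ and $\gamma_r-\delta_r$ are integers which, by the ranges (i)--(iii) of the theorem, all lie in $\{0,1\}$ for $p<2$ and all in $\{0,-1\}$ for $p>2$, and since they sum to $n-m=0$ they must all vanish.
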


\begin{example}\em
Let us consider the special case of $I+H(\tilde{\phi})$ with 
$$
\phi=t^{2\kappa} u_{1,\beta^+}u_{-1,\beta^-}.
$$
First notice that this representation is not unique, but that we can add to $\beta^\pm$ integer multiples of
$2$ at the expense of modifying $\kappa\in\Z$. Due to the previous result, if $I+H(\tilde{\phi})$ is invertible, then we must be able to choose $\beta^+$ and $\beta^-$ fulfill the conditions (i) and (ii).
Notice that for $p=2$, this is a requirement that $\Re \beta^\pm$ lie in an open interval of length 2,
while for $p\neq2$, the interval is shorter.

Now assume that $\beta^\pm$ are chosen in this way, which fixes $\kappa$ uniquely.
For $\kappa=0$ we have invertibiliy, while for $\kappa<0$ we have not.
In case $\kappa>0$ we are led to $A_{\kappa,\kappa}$
with $\rho=v_{1,\beta^+} v_{-1,\beta^-+1}$. The matrix $A_{\kappa,\kappa}$
is a particular Toeplitz+Hankel matrix which arises from the finite section of $T(\rho)+H(t\rho)$.
In Theorem 6 (Case (4)) of \cite{BCE} it has been shown that 
$$
\det P_{\kappa} (T(\rho)+H(t\rho))P_{\kappa} = 4 \det 
\left(\frac{1}{\pi}\int_{-1}^1 \sigma(x)(2x)^{j+k}\, dx\right)_{j,k=0}^{\kappa-1}
$$
with 
$$
\sigma(\cos \theta)=\rho(e^{i\theta})(2+2\cos(\theta))^{-1/2}(2-2\cos(\theta))^{-1/2}.
$$
Hence
$$
\sigma(x) = (2-2x)^{\beta^+-1/2}(2+2x)^{\beta^-+1/2}
$$
noting that $\alpha:=\beta^+-1/2$, $\beta:=\beta^-+1/2$, yields $\Re\alpha>-1$ and $\Re\beta>-1$. The Hankel determinant above can be expressed in terms of the leading coefficients $\sigma_n$ of the (normalized) Jacobi polynomials,
$$
(\sigma_n)^2= \left( 2^{-n}{2n+\alpha+\beta \choose n }\right)^2
\frac{2n+\alpha+\beta+1}{2^{\alpha+\beta+1}}\frac{\Gamma(n+1)\Gamma(n+\alpha+\beta+1)}{\Gamma(n+\alpha+1)\Gamma(n+\beta+1)}.
$$
Thus the determinant evaluates to
$$
\frac{4\cdot 2^{\kappa(\kappa-1)}}{\pi^\kappa} \prod_{n=0}^{\kappa-1} (\sigma_n)^{-2}.
$$
This values is non-zero and thus $A_{\kappa,\kappa}$ is non-singular.
It follows that $I+H(\tilde{\phi})$ is invertible for $\kappa\ge0$.
\qed
\end{example}

This result generalizes results established in   \cite[Sect.~3.2]{BE5}, \cite[Sect.~4.1]{E3}, \cite[Thm.~3.2]{E4}, and \cite[Sect.~2.2]{E5}.



\end{document}